\newtheorem{theorem}{Theorem}[section]
\newtheorem{lemma}[theorem]{Lemma}
\newtheorem{proposition}[theorem]{Proposition}
\newtheorem{problem}[theorem]{Problem}
\newtheorem{corollary}[theorem]{Corollary}
\theoremstyle{definition}
\newtheorem{definition}[theorem]{Definition}
\theoremstyle{remark}
\newtheorem{remark}[theorem]{Remark}
\newtheorem{notation}[theorem]{Notation}
\numberwithin{equation}{section}
\newcommand{\N}{\mathbb{N}}
\newcommand{\R}{\mathbb{R}}
\newcommand{\bB}{\mathcal{B}}
\newcommand{\lL}{\mathcal{L}}
\newcommand{\sS}{\mathcal{S}}
\newcommand{\xX}{\mathcal{X}}
\DeclareMathOperator{\supp}{supp}
\DeclareMathOperator{\range}{range}
\DeclareMathOperator{\spn}{span}
\DeclareMathOperator{\ord}{ord}
\DeclareMathOperator{\clspn}{\overline{\mathrm{span}}}
\newcommand{\norm}[1]{\left\lVert#1\right\rVert}
\newcommand\restrict[1]{\raisebox{-.5ex}{$|$}_{#1}}
\author[A. Pelczar-Barwacz]{Anna Pelczar-Barwacz}
\address{Faculty of Mathematics and Computer Science, Jagiellonian University, {\L}ojasiewicza 6, 30-348 Krak\'ow, Poland}
\email{anna.pelczar@uj.edu.pl}
\author[Z. Silber]{Zden\v{e}k Silber}
\address{Institute of Mathematics of the Czech Academy of Sciences,
Žitná 25, 115 67 Prague 1, Czech Republic}
\email{zdesil@seznam.cz}
\author[T. Wawrzycki]{Tomasz Wawrzycki}
\address{Faculty of Mathematics and Computer Science, Jagiellonian University, {\L}ojasiewicza 6, 30-348 Krak\'ow, Poland}
\email{tomasz.wawrzycki@student.uj.edu.pl}
\keywords{Intrinsic Baire classes, universal Banach space, Baire order}
\subjclass[2020]{46B03, 46B10, 46B20, 54H05}
\thanks{The research of the second named author was supported by the project L100192501 funded by the "Programme to support prospective human resources – post Ph.D. candidates" of the Czech Academy of Sciences and RVO 67985840. The research of the third named author was funded by the program Excellence Initiative – Research University at the Jagiellonian
University in Kraków. The research presented in this paper was partially conducted during the authors’ visit at the Erwin Schr\"odinger International Institute for Mathematics and Physics
in Vienna, participating in the workshop \textit{Structures in Banach spaces}.}
\begin{document}

\title{Banach spaces with arbitrary finite Baire order}

\begin{abstract}
    We investigate intrinsic Baire classes of Banach spaces defined by Argyros, Godefroy and Rosenthal in \cite{ArgyrosGodefroyRosenthal2003}. We introduce a construction, for any Banach space $X$ with a basis, of an $\ell_1$-saturated separable Banach space $Y$ such that for any $\alpha \leqslant \omega_1$ we have $Y^{**}_{1+\alpha} \cong Y \oplus X^{**}_\alpha$, where $X^{**}_\alpha$ denotes the $\alpha$-th intrinsic Baire class of $X$. We apply this construction to answer two open problems from \cite{ArgyrosGodefroyRosenthal2003},  namely we build separable Banach spaces of any Baire order less or equal to $\omega$, and a non-universal separable Banach space of order $\omega_1$. Finally, we apply the construction to show an analogue of a result of Lindenstrauss \cite{Lindenstrauss1971} by constructing, for any Banach space $X$ with a basis and any $n \in \N$, a Banach space $Y$ such that $Y^{**}_n \cong Y^{**}_{n-1} \oplus X$, showing that any such $X$ can appear as the space of functionals in a bidual Banach space $Y^{**}$ that are of $n$-th intrinsic Baire class but not of $(n-1)$-th intrinsic Baire class.
\end{abstract}

\maketitle

\section{Introduction}

Let $X$ be a Banach space. Following Argyros, Godefroy and Rosenthal \cite{ArgyrosGodefroyRosenthal2003}, we define the \textbf{Baire classes of $X$} in the following recursive way: we set $X^{**}_0 = X \subset X^{**}$, and for $\alpha \leqslant \omega_1$ we set $X^{**}_\alpha$ to be the set of all limits of weak* convergent sequences from $\bigcup_{\beta < \alpha}X^{**}_\beta$.

When $X$ is separable (which is the case we focus on in this paper), the dual ball $K = (B_{X^*},w^*)$ is a metrizable compact space. We define, for $\alpha \leqslant \omega_1$, the sets $X^{**}_{\bB_\alpha} = \{x^{**} \in X^{**}: x^{**}|_K \text{ is a Baire-$\alpha$ function}\}$. It is important to distinguish between $X^{**}_\alpha$ and $X^{**}_{\bB_\alpha}$ -- while clearly any element of $X^{**}_\alpha$ is a Baire-$\alpha$ function on $K$, and thus an element of $X^{**}_{\bB_\alpha}$, the opposite inclusion is not valid in general, as demonstrated by Talagrand \cite{Talagrand1984}. However, for $\alpha \in \{0,1\}$ we have the equality $X^{**}_\alpha = X^{**}_{\bB_\alpha}$ by the Banach-Dieudonn\'e theorem for $\alpha = 0$ and by the results of Choquet and Mokobodski (see \cite[Theorem II.1.2(a)]{ArgyrosGodefroyRosenthal2003}) for $\alpha = 1$. In particular, $X^{**}_0$ and $X^{**}_1$ are norm closed in $X^{**}$. We would like to note that in \cite[page 1047]{ArgyrosGodefroyRosenthal2003} it is claimed that $X^{**}_\alpha = X^{**}_{\bB_\alpha}$ for every $\alpha \leqslant \omega_1$ in the case when $X$ is an $\lL_\infty$ space. The argument, however, contains a gap and the result is not true as shown in \cite[Theorem 1.2]{Spurny2010}. Note that in \cite{ArgyrosGodefroyRosenthal2003} the classes $(X_\alpha^{**})_{\alpha \leqslant \omega_1}$ are called the intrinsic Baire classes, but as we will not consider the classes $(X_{\bB_\alpha}^{**})_{\alpha \leqslant \omega_1}$ further in the paper, we use the name Baire classes of $X$ for $(X_\alpha^{**})_{\alpha \leqslant \omega_1}$ for the sake of brevity as it will cause no confusion.

Recall that a Banach space $X$ is \textbf{weakly sequentially complete} if every weak Cauchy sequence in $X$ (that is a sequence $(x_n)_{n=1}^\infty\subset  X$ with the scalar sequence $ (x^*(x_n))_{n-1}^\infty$ convergent for any $x^* \in X^*$) is weakly convergent. 
It follows from the uniform boundedness principle that a sequence in $X$ is weak Cauchy if and only if it is weak* convergent in $X^{**}$, and is weakly convergent if and only if its weak* limit is an element of $X$. Hence, $X$ is weakly sequentially complete if and only if it is weak* sequentially closed in $X^{**}$, or in our notation, if and only if $X^{**}_0 = X^{**}_1$. By the Odell-Rosenthal theorem \cite{OdellRosenthal1975}, a separable Banach space $X$ does not contain $\ell_1$ if and only if every element of the bidual $X^{**}$ can be attained as a weak* limit of a sequence from $X$, or in our notation, if and only if $X^{**}_1 = X^{**}$. In the general setting (of not necessarily separable Banach spaces) the Odell-Rosenthal theorem yields the following result: 
a Banach space $X$ not containing $\ell_1$ satisfies $X_1^{**}=X_{\omega_1}^{**}$. 
Indeed, for any $x^{**} \in X_{\omega_1}^{**}$, there is a countable subset $C$ of $X$ such that $x^{**} \in \overline{C}^{w^*}$. Taking $Y$ to be the closed linear span of $C$ and canonically identifying $Y^{**}$ with $\overline{Y}^{w^*} \subset X^{**}$, we obtain $x^{**} \in Y^{**}$. As $Y$ is separable, by the Odell-Rosenthal theorem  $x^{**} \in Y_1^{**} \subset X_1^{**}$.

Let us now illustrate the possible structure of Baire classes  on examples:
\begin{enumerate}[(i)]
    \item If $X$ is weakly sequentially complete, then $X^{**}_\alpha = X$ for any $\alpha \leqslant \omega_1$;
    \item If $X$ is not reflexive  separable
    and does not contain $\ell_1$ (e.g. $X = c_0$), then $X^{**}_0 \subsetneq X^{**}_\alpha = X^{**}$ for any $1 \leqslant \alpha \leqslant \omega_1$. 
    \item If $X = \mathcal{C}([0,1])$, then $X^{**}_\alpha \subsetneq X^{**}_\beta$ for any $0 \leqslant \alpha < \beta \leqslant \omega_1$.
\end{enumerate}
Indeed, the first two points were explained above and the third point follows as for $\alpha \leqslant \omega_1$ the space $\mathcal{C}([0,1])^{**}_\alpha$ can be identified with the space $\bB_\alpha([0,1])$ of bounded Baire-$\alpha$ functions on $[0,1]$ by the dominated convergence theorem, and by a well known result from descriptive set theory,  $\bB_\alpha([0,1])$, $\alpha \leqslant \omega_1$, form a strictly increasing chain. This motivates the following definition that will be the core of our paper:

\begin{definition}
    Let $X$ be a Banach space and $\alpha \leqslant \omega_1$. We say that $X$ has the \textbf{Baire order $\alpha$}, and write $\ord_\bB(X) = \alpha$, if $\alpha$ is the smallest ordinal such that $X^{**}_\alpha = X^{**}_{\alpha+1}$.
\end{definition}
Note here that 
any Banach space is of the Baire order $\alpha$, for some $\alpha\leqslant\omega_1$. 
The examples above provide separable Banach spaces of orders $0,1$ and $\omega_1$. It was unknown whether these three values are the only possible orders of separable Banach spaces. In \cite{ArgyrosGodefroyRosenthal2003}, Argyros, Godefroy and Rosenthal asked the following:

\begin{problem}[\protect{\cite[Problem 5]{ArgyrosGodefroyRosenthal2003}}] \label{Problem-NontrivialOrder}
    Let $\alpha \leqslant \omega_1$, $\alpha \notin \{0,1,\omega_1\}$. Does there exist a separable Banach space of the Baire order $\alpha$?
\end{problem}

We give a positive answer for any $\alpha \leqslant \omega$ in Theorem \ref{Theorem-MainOrder}, building a Banach space $Y(\alpha)$ of the Baire order $\alpha$, which is additionally $\ell_1$-saturated (recall that $X$ is \textbf{$\ell_1$-saturated} if every closed infinite-dimensional subspace of $X$ contains an isomorphic copy of $\ell_1$).

Another open problem  concerned  the relation between the Baire order $\omega_1$ and universality in the class of separable Banach spaces (recall that $X$ is \textbf{universal} for separable Banach spaces if it contains an isomorphic copy of every separable Banach space). On one hand, every universal space contains a complemented copy of $\mathcal{C}([0,1])$, and thus has the Baire order $\omega_1$ (see \cite[page 1044]{ArgyrosGodefroyRosenthal2003}). The validity of the opposite implication was the subject of another question of Argyros, Godefroy and Rosenthal:

\begin{problem}[\protect{\cite[Problem 4]{ArgyrosGodefroyRosenthal2003}}] \label{Problem-Universal}
    Let $X$ be a separable Banach space of the Baire order $\omega_1$. Is $X$ universal?
\end{problem}
We give a negative answer  in Theorem \ref{Theorem-MainUniversal} by constructing an $\ell_1$-saturated separable Banach space of the Baire order $\omega_1$. Clearly, an $\ell_1$-saturated Banach space cannot be universal.

Both of the answers to Problems \ref{Problem-NontrivialOrder} and \ref{Problem-Universal} will be deduced from following theorem, which will be the core of our paper.

\begin{theorem} \label{Theorem-Construction}
    For every Banach space $X$ with a basis there is an $\ell_1$-saturated Banach space $Y$ with a boundedly complete basis such that for any $\alpha\leqslant\omega_1$,
    \begin{align*}
        Y_{1+\alpha}^{**}\cong Y\oplus X_{\alpha}^{**}.
    \end{align*}
    More precisely, there is a weak*-to-weak* continuous isomorphic embedding $L:X^{**}\rightarrow Y^{**}$ such that $Y_{1+\alpha}^{**} = \iota(Y)\oplus L(X_{\alpha}^{**})$ for any  $\alpha\leqslant\omega_1$, where $\iota:Y\hookrightarrow Y^{**}$ is the canonical embedding.

    In particular, $\ord_\bB(Y) = 1 + \ord_\bB(X)$.
\end{theorem}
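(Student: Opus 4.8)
\emph{The space.} The plan is to construct $Y$ explicitly and then compute its Baire classes by transfinite induction. Fix a normalised bimonotone basis $(e_n)$ of $X$ and a partition of $\N$ into successive finite intervals $(I^n_m)_{n,m}$ with $|I^n_m|\to\infty$ quickly, regarded as one ``column'' $(I^n_m)_m$ per basis vector $e_n$. Let $Y$ be the completion of $c_{00}(\N)$ under a norm of Baire-sum type relative to $(e_n)$ regularised by an $\ell_1$-type term, in the spirit of Lindenstrauss's construction: informally, $\|z\|_Y$ is the maximum of a suitable $\ell_1$-norm of $z$ and of $\sup_N\bigl\|\sum_{n\leqslant N}a_n(z)\,e_n\bigr\|_X$, where $a_n(z)$ is the average of $z$ over an admissibly chosen block $I^n_{m_n}$ of the $n$-th column. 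One checks that the unit vectors $(f_j)$ form a normalised basis of $Y$ that is boundedly complete, since $\|\cdot\|_Y$ is a supremum of monotone functionals; hence $Y=(Y_*)^{*}$, where $Y_*$ is the closed span in $Y^{*}$ of the biorthogonal functionals $(f_j^{*})$ and $(f_j^{*})$ is a shrinking basis of $Y_*$. This gives the canonical decomposition $Y^{**}=(Y_*)^{***}=\iota(Y)\oplus W$, where $W=\ker\rho$ for the weak$^*$-to-weak$^*$ continuous restriction operator $\rho\colon Y^{**}\to Y$ satisfying $\rho\circ\iota=\id_Y$. Finally, a stabilisation argument --- the $\ell_1$-term forces every normalised block sequence of $(f_j)$ to have a subsequence equivalent to the $\ell_1$-basis, the Baire-sum term being unable to produce long-range cancellation as it sees only one block per column --- shows in the standard way that $Y$ is $\ell_1$-saturated.

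\emph{The embedding $L$.} The construction is arranged so that for every $n$ the bounded sequence of block averages $u^n_m:=|I^n_m|^{-1}\sum_{j\in I^n_m}f_j$ weak$^*$-converges in $Y^{**}$; put $L(e_n)$ equal to that limit. There is an explicit bounded operator $S\colon Y^{*}\to X^{*}$ that reads the coordinates $\langle L(e_n),y^{*}\rangle$ off $y^{*}$; it is a metric surjection with $Y_*\subseteq\ker S$, and $S^{*}(e_n)=L(e_n)$ for every $n$. Hence $L:=S^{*}\colon X^{**}\to Y^{**}$ is a weak$^*$-to-weak$^*$ continuous isomorphic embedding, a homeomorphism onto its range $L(X^{**})=(\ker S)^{\perp}$; the range is weak$^*$ closed, and since $\langle L(e_n),f_j^{*}\rangle=\lim_m\langle u^n_m,f_j^{*}\rangle=0$ for all $j$ (the blocks escape to infinity), it is contained in $W$. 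In particular $\iota(Y)\cap L(X^{**})=\{0\}$ and $\rho\circ L=0$, so every asserted decomposition $\iota(Y)\oplus L(X^{**}_\alpha)$ is a genuine topological direct sum and $\rho$ recovers the $\iota(Y)$-component.

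\emph{The base case.} The crux is the identity $Y^{**}_1=\iota(Y)\oplus L(X)$. The inclusion ``$\supseteq$'' is soft: $\iota(Y)=Y^{**}_0\subseteq Y^{**}_1$; each $L(e_n)$ is the weak$^*$ limit of the sequence $(u^n_m)_m\subset Y$, so $L(\spn(e_n))\subseteq Y^{**}_1$; and since $Y^{**}_1$ is a norm-closed linear subspace --- by the Choquet--Mokobodski theorem recalled in the introduction --- and $L$ is an isomorphic embedding, $L(X)\subseteq Y^{**}_1$. For ``$\subseteq$'' one takes a weak-Cauchy sequence $(y_k)$ in $Y$ with weak$^*$ limit $y^{**}$, writes $y^{**}=\iota(\rho y^{**})+\bigl(y^{**}-\iota(\rho y^{**})\bigr)$ with the second summand in $W$, and, using bounded completeness of $(f_j)$ together with the $\ell_1$-regularisation, shows that this summand is a norm-convergent series $\sum_n c_nL(e_n)$ with $(c_n)\in X$: the ``mass at infinity'' that $Y$ sends into its bidual is always organised, column by column, into a genuine vector of $X$. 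Designing the norm so that this holds --- and so that the same control survives the transfinite iteration below --- is the main obstacle of the paper.

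\emph{The transfinite induction.} Prove $Y^{**}_{1+\alpha}=\iota(Y)\oplus L(X^{**}_\alpha)$ by induction on $\alpha\leqslant\omega_1$, the case $\alpha=0$ being the base case. For the step, let $(y^{**}_k)$ be weak$^*$-convergent to $y^{**}$ with terms in $\bigcup_{\delta<1+\alpha}Y^{**}_\delta$; by the inductive hypothesis $y^{**}_k=\iota(v_k)+L(w_k)$ with $v_k=\rho(y^{**}_k)\in Y$ and $w_k\in X^{**}_{\beta_k}$, $\beta_k<\alpha$. Using weak$^*$ continuity of $\rho$ and of $L$, weak$^*$ closedness of $L(X^{**})$, the fact that $L$ is a weak$^*$ homeomorphism onto its range, and the concrete coordinate picture (after subtracting the block-average part $L(w_k)$, the residual $v_k$ carries no mass escaping to infinity), one shows that $(\iota(v_k))$ and $(L(w_k))$ are separately weak$^*$-convergent, say to $\eta\in Y^{**}_1$ and to $L(w)$ with $w$ the weak$^*$ limit of $(w_k)$ in $X^{**}$; since $(w_k)$ comes from $\bigcup_{\beta<\alpha}X^{**}_\beta$ we get $w\in X^{**}_\alpha$, and writing $\eta=\iota(v)+L(x)$ by the base case yields $y^{**}=\iota(v)+L(x+w)\in\iota(Y)\oplus L(X^{**}_\alpha)$. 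Conversely $\iota(Y)\subseteq Y^{**}_0\subseteq Y^{**}_{1+\alpha}$, and for $w\in X^{**}_\alpha$ a sequence from $\bigcup_{\beta<\alpha}X^{**}_\beta$ converging weak$^*$ to $w$ has $L$-image in $\bigcup_{\beta<\alpha}Y^{**}_{1+\beta}$ by the inductive hypothesis and converging weak$^*$ to $L(w)$, so $L(w)\in Y^{**}_{1+\alpha}$; linearity of $Y^{**}_{1+\alpha}$ gives ``$\supseteq$''. (At limit stages one additionally uses that $Y^{**}_{1+\alpha}$ is the weak$^*$ sequential closure of the union of the earlier classes.) Finally $Y^{**}_0=\iota(Y)\subsetneq\iota(Y)\oplus L(X)=Y^{**}_1$ (assuming $X\neq\{0\}$), and $Y^{**}_{1+\alpha}=Y^{**}_{(1+\alpha)+1}$ holds if and only if $X^{**}_\alpha=X^{**}_{\alpha+1}$; hence $\ord_\bB(Y)=1+\ord_\bB(X)$.
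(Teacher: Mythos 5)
Your outline reproduces the architecture of the paper's proof (a space built from columns whose bases converge weak$^*$ to a copy of the basis of $X$ sitting in $Y^{**}_1$, a predual giving a Dixmier decomposition $Y^{**}=\iota(Y)\oplus W$, a weak$^*$-continuous embedding $L=S^*$ obtained by dualising a quotient map onto $X^*$, and a transfinite induction on the decomposition of the Baire classes). But the proposal has genuine gaps at precisely the points where the work lies, and you acknowledge one of them yourself (``designing the norm so that this holds \dots is the main obstacle''). First, the norm is never defined; the paper's norming set is built from Azimi--Hagler summing functionals on the columns together with $X$-functionals, closed under $c_0$-sums along diagonal sequences of rectangles, and it is exactly this closure that prevents the Baire classes from collapsing (e.g.\ when $X$ has an unconditional basis) and forces $\ell_1$-saturation. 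Your ``maximum of an $\ell_1$-type term and a Baire-sum term'' is too vague to check any of this, and the claim that the unit vectors are boundedly complete ``since $\|\cdot\|_Y$ is a supremum of monotone functionals'' is not an argument (the $c_0$- and James-norms are also suprema of such functionals); the paper's proof of bounded completeness is a nontrivial diagonal extraction relying on the divergence of the Azimi--Hagler weights. Second, the base case $W\cap Y^{**}_1=L(X)$ is asserted, not proved: in the paper this is Lemma \ref{property of Y_1**}, whose proof is a Baire-category (point of continuity) argument for a Baire-1 function on the weak$^*$ compact norming set, together with the WUC/SIX dichotomy of Lemma \ref{Lemma-wuc-Y}. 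Nothing in your sketch substitutes for it.

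The most serious gap is in the induction step. You claim that for a weak$^*$ convergent sequence $y^{**}_k=\iota(v_k)+L(w_k)$ the two summands are ``separately weak$^*$-convergent,'' citing weak$^*$ continuity of $\rho$. But $\rho$ is weak$^*$-to-weak$^*$ continuous only as a map into $Y=(Y_*)^*$ with the topology $\sigma(Y,Y_*)$; convergence of $(v_k)$ in that topology does not make $(\iota(v_k))$ weak$^*$ convergent in $Y^{**}$, i.e.\ does not make $(v_k)$ weak Cauchy in $Y$. The paper flags exactly this failure ($\iota\circ P$ is not weak$^*$-to-weak$^*$ continuous) and repairs it with Lemma \ref{Lemma-Dixmier-weaklyCauchy}, which only yields a weak Cauchy \emph{subsequence} of $(Pv_k^{**})$ and whose proof is a delicate extraction along diagonal rectangles using the WUC-functional calculus; Corollary \ref{sequential-density} then gives the weak$^*$ sequential density of $\bigcup_{\beta<\alpha}Y^{**}_\beta\cap V^\perp$ in $Y^{**}_\alpha\cap V^\perp$ that actually drives the induction. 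Your parenthetical ``the residual $v_k$ carries no mass escaping to infinity'' gestures at the right phenomenon but is not a proof, and without it the induction does not close.
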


Concerning Problem \ref{Problem-NontrivialOrder}, we apply Theorem \ref{Theorem-Construction} recursively for $X$ with $\ord_{\bB}(X) = n$ to obtain $Y$ with $\ord_\bB (Y) = n+1$, which yields the result for finite ordinals. We then use an $\ell_1$-sum argument to establish the result for the ordinal $\omega$. Applying Theorem \ref{Theorem-Construction} for $X = \mathcal{C}([0,1])$ yields negative answer to Problem \ref{Problem-Universal}. We note that the main drawback of our approach to give the complete answer to Problem \ref{Problem-NontrivialOrder} is that we can build, from a separable Banach space $X$ with $\ord_\bB(X) = \alpha$, a separable Banach space $Y$ with $\ord_\bB(Y) = 1 + \alpha$, instead of $\ord_\bB(Y) = \alpha + 1$ needed for the recursive construction for infinite ordinals. Finally, in Theorem \ref{Theorem-LindenstraussAnalogue}, we obtain, in the class of Banach spaces with a basis, a Baire classes analogue of a well known theorem of Lindenstrauss \cite{Lindenstrauss1971} (see also \cite{James1960}), stating that for any separable Banach space $X$, there is a separable Banach space $\xX$ with $\xX^{**} / \xX$ isomorphic to $X$ (generalized in \cite{DavisEtAl1974} to weakly compactly generated Banach spaces).

A key building stone of our construction will be the space $X_{AH}$ constructed by Azimi and Hagler in \cite{AzimiHagler1986}. This space was constructed as an example of an $\ell_1$-saturated Banach space which fails the Schur property. What is important in our context is that $X_{AH}$ is of codimension one in its first Baire class $(X_{AH})^{**}_1$, that is $(X_{AH})^{**}_1 = X_{AH} \oplus \R$. The space $X_{AH}$ can be thus understood as a Baire-1 variant of the James space \cite{James1951}, which is of codimension one in its whole bidual. The fact that the difference between $X_{AH}$ and $(X_{AH})^{**}_1$ is as small as possible will allow us to control the structure of Baire classes in our construction. The precise definition of $X_{AH}$ and some of its properties will be recalled in Section \ref{section_AH_space}.

Before proceeding with the construction let us mention that the study of Baire classes is closely related to the study of affine classes of functions on compact convex sets (see e.g. \cite{Spurny2010}), and of weak* sequential closures and weak* derived sets (see e.g. the survey paper \cite{Ostrovskii2002}). Finally, we note that Problem \ref{Problem-NontrivialOrder} has negative solution if we restrict our attention only to the class of Lindenstrauss spaces (recall that $X$ is a \textbf{Lindenstrauss space} if its dual is isometric to an $L_1(\mu)$ space). It follows from the results of \cite{LudvikSpurny2014}, but not explicitly stated in \cite{LudvikSpurny2014}, therefore for the sake of completeness we give an  argument in Section \ref{section_lindenstrauss}.

Finally we present the following question concerning nonseparable setting, posed by Ond\v{r}ej Kalenda.
\begin{problem}
Let $X$ be a Banach space of the Baire order $\omega_1$. Does $X$ admit a separable subspace of the Baire order $\omega_1$?
\end{problem}

The paper is organized as follows. At the end of this introductory section we recall the basic notions and some of their properties. Section 2. contains general observations on Baire classes of a Banach space, as well as description of the Baire order of separable Lindenstrauss spaces. Section 3. is devoted to the proof of Theorem \ref{Theorem-Construction}. We gather its consequences, answering the problems mentioned above, in Section 4. 

\subsection*{Acknowledgements}
We are deeply indebted to Ond\v{r}ej Kalenda for reading the first version of the manuscript and valuable comments. 

\subsection*{Basic definitions and notation}
\label{basics}

We shall use the standard terminology and notation, for any unexplained notation or concept we refer to \cite{AlbiacKalton,FabianEtAl,Kechris}. 

Let $X$ be a Banach space. By $B_X$ we denote the closed unit ball of $X$. A set $B\subset B_{X}$ is $c$-\textbf{norming}, $c\geqslant 1$, for  $Z\subset X^*$, provided $\|x^*\|\leqslant c\sup\limits_{x\in B}|x^*(x)|$ for every  $x^*\in Z$.  A set $B\subset B_X$ is called norming for $Z\subset X^*$, if it is $c$-norming for some $c\geqslant 1$. 

Recall that any dual Banach space $X$, i.e. a space of the form $X=Z^*$ for some Banach space $Z$, is complemented in its bidual space $X^{**}$ by means of the \textbf{Dixmier projection} $P=\kappa^*:X^{**}\ni x^{**}\mapsto x^{**}\restrict{\kappa(Z)}\in X=Z^*$, where $\kappa:Z\hookrightarrow X^*$ is the canonical embedding of $Z$ into its bidual space $Z^{**}=X^*$. 

Fix Banach spaces $X,Y$. We write $X\cong Y$ if $X,Y$ are isomorphic. We say that a basic sequence $(x_n)_{n=1}^\infty\subset X$ \textbf{dominates} another basic sequence $(y_n)_{n=1}^\infty\subset Y$ provided the mapping $T$ carrying each $x_n$ to $y_n$, $n\in\N$, extends to a bounded operator $T:\spn\{x_n:n\in\N\}\to\spn\{y_n:n\in\N\}$. We say that basic sequences $(x_n)_{n=1}^\infty\subset X$ and $(y_n)_{n=1}^\infty\subset Y$ are \textbf{equivalent} if $(x_n)_{n=1}^\infty$ dominates $(y_n)_{n=1}^\infty$ and vice versa. 

Recall that a (Schauder) basis $(e_n)_{n=1}^\infty$ of $X$ is \textbf{boundedly complete}, if for any scalars $(a_n)_{n=1}^\infty$, the series $\sum_{n=1}^\infty a_ne_n$ converges (in norm) whenever  $\sup_N\|\sum_{n=1}^N a_ne_n\|<\infty$ and is \textbf{bimonotone} if $\norm{P_m - P_n} = 1$ for every $n < m$, where $(P_k)_{k=1}^\infty$ are the canonical projections associated to the basis $(e_n)_{n = 1}^\infty$.

A formal series $\sum_{n=1}^\infty x_n$ of vectors $(x_n)_{n=1}^\infty\subset X$ is \textbf{weakly unconditionally Cauchy} whenever $\sum_{n=1}^\infty |x^*(x_n)|<\infty$ for every $x^*\in X^*$, equivalently (see \cite[Lemma 2.4.6]{AlbiacKalton}) if 
\[\sup\left\{\left\|\sum_{n=1}^N\varepsilon_nx_n\right\|:(\varepsilon_n)_n\subset\{-1,1\}^N, N\in\N\right\}<\infty.\]
Moreover, a formal series $\sum_{n=1}^\infty x_n^*$ of functionals $(x_n^*)_{n=1}^\infty\subset X^*$ with $\sum_{n=1}^\infty |x_n^*(x)|<\infty$ for every $x \in X$,  is weakly unconditionally Cauchy (use the Banach-Steinhaus theorem). 

Given normed spaces $(X_n,\|\cdot\|_n)$, $n\in\N$, we define their $\ell_1$-direct sum by the  formula 
\[\big(\bigoplus X_n\big)_{\ell_1}= \Big\{x=(x_n)_{n=1}^\infty\in\prod_{n=1}^\infty X_n: \|x\|:=\sum_{n=1}^\infty \|x_n\|_n<\infty\Big\}.\]
It follows easily that $\big((\bigoplus X_n)_{\ell_1}, \|\cdot\|\big)$ is a Banach space if and only if $X_n$ is a Banach space for every $n \in \N$. We shall need the following simple fact, we include the proof for the sake of completeness. Recall that $X$ is \textbf{$\ell_1$-saturated}, provided any its closed infinite dimensional subspace contains an isomorphic copy of $\ell_1$. 

\begin{lemma}\label{Lemma_ell_1_saturation}
    Let $X_n$, $n\in\N$, be $\ell_1$-saturated Banach spaces. Then $(\bigoplus X_n)_{\ell_1}$ is also $\ell_1$-saturated. 
\end{lemma}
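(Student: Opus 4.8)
The plan is to reduce the statement to the classical fact that $\ell_1$ embeds into any Banach space containing a normalized sequence that is not weakly null in a sufficiently robust sense, via a gliding-hump (sliding-bump) argument adapted to the $\ell_1$-sum structure. Concretely, let $Z$ be a closed infinite-dimensional subspace of $E := (\bigoplus_n X_n)_{\ell_1}$; I must produce a sequence in $Z$ equivalent to the unit vector basis of $\ell_1$. Denote by $Q_n \colon E \to X_n$ the canonical coordinate projections and, for a finite set $F \subset \N$, write $P_F = \sum_{n \in F} Q_n$ for the (norm-one, since it is an $\ell_1$-sum) projection onto the block $\bigoplus_{n \in F} X_n$; note $\|x - P_F x\| = \sum_{n \notin F} \|Q_n x\|_n \to 0$ as $F \nearrow \N$ for each fixed $x \in E$.

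First I would split into two cases according to whether $Z$ is "eventually supported on finitely many coordinates" or not. Case 1: suppose there is $N$ such that $P_{\{1,\dots,N\}}\restrict{Z}$ is not an isomorphism onto its image on any infinite-dimensional subspace — more usefully, suppose for every $\eps > 0$ and every finite $F$ there is a norm-one $z \in Z$ with $\|P_F z\| < \eps$. Then I can run a standard perturbation/gliding hump: pick inductively norm-one $z_k \in Z$ and finite intervals $F_k$ of $\N$ with $\max F_k < \min F_{k+1}$ such that $\|z_k - P_{F_k} z_k\| < 2^{-k-2}$; this uses the displayed tail-convergence to first choose $F_k$ large enough to nearly capture $z_{k}$ and then the hypothesis to choose the next $z_{k+1}$ nearly disjointly supported from $\bigcup_{j \le k} F_j$. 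The vectors $w_k := P_{F_k} z_k$ are then a normalized block sequence with pairwise disjoint supports in the $\ell_1$-sum, hence $\|\sum a_k w_k\| = \sum_k |a_k| \|w_k\| \asymp \sum_k |a_k|$, so $(w_k)$ — and therefore by the small-perturbation principle $(z_k)$ — is equivalent to the $\ell_1$-basis; a subspace of $Z$ spanned by a tail of $(z_k)$ is the desired copy of $\ell_1$. (Here I never even used that the $X_n$ are $\ell_1$-saturated.)

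Case 2: the negation of the Case 1 hypothesis says there exist $\eps_0 > 0$ and a finite set $F_0 \subset \N$ such that $\|P_{F_0} z\| \ge \eps_0 \|z\|$ for all $z \in Z$, i.e. $P_{F_0}\restrict{Z}$ is an isomorphic embedding of $Z$ into the finite $\ell_1$-sum $\bigoplus_{n \in F_0} X_n$, which is isomorphic to $\bigoplus_{n \in F_0} X_n$ with an equivalent norm. Since $Z$ is infinite-dimensional, $P_{F_0}(Z)$ is an infinite-dimensional subspace of $\bigoplus_{n \in F_0} X_n$; an easy induction on $|F_0|$ reduces to: an infinite-dimensional subspace $W$ of a finite $\ell_1$-sum $X_i \oplus_1 X'$ either has $Q_i\restrict{W}$ non-injective on some infinite-dimensional subspace (then pass to $\ker Q_i \cap W \subset X'$ and induct on $|F_0|$), or $Q_i\restrict{W}$ is an isomorphic embedding into $X_i$, whence $W$ — being infinite-dimensional — contains a copy of $\ell_1$ by $\ell_1$-saturation of $X_i$. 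Pulling back through the isomorphism $P_{F_0}\restrict{Z}$ gives a copy of $\ell_1$ inside $Z$, completing the proof.

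The main obstacle — really the only place requiring care — is making the case distinction clean and the gliding-hump bookkeeping in Case 1 precise: one must interleave "choose $F_k$ to absorb the current vector up to $2^{-k}$" with "invoke the hypothesis to choose the next vector nearly supported past $F_k$", and then verify that the total perturbation $\sum_k \|z_k - w_k\| < 1/2$ lets the principle of small perturbations transfer the $\ell_1$-equivalence from $(w_k)$ to $(z_k)$. Everything else is routine: the $\ell_1$-sum norm makes disjointly supported blocks automatically $1$-equivalent to the $\ell_1$-basis, and the finite-sum reduction in Case 2 is a short induction. I would present Case 1 and Case 2 as the two halves of the argument, with the dichotomy stated at the outset.
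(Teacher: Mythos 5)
Your Case~1 is sound and is essentially the paper's gliding-hump half of the argument: from the hypothesis that for every finite $F$ and every $\eps>0$ there is a norm-one $z\in Z$ with $\|P_Fz\|<\eps$, the interleaved choice of vectors and blocks yields disjointly supported $w_k=P_{F_k}z_k$ that are automatically equivalent to the unit vector basis of $\ell_1$ by additivity of the $\ell_1$-sum norm, and $\sum_k\|z_k-w_k\|<1/4$ transfers this to $(z_k)$; you are also right that $\ell_1$-saturation of the $X_n$ plays no role there. The gap is in Case~2. The dichotomy driving your finite-sum induction --- ``either $Q_i|_W$ is non-injective on some infinite-dimensional subspace, or $Q_i|_W$ is an isomorphic embedding into $X_i$'' --- is false: a bounded injective operator need not be bounded below. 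Concretely, in $(X_1\oplus X_2)_{\ell_1}$ with $X_1=X_2=\ell_1$, let $W$ be the closed linear span of the vectors $(2^{-k}e_k,f_k)$, $k\in\N$, where $(e_k)$, $(f_k)$ are the unit vector bases of the two summands. Then $Q_1|_W$ is injective, so $\ker Q_1\cap W=\{0\}$ and there is nothing to pass to, yet $Q_1$ is not bounded below on $W$ (nor on any infinite-dimensional subspace of $W$), so neither horn of your dichotomy applies. Moreover, even when $Q_i|_W$ does have nontrivial kernel, $\ker Q_i\cap W$ may be finite-dimensional and hence cannot carry the induction.

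The alternative you actually need is ``$Q_i$ is bounded below on some infinite-dimensional closed subspace of $W$'' versus ``$Q_i$ is bounded below on no such subspace''. The first horn uses saturation of $X_i$ as you intend; the second requires an argument your proposal omits entirely: using that every finite-codimensional subspace of $W$ still contains norm-one vectors $w$ with $\|Q_iw\|$ arbitrarily small, select a normalized basic sequence $(w_k)\subset W$ with basis constant at most $2$ and $\sum_k\|Q_iw_k\|<1/8$; since $|a_k|\leqslant 4\|\sum_ja_jw_j\|$, one gets $\|Q_iw\|\leqslant\|w\|/2$ and hence $\|w-Q_iw\|\geqslant\|w\|/2$ for every $w$ in the closed span, so the complementary projection embeds that span into the $\ell_1$-sum of the remaining summands, and one inducts. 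This is a genuine missing step, not bookkeeping: it is exactly the case of a subspace that is captured by a finite block but by no single bounded-below coordinate, which your false dichotomy silently skips. (The paper organizes its dichotomy around a single coordinate projection being bounded below on $Z$, sending $Z$ directly into one $\ell_1$-saturated summand, and so never sets up your finite-sum induction; your Case~1 coincides with its gliding-hump alternative.)
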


\begin{proof}
    Let $Z$ be a closed infinite dimensional subspace of $(\bigoplus X_n)_{\ell_1}$. Denote by $\pi_n: (\bigoplus X_n)_{\ell_1}\to X_n$, $n\in\N$, the canonical projections. If there is  $n\in\N$ and $c>0$ with $\|\pi_nz\|\geqslant c\|z\|$ for any $z\in (\bigoplus X_n)_{\ell_1}$, then $\pi_n\restrict{Z}$ is an isomorphic embedding of $Z$ into the $\ell_1$-saturated space $X_n$, so $Z$ is $\ell_1$-saturated. Otherwise, using the definition of the norm on $(\bigoplus X_n)_{\ell_1}$, we pick inductively a normalized sequence $(z_n)_n\subset Z$ and $(k_n)_n\subset\N$ so that $\|z_n-\tilde z_n\|<2^{-n}$, where $\tilde z_n=(\pi_{k_{n+1}}-\pi_{k_n})(z_n)$,  $n\in\N$. By the definition of the norm in $Z$ the sequence $(\tilde z_n)_n$ is equivalent to the unit vector basis of $\ell_1$, whereas by the Krein-Milman-Rutman theorem (see \cite[Theorem 1.3.9]{AlbiacKalton}) $(z_n)_{n\in J}$ and $(\tilde z_n)_{n\in J}$ are equivalent for some infinite $J\subset\N$.
\end{proof}

\section{Baire functionals}

 We gather now observations on Baire classes of general Banach spaces, needed in the proof of Theorem \ref{Theorem-Construction}, and discuss the Lindenstrauss spaces case.

\subsection{General facts}\label{Section_general}

We recall that for a Banach space $X$, the \textbf{Baire classes} $X^{**}_\alpha$, $\alpha\leqslant\omega_1$ are defined recursively as follows. We set $X^{**}_0 = X \subset X^{**}$ (identifying $X$ with its canonical embedding in $X^{**}$)  and, if $X^{**}_\beta\subset X^{**}$, $\beta < \alpha$ are already defined, we set $X^{**}_\alpha\subset X^{**}$ to be the set of all limits of weak* convergent sequences from $\bigcup_{\beta < \alpha}X^{**}_\beta$. Elements of each Baire class $X^{**}_\alpha$ will be called \textbf{Baire-$\alpha$ functionals} of $X$. Note that in \cite{ArgyrosGodefroyRosenthal2003} the classes $(X_\alpha^{**})_{\alpha \leqslant\omega_1}$ are called the intrinsic Baire classes of $X$. We chose to drop the word "intrinsic" for the sake of brevity. 

The following Lemma may be concluded using \cite[Lemma 3.3.]{Smith}, but we include an elementary proof for the sake of completeness.

\begin{lemma}\label{norming-set}
    Let $\mathcal{S}\subset B_{X^*}$ be a weak* compact $c$-norming set for a Banach space $X$, $c\geqslant 1$. Then 
    $\mathcal{S}$ is $c$-norming for $X_{\omega_1}^{**}$. Moreover, a sequence $(x_n^{**})_n\subset X_{\omega_1}^{**}$ is weak* convergent  if and only if it is bounded and pointwise convergent on $\mathcal{S}$.
\end{lemma}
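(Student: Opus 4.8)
The plan is to represent every functional in $X^{**}_{\omega_1}$ as integration against a bounded Borel function on $\mathcal{S}$, and then read off both assertions from the dominated convergence theorem. Write $M(\mathcal{S})=C(\mathcal{S})^*$ for the Radon measures on the weak* compact space $\mathcal{S}$, and for $\mu\in M(\mathcal{S})$ define $\phi_\mu\in X^*$ by $\phi_\mu(x)=\int_{\mathcal{S}}s(x)\,d\mu(s)$; since $s\mapsto s(x)$ is weak* continuous on $\mathcal{S}$ and bounded by $\|x\|$, this is well defined with $\|\phi_\mu\|\leqslant\|\mu\|$, the map $\mu\mapsto\phi_\mu$ is weak*-to-weak* continuous, and $\phi_{\delta_s}=s$. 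The first observation is that the $c$-norming hypothesis is equivalent to $\tfrac1c B_{X^*}$ being contained in the weak*-closed absolutely convex hull of $\mathcal{S}$ (a standard bipolar computation: the condition $\|x\|\leqslant c\sup_{s\in\mathcal{S}}|s(x)|$ says that the pre-polar of that hull is contained in $cB_X$, and taking polars inverts this). That hull is exactly the weak* compact set $W:=\{\phi_\mu:\mu\in M(\mathcal{S}),\ \|\mu\|\leqslant1\}$, being the continuous image of $B_{M(\mathcal{S})}$ under $\mu\mapsto\phi_\mu$ and containing all of $\mathcal{S}$. Hence $B_{X^*}\subseteq\{\phi_\mu:\|\mu\|\leqslant c\}$, so \emph{every} $x^*\in X^*$ equals $\phi_\mu$ for some $\mu\in M(\mathcal{S})$ with $\|\mu\|\leqslant c\|x^*\|$.

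The heart of the proof is the following claim, proved by transfinite induction on $\alpha\leqslant\omega_1$: for every $x^{**}\in X^{**}_\alpha$, the function $\hat x^{**}\colon\mathcal{S}\to\K$, $\hat x^{**}(s)=x^{**}(s)$, is Borel and bounded by $\|x^{**}\|$, and
\[
x^{**}(\phi_\mu)=\int_{\mathcal{S}}\hat x^{**}\,d\mu\qquad\text{for every }\mu\in M(\mathcal{S}).
\]
The base case $\alpha=0$ is immediate from the definition of $\phi_\mu$ and weak* continuity of $s\mapsto s(x)$. For $\alpha>0$, write $x^{**}$ as the weak* limit of a sequence $(x^{**}_n)$ from $\bigcup_{\beta<\alpha}X^{**}_\beta$, which is uniformly bounded, say by $K$, by the uniform boundedness principle; since every $s\in\mathcal{S}$ lies in $X^*$ we get $\hat x^{**}_n\to\hat x^{**}$ pointwise on $\mathcal{S}$, so $\hat x^{**}$ is a bounded Borel function (a pointwise limit of a sequence of Borel functions), and for $\mu\in M(\mathcal{S})$ the inductive hypothesis together with the dominated convergence theorem gives $x^{**}(\phi_\mu)=\lim_n x^{**}_n(\phi_\mu)=\lim_n\int\hat x^{**}_n\,d\mu=\int\hat x^{**}\,d\mu$. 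I expect this inductive statement — carrying the integral representation rather than a bare norm inequality — to be the only subtle point: a naive induction using norm estimates alone fails, because $\sup_{\mathcal{S}}|x^{**}_n(\cdot)|$ need not be controlled by $\sup_{\mathcal{S}}|x^{**}(\cdot)|$ in the limit (think of bump functions), while the dominated convergence theorem is precisely the mechanism that passes to the limit correctly.

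Granting the claim, the first assertion follows at once: for $x^{**}\in X^{**}_{\omega_1}$,
\[
\|x^{**}\|=\sup_{x^*\in B_{X^*}}|x^{**}(x^*)|\leqslant\sup_{\|\mu\|\leqslant c}\Bigl|\int_{\mathcal{S}}\hat x^{**}\,d\mu\Bigr|\leqslant c\sup_{s\in\mathcal{S}}|x^{**}(s)|,
\]
so $\mathcal{S}$ is $c$-norming for $X^{**}_{\omega_1}$. For the ``moreover'' part, the forward implication is standard: a weak* convergent sequence is bounded by the uniform boundedness principle and converges pointwise on $\mathcal{S}\subseteq X^*$. Conversely, let $(x^{**}_n)\subseteq X^{**}_{\omega_1}$ be bounded, say $\|x^{**}_n\|\leqslant K$, and pointwise convergent on $\mathcal{S}$, say $x^{**}_n(s)\to f(s)$. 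Given any $x^*\in X^*$, write $x^*=\phi_\mu$ as above; then $x^{**}_n(x^*)=\int_{\mathcal{S}}\hat x^{**}_n\,d\mu$ by the claim, the integrands are bounded by $K$ and converge pointwise to $f$, so by the dominated convergence theorem $\lim_n x^{**}_n(x^*)=\int_{\mathcal{S}}f\,d\mu$ exists. Thus $x^*\mapsto\lim_n x^{**}_n(x^*)$ is a well-defined linear functional on $X^*$, bounded by $K$, hence an element of $X^{**}$, and by construction it is the weak* limit of $(x^{**}_n)$. This completes the proof; note that no reduction to the separable case is needed, as $\mathcal{S}$ is not required to be metrizable anywhere above.
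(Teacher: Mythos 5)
Your proof is correct and follows essentially the same route as the paper: the paper introduces the restriction operator $T\colon X\to\mathcal{C}(\mathcal{S})$ and proves by transfinite induction, via the dominated convergence theorem, that $T^{**}(x^{**})$ is the bounded Baire function $x^{**}\restriction_{\mathcal{S}}$ acting on $\mathcal{M}(\mathcal{S})$ by integration, which is precisely your inductive claim $x^{**}(\phi_\mu)=\int_{\mathcal{S}}\hat x^{**}\,d\mu$ stated in terms of $T^*\mu=\phi_\mu$. The only presentational difference is that you make explicit, via the bipolar theorem, the fact that the $c$-norming hypothesis yields $B_{X^*}\subseteq\{\phi_\mu:\|\mu\|\leqslant c\}$, a duality step the paper uses implicitly when passing from $\|x\|\leqslant c\|Tx\|$ to $\|x^{**}\|\leqslant c\|T^{**}x^{**}\|$.
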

\begin{proof}
    Consider 
    \[T:X\ni x\mapsto x\restrict{\mathcal{S}}\in\mathcal{C}(\mathcal{S}),\]
    where $\mathcal{C}(\mathcal{S})$ denotes the space of continuous functions on $\mathcal{S}$ equipped with a weak* topology. We will show inductively that $T^{**}(X_\alpha^{**})\subset\mathcal{B}_\alpha(\mathcal{S})$, the space of bounded Baire-$\alpha$ functions on $\mathcal{S}$, for every $\alpha\leqslant\omega_1$, and $T^{**}(x^{**})=x^{**}\restrict{\mathcal{S}}$ for every $x^{**}\in X_{\omega_1}^{**}$. The case $\alpha=0$ is clear. Now, for a sequence $(x_n^{**})_n\in\bigcup_{\beta<\alpha}X_{\beta}^{**}$ which is weak* convergent to $x^{**}\in X_\alpha^{**}$, we have
    \[T^{**}(x^{**})(\mu)=\lim_{n\rightarrow\infty}T^{**}(x_n^{**})(\mu), \ \mu\in\mathcal{M}(\mathcal{S})=\mathcal{C}(\mathcal{S})^*.\]
    By the inductive hypothesis $T^{**}(x_n^{**})\in\bigcup_{\beta<\alpha}\mathcal{B}_{\beta}(\mathcal{S})$ for every $n \in \N$, hence
    \[T^{**}(x^{**}_n)(\mu)=\int_\mathcal{S}T^{**}(x^{**}_n)\,d\mu.\]
    By the Lebesgue dominated convergence theorem,  $T^{**}(x^{**})=\lim\limits_{n\rightarrow\infty}T^{**}(x_n^{**})$, where the limit on the right-hand side is pointwise on $\mathcal{S}$, thus $T^{**}(x^{**})\in\mathcal{B}_\alpha(\mathcal{S})$ and $T^{**}(x^{**})=x^{**}\restrict{\mathcal{S}}$. 
    
    In order to show that $\mathcal{S}$ is norming for $X_{\omega_1}^{**}$ note that, by the assumption on $\mathcal{S}$, for some universal $c\geqslant 1$,
    \[\|x\|\leqslant c\|T(x)\|, \quad x\in X.\]
    Hence,
    \[\|x^{**}\|\leqslant c\|T^{**}(x^{**})\|,\quad x^{**}\in X^{**}.\]
    For $x^{**}\in X^{**}_{\omega_1}$, $T^{**}(x^{**})\in\mathcal{B}_{\omega_1}(\mathcal{S})$ by the previous step, thus
    \[\|T^{**}(x^{**})\|=\sup\limits_{s\in \mathcal{S}}|T^{**}(x^{**})(s)| = \sup_{s \in \sS} |x^{**}(s)|,\]
    which proves the claim.
    
    To prove the equivalent characterization of weak* convergence it is enough to use the Lebesgue dominated convergence theorem and the fact that $T^{**}$ is a weak* homeomorphic embedding with a weak* closed image (use the Closed Range theorem). 
\end{proof}

\begin{lemma}\label{wuc} Let $X$ be a Banach space. Let $\sum_n x_n^*$ be a weakly unconditionally Cauchy series in $X^*$, weak* convergent to $x^*\in X^*$. Then for every $x^{**}\in X_{\omega_1}^{**}$,
\[x^{**}(x^*)=\sum_{n=1}^\infty x^{**}(x^*_n).\]
\end{lemma}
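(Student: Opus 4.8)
The plan is to transport the identity to the bidual of $\ell_1$ and to exploit that $\ell_1$ is weakly sequentially complete, so that all of its Baire classes collapse onto $\ell_1$ itself.

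First I would build an operator out of $\elli$. Since $\sum_n x_n^*$ is weakly unconditionally Cauchy there is a constant $C$ with $\|\sum_{n\in F}\varepsilon_n x_n^*\|\leqslant C$ for all finite $F\subset\N$ and all signs $\varepsilon$; consequently $\sum_n|x_n^*(x)|\leqslant C\|x\|$ for every $x\in X$. Hence for each $d=(d_n)_n\in\elli$ the formula $\phi_d(x)=\sum_n d_n x_n^*(x)$ defines an element $\phi_d\in X^*$ with $\|\phi_d\|\leqslant C\|d\|_\infty$, and $\Theta\colon\elli\to X^*$, $\Theta d=\phi_d$, is a bounded linear operator with $\Theta e_n=x_n^*$ for all $n$ and $\Theta\one=x^*$, where $\one=(1,1,\dots)$. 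The last equality holds because $\sum_{n\leqslant N}x_n^*\to x^*$ weak* while $\sum_n x_n^*(x)$ converges, so that $x^*(x)=\sum_n x_n^*(x)=\phi_{\one}(x)$ for every $x\in X$.

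Next I would pass to the adjoint $\Theta^*\colon X^{**}\to\elli^*=\ell_1^{**}$, which is weak*-to-weak* continuous, being an adjoint operator. It maps $X=X^{**}_0$ into $\ell_1\subset\ell_1^{**}$: for $x\in X$ the functional $\Theta^*x$ sends $d\in\elli$ to $\sum_n d_n x_n^*(x)=\langle d,(x_n^*(x))_n\rangle$, and $(x_n^*(x))_n\in\ell_1$. I would then show by transfinite induction on $\alpha\leqslant\omega_1$ that $\Theta^*(X^{**}_\alpha)\subseteq\ell_1$. The base case $\alpha=0$ has just been checked; for the inductive step, if $x^{**}\in X^{**}_\alpha$ is a weak* limit of a sequence $(y^{**}_k)_k$ drawn from $\bigcup_{\beta<\alpha}X^{**}_\beta$, then each $\Theta^*y^{**}_k$ lies in $\ell_1$ by the inductive hypothesis, and $(\Theta^*y^{**}_k)_k$ converges weak* in $\ell_1^{**}$ to $\Theta^*x^{**}$; thus $(\Theta^*y^{**}_k)_k$ is a weakly Cauchy sequence in $\ell_1$, and weak sequential completeness of $\ell_1$ forces its weak* limit $\Theta^*x^{**}$ to lie in $\ell_1$. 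Equivalently, $\Theta^*$ carries $X^{**}_\alpha$ into the $\alpha$-th Baire class of $\ell_1$, which equals $\ell_1$ for every $\alpha$ since $\ell_1$ is weakly sequentially complete (cf. example (i) in the introduction).

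It then remains to read off the conclusion. Given $x^{**}\in X^{**}_{\omega_1}$, put $\mu=\Theta^*x^{**}=(\mu_n)_n\in\ell_1$; then $x^{**}(x_n^*)=\langle x^{**},\Theta e_n\rangle=(\Theta^*x^{**})(e_n)=\mu_n$ and $x^{**}(x^*)=\langle x^{**},\Theta\one\rangle=(\Theta^*x^{**})(\one)=\sum_n\mu_n$, whence $x^{**}(x^*)=\sum_n x^{**}(x_n^*)$. The one genuine obstacle is that the obvious route — fix a weak*-convergent sequence witnessing $x^{**}\in X^{**}_\alpha$ and push the identity through it term by term in a transfinite induction — requires interchanging $\lim_k$ with the infinite sum $\sum_n$, which fails in general (the $\ell_1$-mass of $(x^{**}(x_n^*))_n$ could escape to infinity along the sequence). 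Factoring through $\ell_1^{**}$ replaces that illegitimate interchange with the single structural input that $\ell_1$ is weakly sequentially complete, and this is exactly what propagates the statement up to $\alpha=\omega_1$.
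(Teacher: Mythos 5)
Your proof is correct and follows essentially the same route as the paper: the paper defines $T\colon X\to\ell_1$, $x\mapsto(x_n^*(x))_n$, notes $T^*\colon\ell_\infty\to X^*$ is exactly your $\Theta$, and uses weak sequential completeness of $\ell_1$ to get $T^{**}(X^{**}_{\omega_1})\subset\ell_1$ before evaluating at $\sum_n e_n$. The only difference is cosmetic -- you build the operator $\ell_\infty\to X^*$ directly instead of as an adjoint.
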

\begin{proof}
    By the assumption the operator 
    \[T:X\ni x\mapsto(x_n^*(x))_n\in\ell_1\]
    is well-defined and bounded. Its adjoint $T^*:\ell_\infty\longrightarrow X^*$ is given by the formula:
    \[T^*:\ell_\infty\ni (a_n)_n\mapsto\sum_{n=1}^\infty a_nx_n^*\in X^* \]
    where the series on the right-hand side is converging weak*. In particular, for the unit vectors $(e_n)_n\subset\ell_\infty$, 
    \[x^*=T^*\left(\sum_{n=1}^\infty e_n\right).\]
    By induction and weak sequential completeness of $\ell_1$ we obtain  $T^{**}(X_{\omega_1}^{**})\subset\ell_1$. 
    Fix $x^{**}\in X^{**}_{\omega_1}$. By the above, $T^{**}(x^{**})$ is weak* continuous on $\ell_\infty$, therefore 
    \[x^{**}(x^*)=x^{**}\left(T^*\left(\sum_{n=1}^\infty e_n\right)\right)=T^{**}(x^{**})\left(\sum_{n=1}^\infty e_n\right)=\sum_{n=1}^\infty T^{**}(x^{**})(e_n)=\sum_{n=1}^\infty x^{**}(x^*_n).\]
    \end{proof}

\begin{proposition} \label{Prop-Order-Baire1-Sum}
    Let $X_n$, $n \in \N$, be Banach spaces and $X = (\bigoplus X_n)_{\ell_1}$. Then for every $\alpha \leqslant\omega_1$ we have $X^{**}_\alpha \subset (\bigoplus (X_n)^{**}_\alpha)_{\ell_1}\subset X^{**}$. In particular,  $\ord_\bB(X) = \sup_n \ord_\bB(X_n)$ for strictly increasing $(\ord_\bB(X_n))_n$. 
\end{proposition}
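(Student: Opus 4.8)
The plan is to describe every higher Baire functional of the $\ell_1$-sum through its coordinate projections. Write $\pi_n\colon X\to X_n$ for the canonical norm-one projections and $\iota_n\colon X_n\to X$ for the canonical isometric inclusions, so that $\pi_n\iota_n=\id_{X_n}$; their biadjoints $\pi_n^{**}\colon X^{**}\to X_n^{**}$ and $\iota_n^{**}\colon X_n^{**}\to X^{**}$ are weak*-to-weak* continuous, extend $\pi_n$ and $\iota_n$, and satisfy $\pi_n^{**}\iota_n^{**}=\id_{X_n^{**}}$. Identifying $X^*$ with $(\bigoplus X_n^*)_{\ell_\infty}$, I would first record the elementary estimate $\sum_n\|\pi_n^{**}x^{**}\|\leqslant\|x^{**}\|$ for every $x^{**}\in X^{**}$ (test $x^{**}$ against norm-one functionals supported on finitely many coordinates), so that $(\pi_n^{**}x^{**})_n$ always represents an element of the canonically embedded subspace $(\bigoplus X_n^{**})_{\ell_1}\subseteq X^{**}$.

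The one genuinely nontrivial point, which I expect to be the main obstacle, is to show that this representative equals $x^{**}$ itself as soon as $x^{**}\in X^{**}_{\omega_1}$ — in other words, that a higher Baire functional of an $\ell_1$-sum carries no singular mass ``at infinity''. This is exactly what Lemma~\ref{wuc} is for: given $\psi=(\psi_n)_n\in X^*=(\bigoplus X_n^*)_{\ell_\infty}$, the series $\sum_n\pi_n^*(\psi_n)$ is weakly unconditionally Cauchy, since $\sum_n|\langle x,\pi_n^*\psi_n\rangle|=\sum_n|\langle\pi_nx,\psi_n\rangle|\leqslant\|\psi\|\sum_n\|\pi_nx\|=\|\psi\|\,\|x\|<\infty$ for every $x\in X$, and its partial sums converge weak* to $\psi$. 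Lemma~\ref{wuc} then gives $\langle x^{**},\psi\rangle=\sum_n\langle\pi_n^{**}x^{**},\psi_n\rangle$ for all $x^{**}\in X^{**}_{\omega_1}$ and all $\psi$, i.e.\ $x^{**}=(\pi_n^{**}x^{**})_n$ inside $X^{**}$. Together with a routine transfinite induction on $\alpha$ showing $\pi_n^{**}(X^{**}_\alpha)\subseteq(X_n)^{**}_\alpha$ (the case $\alpha=0$ being $\pi_n(X)=X_n$, the successor/limit step using only weak*-to-weak* continuity of $\pi_n^{**}$), this yields the first assertion $X^{**}_\alpha\subseteq(\bigoplus(X_n)^{**}_\alpha)_{\ell_1}$ for every $\alpha\leqslant\omega_1$.

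For the statement about the order, set $\gamma_n=\ord_\bB(X_n)$ and $\alpha^*=\sup_n\gamma_n$; strict monotonicity of $(\gamma_n)_n$ forces $\alpha^*$ to be a limit ordinal with $\gamma_n<\alpha^*$ for every $n$, and this is precisely where the hypothesis is used. For the inequality $\ord_\bB(X)\geqslant\alpha^*$ I would run the analogous induction $\iota_n^{**}((X_n)^{**}_\gamma)\subseteq X^{**}_\gamma$ (again from weak*-to-weak* continuity and $\iota_n^{**}|_{X_n}=\iota_n$): for $\beta<\alpha^*$ choose $n$ with $\gamma_n>\beta$ and $w\in(X_n)^{**}_{\beta+1}\setminus(X_n)^{**}_\beta$; then $\iota_n^{**}(w)\in X^{**}_{\beta+1}$, whereas $\iota_n^{**}(w)\in X^{**}_\beta$ would give, via the first assertion, $w=\pi_n^{**}\iota_n^{**}(w)\in(X_n)^{**}_\beta$, a contradiction; hence $X^{**}_\beta\subsetneq X^{**}_{\beta+1}$ for every $\beta<\alpha^*$.

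For the reverse inequality $\ord_\bB(X)\leqslant\alpha^*$ it suffices to check $X^{**}_{\alpha^*+1}\subseteq X^{**}_{\alpha^*}$. Fix $x^{**}\in X^{**}_{\alpha^*+1}$; by the first assertion and the standard stabilization fact (once $Z^{**}_\gamma=Z^{**}_{\gamma+1}$ the Baire classes of $Z$ are constant from $\gamma$ onward), $x^{**}=\sum_n\iota_n^{**}(\pi_n^{**}x^{**})$ is a norm-convergent series with $\pi_n^{**}x^{**}\in(X_n)^{**}_{\gamma_n}$. Since $\gamma_n<\alpha^*$ and $\iota_n^{**}((X_n)^{**}_{\gamma_n})\subseteq X^{**}_{\gamma_n}$, each partial sum lies in the linear subspace $\bigcup_{\beta<\alpha^*}X^{**}_\beta$; as these partial sums converge to $x^{**}$ in norm and hence weak*, $x^{**}$ is a weak* limit of a sequence from $\bigcup_{\beta<\alpha^*}X^{**}_\beta$, so $x^{**}\in X^{**}_{\alpha^*}$. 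Combining the two inequalities gives $\ord_\bB(X)=\alpha^*=\sup_n\ord_\bB(X_n)$.
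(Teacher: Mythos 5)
Your proposal is correct and follows essentially the same route as the paper: the coordinate decomposition $x^{**}=\sum_n\pi_n^{**}x^{**}$ with absolute convergence obtained by testing against finitely supported norm-one functionals, the identification of the sum with $x^{**}$ via Lemma~\ref{wuc} applied to the weakly unconditionally Cauchy series $\sum_n\pi_n^*(\psi_n)$, the transfinite induction on weak*-to-weak* continuity of the projections, and the norm (hence weak*) convergence of the partial sums $\sum_{n\leqslant N}\iota_n^{**}\pi_n^{**}x^{**}$ for the upper bound on the order. Your treatment of the ``in particular'' clause is somewhat more detailed than the paper's (which states the two inequalities in one line each), but the underlying argument is identical.
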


\begin{proof}
    Fix $\alpha \leqslant \omega_1$ and pick $y^{**} \in X^{**}_\alpha$. Denote by $\pi_n:X \rightarrow X_n$, $n \in \N$, the canonical projections. We will show that $y^{**} = \sum_{n} \pi_n^{**} y^{**}$, with the series converging absolutely. Then $y^{**} \in (\bigoplus (X_n)^{**}_\alpha)_{\ell_1}$ as  $\pi_n^{**} y^{**} \in (X_n)^{**}_\alpha$, $n\in\N$, by weak*-weak* continuity of $\pi_n^{**}$.

    To see that $\sum_{n} \pi_n^{**} y^{**}$ converges absolutely, note that for any $n \in \N$, $\pi_n^{**} y^{**} \in (X_n)^{**}_\alpha\subset X_n^{**}$ and hence we can find $x_n^* \in X_n^{*}$ with $\norm{x_n^*} = 1$ and $\pi_n^{**} y^{**} (x_n^*) > \norm{\pi_n^{**} y^{**}} - 2^{-n}$. As $X^* = (\bigoplus X_n^*)_{\ell_\infty}$, any functional of the form $y_N^* = (x_1^*,\dots,x_N^*,0,\dots) \in X^*$, $N\in\N$, satisfies $\norm{y_N^*} = 1$. Thus
    \begin{align*}
        \sum_{n=1}^N \norm{\pi_n^{**} y^{**}} < \sum_{n=1}^N \pi_n^{**} y^{**}(x_n^*) + 2^{-n} < 1 + y^{**}(y_N^*) \leqslant 1 + \norm{y^{**}}.
    \end{align*}
    Hence, $\sum_{n} \pi_n^{**} y^{**}$ converges absolutely as required.

    What is left to show is that $\sum_{n} \pi_n^{**} y^{**}$ converges to $y^{**}$. Note that for any $x^* \in X^*$ we have that $x^* = \sum_n \pi_n^* x^*$, where the series on the right hand side is weakly unconditionally Cauchy. Indeed, for any $x \in X$ we have $\sum_n \norm{\pi_n x} = \norm{x}$ by the definition of the norm in $X$, hence
    \begin{align*}
        \sum_{n=1}^\infty |\pi_n^* x^* (x)| \leqslant \sum_{n=1}^\infty \norm{x^*} \norm{\pi_nx} \leqslant \norm{x^*} \norm{x}.
    \end{align*}
    Therefore, by Lemma \ref{wuc} we get $y^{**}(x^*) = \sum_n y^{**}(\pi_n^* x^*) = \sum_n \pi_n^{**} y^{**}(x^*)$ for any $x^*\in X^*$, which proves the required inclusion.

Now assume $(\ord_\bB(X_n))_n$ strictly increases. By the inclusion proved above, any $x^{**}\in X^{**}_{\omega_1}$ is a limit in norm of the sequence $((P_1^{**}+\dots+P_n^{**})x^{**})_n$, thus $\ord_\bB(X)\leqslant\sup_n\ord_\bB(X_n)$.
On other hand, by weak*-weak* continuity of each $P_n^{**}$, $\ord_\bB(X)\geqslant\ord_\bB(X_n)$ for all $n\in\N$, which ends the proof. 
\end{proof}

\begin{remark}
    A slight modification of the proof above shows that the equality $\operatorname{ord}_{\bB}(X) = \sup_n \operatorname{ord}_{\bB}(X_n)$ from Proposition \ref{Prop-Order-Baire1-Sum} holds true provided that $\sup_n\ord_\bB(X_n)$ is not attained in infinitely many $n \in \N$. If the supremum is attained in infinitely many $n \in \N$, the last paragraph of the proof of Proposition \ref{Prop-Order-Baire1-Sum} gives us only that $\ord_\bB(X) \leqslant \sup_n \ord_\bB(X_n) + 1$. The equality is also valid under the following condition: There is a constant $C = C(\alpha) > 0$ such that $\bigcup_{\beta < \alpha} (X_n)^{**}_\beta \cap B_{X^{**}}$ is weak$^*$ sequentially dense in $(X_n)^{**}_\alpha \cap C B_{X^{**}}$ for every $n \in \N$, where $\alpha = \sup_n \ord_\bB (X_n)$. The constant corresponding to $\alpha = 1$ is $C(1) = 1$ for any choice of Banach spaces $(X_n)_{n}$ (\cite[Theorem II.1.2(a)]{ArgyrosGodefroyRosenthal2003}). We do not know if $C(\alpha)$ exists for $\alpha > 1$.
\end{remark}

\subsection{Lindenstrauss spaces}\label{section_lindenstrauss}

Recall that a Banach space $X$ is a \textbf{Lindenstrauss space} if the dual $X^*$ is isometric to $L_1(\mu)$ for some measure $\mu$.

\begin{theorem}
Let $X$ be an infinite dimensional separable real Lindenstrauss space. Then $\ord_\bB (X) \in \{1,\omega_1\}$.
\end{theorem}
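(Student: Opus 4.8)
The plan is to dichotomise according to whether the dual $X^{*}$ is separable, treating the two cases by completely different tools. Two preliminary remarks. Since $X$ is separable, $K = (B_{X^{*}},w^{*})$ is a metrizable compact space. And $X$ is non-reflexive: indeed $X^{*}\cong L_{1}(\mu)$ is an infinite-dimensional $L_{1}$-space, so it contains an isometric copy of $\ell_{1}$ and is non-reflexive, hence so is $X$. Thus $X^{**}_{0}=X\subsetneq X^{**}$, so $\ord_{\bB}(X)\geqslant 1$; it remains to show that $\ord_{\bB}(X)=1$ when $X^{*}$ is separable and $\ord_{\bB}(X)=\omega_{1}$ otherwise.

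Suppose first that $X^{*}$ is separable. Then $X$ contains no isomorphic copy of $\ell_{1}$: such a copy would provide a quotient map of $X^{*}$ onto $\ell_{1}^{*}=\ell_{\infty}$, forcing $X^{*}$ to be non-separable. By the Odell--Rosenthal theorem recalled in the introduction, $X^{**}_{1}=X^{**}$, hence $X^{**}_{\alpha}=X^{**}$ for all $\alpha\geqslant 1$, while $X^{**}_{0}=X\subsetneq X^{**}$. Therefore $\ord_{\bB}(X)=1$.

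Suppose now that $X^{*}$ is non-separable; the goal is $X^{**}_{\alpha}\subsetneq X^{**}_{\alpha+1}$ for every $\alpha<\omega_{1}$. The first ingredient, valid for separable $L_{1}$-preduals and to be extracted from \cite{LudvikSpurny2014}, is the identification of the intrinsic Baire class $X^{**}_{\alpha}$ with the space of strongly affine bounded Baire-$\alpha$ functions on $K$. The second ingredient is that, under our hypothesis, this hierarchy of strongly affine Baire classes is strictly increasing. To see the latter I would use the $L_{1}$-structure: non-separability of $X^{*}\cong L_{1}(\mu)$ forces $\mu$ to have a non-atomic part --- otherwise $X^{*}\cong\ell_{1}(\Gamma)$, and weak$^{*}$ metrizability of $B_{X^{*}}$ forces $\Gamma$ to be countable, contradicting non-separability. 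A non-atomic positive element of $X^{*}$ of norm $1$ then gives rise, through the $L_{1}$-order, to a weak$^{*}$-compact convex subset of $K$ affinely homeomorphic to $P(L)$, the simplex of probability measures on some uncountable metrizable compact space $L$; the affine Baire-$\alpha$ functions on $P(L)$ coincide with the bounded Baire-$\alpha$ functions on its extreme boundary $L$, and these form a strictly increasing chain $\big(\mathcal{B}_{\alpha}(L)\big)_{\alpha<\omega_{1}}$ by the classical descriptive set theory. Transferring this chain (composition with the barycentre map, followed by a strongly affine Baire-$\alpha$ extension from that subset to all of $K$, and then the identification above) embeds it into $\big(X^{**}_{\alpha}\big)_{\alpha<\omega_{1}}$. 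Hence $X^{**}_{\alpha}\subsetneq X^{**}_{\alpha+1}$ for every $\alpha<\omega_{1}$, so $\ord_{\bB}(X)=\omega_{1}$.

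I expect the non-separable case to be the crux, and within it the genuinely hard input is the identification $X^{**}_{\alpha}=\{\text{strongly affine bounded Baire-}\alpha\text{ functions on }K\}$ for $L_{1}$-preduals. For general Banach spaces even the one-sided inclusion $X^{**}_{\alpha}\subseteq\{\text{affine Baire-}\alpha\}$ can be strict, by Talagrand \cite{Talagrand1984} (see also \cite{Spurny2010}), so the reverse inclusion --- which is exactly what makes the dichotomy hold for Lindenstrauss spaces, and which is the substantive content drawn from \cite{LudvikSpurny2014} --- cannot be taken for granted. The secondary, more technical, point is realising the copy of $P(L)$ inside $K$ from the non-atomic part of $\mu$ and carrying out the strongly affine extension compatibly with the weak$^{*}$ topology; this step genuinely uses that $X$ is an $L_{1}$-predual, and not merely that $X^{**}$ contains a copy of $L_{\infty}([0,1])$.
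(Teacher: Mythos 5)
Your Case 1 is correct and is essentially the paper's own argument (the paper runs the dichotomy on countability of $\operatorname{ext}B_{X^*}$ rather than separability of $X^*$, but for separable Lindenstrauss spaces these coincide). Case 2, however, has two genuine problems. The first is your ``Ingredient 1'': the identification of $X^{**}_\alpha$ with the strongly affine bounded Baire-$\alpha$ functions on $K=B_{X^*}$ is not what \cite{LudvikSpurny2014} provides, and the closely related claim $X^{**}_\alpha=X^{**}_{\bB_\alpha}$ for $\mathcal{L}_\infty$-spaces is exactly the assertion from \cite[p.~1047]{ArgyrosGodefroyRosenthal2003} that the introduction of this paper flags as refuted by \cite[Theorem 1.2]{Spurny2010}. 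What Ludv\'ik--Spurn\'y actually prove, and what the paper uses, is a statement about the \emph{extreme boundary}: every \emph{odd} bounded Baire-$\alpha$ function on $\operatorname{ext}B_{X^*}$ extends uniquely to an element of $X^{**}_{1+\alpha}$ (note the shift). Combined with the trivial restriction direction this still yields $\ord_\bB(X)=\omega_1$, but the tool lives on $\operatorname{ext}B_{X^*}$, not on $K$.

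The second problem is more serious: your source of descriptive complexity is in the wrong part of $L_1(\mu)$. The set $\{f\in L_1(\mu):f\geqslant 0,\ f\ll\nu,\ \|f\|_1=1\}$ attached to a non-atomic positive $\nu$ is a norm-closed bounded convex set with \emph{no} extreme points and is not weak*-compact (already in $M([0,1])$ the absolutely continuous probability measures are weak*-dense, not weak*-closed, in $P([0,1])$), so it cannot be affinely homeomorphic to a Bauer simplex $P(L)$; the copies of $P(L)$ inside $B_{X^*}$ have the \emph{atoms} of $\mu$ as their extreme points. Since $\operatorname{ext}B_{L_1(\mu)}$ consists precisely of the normalized indicators of atoms, the non-atomic part contributes nothing to the extreme boundary on which the extension theorem operates. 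The workable route (the paper's) is the opposite one: non-separability of $X^*$ forces $\operatorname{ext}B_{X^*}$ to be uncountable; being a $G_\delta$ in the Polish space $B_{X^*}$, it is Polish and contains a perfect set; and one then constructs directly on $\operatorname{ext}B_{X^*}$ an \emph{odd} function in $\bB_\alpha\setminus\bigcup_{\beta<\alpha}\bB_\beta$ for each $\alpha<\omega_1$. The oddness is a genuine constraint (bidual elements are linear, hence odd) that your sketch never addresses; the paper handles it by choosing a maximal open $U$ with $U\cap -U=\varnothing$ inside a symmetric perfect set and gluing $g$ on $U$ with $-g(-\cdot)$ on $-U$.
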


\begin{proof} We consider two cases. 

\textbf{Case 1}. The set $\operatorname{ext} B_{X^*}$ of extreme points of its dual ball is countable. Then for every $x^{**} \in X^{**}_{\omega_1}$, the restriction $x^{**}\restrict{\operatorname{ext} B_{X^*}}$ is a bounded Baire-1 function. It follows from \cite[Theorem 2.7(b)]{LudvikSpurny2014} (as every countable set is clearly $F_\sigma$), that $x^{**}\restrict{\operatorname{ext}B_{X^*}}$ can be extended to an element of $X^{**}_1$, and by \cite[Proposition 2.3]{LudvikSpurny2014}, this extension is necessarily $x^{**}$. Hence, $x^{**} \in X^{**}_1$ and $\ord_\bB (X) \leqslant 1$. Alternatively, countability of $\operatorname{ext} B_{X^*}$ implies that $X^*$ is separable (see \cite[Theorem 3.122]{FabianEtAl}), and thus $X^{**} = X^{**}_1$ by the Goldstine's theorem and metrizability of $(B_{X^{**}},w^*)$. As no infinite dimensional Lindestrauss space is reflexive, $X \subsetneq X_1^{**} = X^{**}$ and thus $\ord_\mathcal{B}(X)=1$.
  
\textbf{Case 2}. The set $\operatorname{ext} B_{X^*}$ is uncountable. We claim that for any $\alpha < \omega_1$ there is an odd function $h \in \bB_\alpha(\operatorname{ext} B_{X^*}) \setminus \bigcup_{\beta<\alpha} \bB_\beta(\operatorname{ext} B_{X^*})$. Then by \cite[Theorem 2.7(a)]{LudvikSpurny2014}, $h$ can be extended to some $x^{**} \in X^{**}_{1+\alpha}$, which  clearly cannot be an element of $\bigcup_{\beta < \alpha} X^{**}_\beta$, as otherwise its restriction $h$ would be in $\bigcup_{\beta<\alpha} \bB_\beta(\operatorname{ext} B_{X^*})$. Hence $\ord_\bB(X) \geqslant \alpha$ for any  $\alpha < \omega_1$, thus  $\ord_\bB(X) = \omega_1$.

We prove now that,  
given $\alpha < \omega_1$, there is an odd function $h \in \bB_\alpha(\operatorname{ext} B_{X^*}) \setminus \bigcup_{\beta<\alpha} \bB_\beta(\operatorname{ext} B_{X^*})$.  
We can suppose that $\alpha > 0$. Note first that $\operatorname{ext} B_{X^*}$ is a $G_\delta$-subset of the Polish space $B_{X^*}$ as $X$ is separable (see \cite[Corollary I.4.4]{Alfsen}), and thus $\operatorname{ext} B_{X^*}$ is itself Polish. Hence, $\operatorname{ext} B_{X^*}$ contains a perfect subset $S$. Further, $S \cup -S$ is clearly also perfect. By Zorn's lemma there is a maximal open set $U \subset S\cup-S$ with $U \cap -U = \varnothing$. 

We claim that $U \cup -U$ is dense in $S\cup -S$. Assume otherwise, then there is $x^* \in S \cup -S$ and an open absolutely convex neighborhood $V$ of $0$, such that $x^* \notin V$ and $(x^*+V)\cap(\overline{U}\cup-\overline{U})=\varnothing$. Put $U_1:=U\cup(x^*+V)\cap(S\cup-S)$. We will show that $U_1 \cap -U_1 = \varnothing$. Suppose, towards a contradiction, that there exists $y^* \in U_1 \cap -U_1$. As $U \cap -U = \varnothing$, we can without loss of generality assume that $y^* \notin U$. Then $y^{*}\in (x^*+V)\cap(S\cup-S)$, and hence $y^{*}\notin U\cup-\overline{U}$. It follows that $y^*\notin -U$, which implies $y^*\in -x^*-V=-x^*+V$, and consequently $0\in 2x^*+V+V=2x^*+2V$, which contradicts  the fact that $x^* \notin V$. Thus $U_1\cap -U_1=\varnothing$ which in turn contradicts maximality of $U$ as $x^*\in U_1$ and $x^*\notin U$. 

Observe that $U$ is uncountable  (otherwise $U\cup-U$ would be a countable dense open subset of a perfect space -- a contradiction). As an open subset of Polish space, $U$ is Polish as well, and hence there is $g\in\mathcal{B}_\alpha(U)\setminus\bigcup_{\beta<\alpha}\mathcal{B}_\beta(U)$. Now we can define $h$ as follows.
  \[ h(x^*):=\begin{cases} 
      0 & x^*\in\operatorname{ext}B_X^*\setminus(U\cup-U) \\
      g(x^*) & x^*\in U \\
      -g(-x^*) & x^*\in -U.  
   \end{cases}
  \]
  Then $h$ is clearly odd, $h \in \bB_\alpha( \operatorname{ext} B_{X^*})$ and $h \notin \bigcup_{\beta < \alpha} \bB_{\beta} (\operatorname{ext} B_{X^*})$, as its restriction $g$ satisfies $g \notin \bigcup_{\beta < \alpha} \bB_{\beta} (U)$.  
\end{proof}

\section{The proof of Theorem \ref{Theorem-Construction}}

The section is devoted to the construction and analysis of the space $Y$ of Theorem \ref{Theorem-Construction}. We start with the Azimi-Hagler space, a "building brick" for the construction of the space $Y$.

By an \textbf{interval} in $\N$ we mean an intersection of any interval in $\R$ (either finite or infinite) with $\N$. We call a sequence (finite or infinite) of intervals $(I_n)_n$ \textbf{successive}, provided $\max I_n<\min I_{n+1}$ for any $n \in \N$.

\subsection{Azimi-Hagler space}\label{section_AH_space}

In this subsection we recall the construction of the Azimi-Hagler space \cite{AzimiHagler1986} and some of its properties.

\begin{definition} \label{Definition-AHSpace} \;
    \begin{enumerate}[(i)]
        \item If $I$ is an interval and $x \in c_{00}$, we set $S_I(x) = \sum_{i \in I} x(i)$.
        \item For $x \in c_{00}$ we define 
            \begin{align*}
                \norm{x}_{AH} = \sup \left\{ \sum_{j=1}^n \tfrac{1}{j} |S_{I_j}(x)|: \; (I_1,\dots,I_n) \text{ successive sequence of intervals} \right\}.
            \end{align*}
    \end{enumerate}
We define $X_{AH}$ to be the completion of $c_{00}$ under this norm and call $X_{AH}$ the \textbf{Azimi-Hagler space}.
\end{definition}

We note that in \cite{AzimiHagler1986}, the space $X_{AH}$ was constructed using a sequence $(\alpha_j)_{j}$ of non-negative numbers that satisfy the following properties:
\begin{enumerate}
    \item $\alpha_1 = 1$, $(\alpha_j)_{j}$ is nonincreasing;
    \item $\lim_{j} \alpha_j = 0$;
    \item $\sum_{j} \alpha_j = \infty$;
\end{enumerate}
as weighting factors in the definition of the norm in (ii) of Definition \ref{Definition-AHSpace}. For the purposes of the paper, we consider only the sequence $(\alpha_j)_{j} = (\frac{1}{j})_{j}$, but the results remain true for any other sequence $(\alpha_j)_{j}$ that satisfies the points above.

Note that the norm on $X_{AH}$ is a jamesification (see \cite{BellenotHaydonOdell89}) of a Garling space \cite{Garling1968} with weights $w = (\tfrac{1}{j})_{j}$ (see \cite{AlbiacEtAl2018} for an overview of Garling spaces). 

In the next theorem we list some known properties of the Azimi-Hagler space that  will be needed in the construction of the space $Y$ in the proof of Theorem \ref{Theorem-Construction}.

\begin{theorem} [\protect{\cite[Theorem 1]{AzimiHagler1986}}] \label{Theorem-AHProperties}
    The following is true:
    \begin{enumerate}
        \item The canonical unit basis $(e_n)_{n}$ is a normalised bimonotone spreading boundedly complete Schauder basis of $X_{AH}$;
        \item The canonical unit basis $(e_n)_{n}$ is a nontrivial weak Cauchy sequence, we denote by $\theta \in (X_{AH})^{**}_1$ its weak* limit.
        \item If $x^{**} \in (X_{AH})^{**}_1$ is a Baire-1 element of $X_{AH}^{**}$, then $x^{**} = x + a \theta$ for some $x \in X_{AH}$ and $a \in \R$. That is, $X_{AH}$ is of codimension 1 in $(X_{AH})^{**}_1$.
        \item $X_{AH}$ is $\ell_1$-saturated.
    \end{enumerate}
\end{theorem}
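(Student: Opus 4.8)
Since all four statements go back to \cite[Theorem 1]{AzimiHagler1986}, the plan is to recall the relevant arguments, with particular care for the codimension-one property (3), which will be used repeatedly below.

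I would begin with (1), where everything follows from the definition of $\norm{\cdot}_{AH}$. The one-term admissible sequence $(I)$ gives $|S_I(x)|\leqslant\norm{x}_{AH}$ for every interval $I$ and $x\in c_{00}$, so each $S_I$ — in particular $g_m:=S_{[m,\infty)}$ and $e_n^*=g_n-g_{n+1}$ — extends to a norm-one functional on $X_{AH}$. Next, intersecting an admissible sequence $(I_1,\dots,I_n)$ with an interval $(k,l]\subseteq\N$ yields an admissible sequence again once empty terms are deleted and indices compressed to consecutive integers (which only enlarges the weights $\tfrac1j$); hence $\norm{(P_l-P_k)x}_{AH}\leqslant\norm{x}_{AH}$. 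Taking $k=0$ shows the partial-sum projections are contractions, so $(e_n)$ is a Schauder basis of $X_{AH}$, and the general case gives bimonotonicity; clearly $\norm{e_n}_{AH}=\norm{e_n^*}=1$. For spreadingness, for $n_1<\dots<n_k$ the admissible sequences of intervals, intersected with $\{n_1,\dots,n_k\}$, correspond value by value to admissible sequences of intervals in $\{1,\dots,k\}$, so $\norm{\sum_ia_ie_{n_i}}_{AH}=\norm{\sum_ia_ie_i}_{AH}$. Bounded completeness I would quote from \cite[Theorem 1]{AzimiHagler1986}.

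For (2), the key computation is that every interval sum $S_I\big(\sum_{i=1}^k(-1)^ie_i\big)$ lies in $\{-1,0,1\}$, so the sequence of singletons gives $\norm{\sum_{i=1}^k(-1)^ie_i}_{AH}=\sum_{j=1}^k\tfrac1j$; with spreadingness this shows no subsequence of $(e_n)$ is equivalent to the $\ell_1$ basis, so by Rosenthal's $\ell_1$ theorem (see e.g. \cite{AlbiacKalton}) and spreadingness $(e_n)$ is weakly Cauchy. Its weak$^*$ limit $\theta\in X_{AH}^{**}$ then lies in $(X_{AH})^{**}_1$, being a weak$^*$ limit of a sequence from $X_{AH}=(X_{AH})^{**}_0$. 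Finally $g_m(\theta)=\lim_ng_m(e_n)=1$ for all $m$, whereas $g_m(x)=S_{[m,\infty)}(x)=g_m(x-P_{m-1}x)\to0$ for each fixed $x\in X_{AH}$ (because $\norm{x-P_{m-1}x}_{AH}\to0$); hence $\theta\notin X_{AH}$ and $(e_n)$ is a nontrivial weak Cauchy sequence.

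Part (3) is where I expect the real work. The inclusion $X_{AH}\oplus\R\theta\subseteq(X_{AH})^{**}_1$ is clear (and $(X_{AH})^{**}_1$ is a linear subspace). For the converse I would first peel off a "James-like" vector: given $x^{**}\in X_{AH}^{**}$, set $c_m:=x^{**}(e_m^*)$, and note that for any admissible $(I_1,\dots,I_n)$ with $I_j\subseteq[1,N]$ one has $\norm{\sum_j\tfrac{\eps_j}{j}S_{I_j}}_{X_{AH}^*}\leqslant1$ for arbitrary signs, so for a suitable choice of signs
\begin{align*}
\sum_{j=1}^n\tfrac1j\,\Big|S_{I_j}\Big(\sum_{m\leqslant N}c_me_m\Big)\Big|
&=\sum_{j=1}^n\tfrac1j\,\big|x^{**}(S_{I_j})\big|\\
&=x^{**}\Big(\sum_{j=1}^n\tfrac{\eps_j}{j}S_{I_j}\Big)\leqslant\norm{x^{**}}.
\end{align*}
Taking the supremum over admissible sequences gives $\norm{\sum_{m\leqslant N}c_me_m}_{AH}\leqslant\norm{x^{**}}$ for all $N$, hence by bounded completeness $x:=\sum_mc_me_m\in X_{AH}$ and $x^{**}-x$ annihilates every $e_m^*$. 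It then remains to show that $\eta\in(X_{AH})^{**}_1$ with $\eta(e_m^*)=0$ for all $m$ forces $\eta=a\theta$, where $a:=\eta(g_1)$. Writing $\eta$ as the weak$^*$ limit of $(z_n)\subseteq X_{AH}$, the hypotheses say $(z_n)$ is bounded with $z_n(m)\to0$ and $S_{[m,\infty)}(z_n)\to a$ for every $m$; replacing $z_n$ by $z_n-ae_n$ reduces to $a=0$, and the remaining point — that a bounded, coordinatewise null sequence in $X_{AH}$ with $S_{[m,\infty)}(z_n)\to0$ for every $m$ is weakly null — is the technical core of \cite{AzimiHagler1986}. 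The proof there passes, via a gliding hump, to a block basis whose blocks have total sum tending to $0$ and then estimates directly using the weights $(\tfrac1j)_j$; this is the step I expect to be the main obstacle.

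Finally, (4) is the principal theorem of \cite{AzimiHagler1986}, and I would simply cite it, indicating the strategy: every infinite-dimensional closed subspace of $X_{AH}$ contains a small perturbation of a normalised block basis $(u_k)$ of $(e_n)$, but one cannot stop there, since $(e_{n_k})$ is itself such a block basis and — being spreading and, by the computation in (2), not equivalent to the $\ell_1$ basis — has no $\ell_1$-subsequence. Instead one shows that, after passing to a subsequence, suitable finite signed sums of the $u_k$ over long successive blocks of indices, with signs and lengths chosen so that the harmonic weights forced by the norm do not decay too fast across the sum, span a copy of $\ell_1$ with a uniform constant. As in (3), the specific weights $(\tfrac1j)_j$ are what make this work.
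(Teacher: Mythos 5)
The paper offers no proof of this theorem: it is quoted verbatim from Azimi and Hagler, so the only ``official'' argument is the citation of \cite[Theorem 1]{AzimiHagler1986}, and your proposal is essentially an expansion of that citation. Your verifications for (1) and the norm identity $\norm{\sum_{i=1}^k(-1)^ie_i}_{AH}=\sum_{j=1}^k\tfrac1j$ in (2) are correct (though the upgrade from ``some subsequence of $(e_n)$ is weak Cauchy'', which is what Rosenthal's theorem gives, to ``$(e_n)$ itself is weak Cauchy'' silently uses the standard trichotomy for spreading sequences and deserves a reference). The first half of (3) is also correct: the uniform bound $\norm{\sum_{m\leqslant N}x^{**}(e_m^*)e_m}_{AH}\leqslant\norm{x^{**}}$ via the norm-one functionals $\sum_j\tfrac{\eps_j}{j}S_{I_j}$, combined with bounded completeness, properly peels off the $X_{AH}$-component of an arbitrary $x^{**}$.

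However, the ``remaining point'' you isolate at the end of (3) is false as stated. A bounded, coordinatewise null sequence $(z_n)\subset X_{AH}$ with $S_I(z_n)\to0$ for every interval $I$ need not be weakly null: take successive blocks $B_n$ of length $L_n=2^{2^n}$ and set $z_n=H_{L_n}^{-1}\sum_{i\in B_n}(-1)^ie_i$, where $H_L=\sum_{j\leqslant L}1/j$. Each $z_n$ is normalised by your computation in (2) together with spreading invariance, and every interval sum satisfies $|S_I(z_n)|\leqslant H_{L_n}^{-1}\to0$; yet evaluating $\norm{\sum_na_nz_n}_{AH}$ with all singleton intervals gives $\sum_n|a_n|\,(H_{M_n}-H_{M_{n-1}})/H_{L_n}$ with $M_n=L_1+\dots+L_n$, and the doubly exponential growth of $L_n$ keeps each ratio bounded below, so $(z_n)$ is equivalent to the unit vector basis of $\ell_1$ --- this is essentially how Azimi and Hagler produce copies of $\ell_1$ in the first place. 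The hypothesis you must retain is that $(z_n)$ is weak Cauchy (which it is in your setting, being weak* convergent to $\eta-a\theta$); that assumption is exactly what excludes $\ell_1$-behaviour and is the content of the Azimi--Hagler argument, not a dispensable technicality. With that correction your reduction is the right one, and deferring its proof, together with bounded completeness and item (4), to \cite{AzimiHagler1986} matches what the paper itself does.
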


For more information about the Azimi-Hagler space, see \cite{Andrew1987,AzimiHagler1986,LedariParvaneh2013}. For an analogue of the Azimi-Hagler spaces for $p > 1$, see \cite{Azimi2002,AzimiLedari2006}.

\subsection{The space $Y$ and its properties}\label{section_def_Y} From now on we fix a Banach space $X$ with a basis  $(x_n)_{n=1}^\infty$,  and denote by $(x_n^*)_{n=1}^\infty \subset X^*$ the biorthogonal functionals. Renorming, if necessary, we can assume that $(x_n)_n$ is bimonotone and normalized. 

We aim at an infinite direct sum of isometric copies $Z_n$, $n\in\N$, of the Azimi-Hagler space, which allows to place the basis $(x_n)_n$ on the sequence $(\theta_n)_n$ formed by weak* limits of the canonical bases of $Z_n$'s and preserve the Baire class structure of $X$ inside $Y^{**}$. We define $Y$ as the completion of $c_{00}(\Lambda)$, with $\Lambda$ being the lower triangular part of $\N^2$, endowed with the norm given by its norming set $\mathcal{S}$ of functionals. The structure of Azimi-Hagler space and $X$, respectively, is planted in $Y$ by means of $AH$-functionals and $X$-functionals, defined below. The structure of $X$, present in $Y$ only on suitably chosen finite basic sequences, is inherited on the sequence of weak* limits $\theta_n$'s of the canonical bases of $Z_n$'s. Further, in order to incorporate Baire classes of $X$ into Baire classes of $Y$ of desired order we need to eliminate the possibility of approaching Baire functionals of $X$ planted in $Y^{**}$ by sequences of Baire functionals in $Y^{**}$ other than those coming from the Baire class structure of $X$.  Note that such reduction of Baire classes might happen e.g. in the case of $X$ with an unconditional basis and $Y$ being a direct sum $\big(\bigoplus Z_n\big)_X$. To avoid such situation, we close the norming set of the norm of $Y$ under $c_0$-sums of  functionals with supports forming a diagonal sequence (see Definition \ref{Def_Y}), which in particular makes any seminormalized sequence of vectors with diagonal sequence of supports in $Y$ equivalent to the unit vector basis of $\ell_1$ and allows $X$ to be only finitely represented in $Y$. The full description of Baire classes of $Y$ (see Sections \ref{section_baire_1_Y}, \ref{section_baire_Y}) relies heavily on the fact that $Y$ is a dual space (thanks to Azimi-Hagler space being a dual space) combined with general facts of Section \ref{Section_general} applied to the norming set $\mathcal{S}$ of $Y$ defined below. The  observations critical for showing that $Y$ meets the requirements in Theorem \ref{Theorem-Construction} are contained in the key Lemmas \ref{Lemma-wuc-Y}, \ref{property of Y_1**} and \ref{Lemma-Dixmier-weaklyCauchy}.

Recall that every functional $x^*\in X^*$ is uniquely represented as a weak* convergent series $x^*=\sum_{n=1}^\infty x^*(x_n)x_n^*$. Let
\[K:=\Big\{(x^*(x_n))_{n=1}^\infty\in\R^\N : x^*\in B_{X^*}\Big\}.\]

Let $\Lambda=\{(m,n)\in\N^2: m\geqslant n\}$. Given $F\subset \N^2$ denote by $\chi_F: \N^2 \to \{0,1\}$ its characteristic function. For any functional $s: c_{00}(\Lambda)\to \R$, we set $\chi_F\cdot s: c_{00}(\Lambda) \rightarrow \R$ the linear extension of the map $e_{m,n} \mapsto \chi_F(m,n)s(e_{m,n})$, $(m,n) \in \Lambda$. Then $\chi_f \cdot s$ is also a functional on $c_{00}(\Lambda)$ and its support is contained in $F \cap \Lambda$.

We call $F\subset\Lambda$ a \textbf{$\Lambda$-rectangle} if $F= (I\times J)\cap \Lambda$ for some (finite or infinite) intervals $I,J\subset \N$, that is, if $F$ is an intersection of a rectangle in $\N^2$ with $\Lambda$. We shall denote the unit vectors in $c_{00}(\Lambda)$ by $(e_{m,n})_{(m,n) \in \Lambda}$. Given a functional $s: c_{00}(\Lambda)\to \R$ we define the \textbf{support} of $s$ by the formula $\supp s=\{(m,n)\in\Lambda: s(e_{m,n})\neq 0\}$ and the \textbf{range} of $s$ ($\range s$) as the smallest $\Lambda$-rectangle containing $\supp s$. 

We say that a sequence of $(F_n)_n$ of subsets of $\Lambda$ is 
\textbf{diagonal}, provided for some successive sequences of intervals $(I_n)_n$, $(J_n)_n$ in $\N$ we have $F_n\subset I_n\times J_n$ for each $n \in \N$.

\begin{notation}
Given any $F\subset\N^2$ we define the \textbf{summing functional} $S_F$ over $F$ 
 on $c_{00}(\Lambda)$ as
\[S_F: c_{00}(\Lambda)\ni x=\sum_{(m,n)\in\Lambda}x_{m,n}e_{m,n}\mapsto \sum_{(m,n)\in F\cap\Lambda}x_{m,n}\in\R.\]
Given a sequence $\mathcal{F}=(F_j)_j$ of pairwise disjoint subsets of $\N^2$ and $\mathbf{a}=(a_j)_j\subset \R$ define a functional on $c_{00}(\Lambda)$ of the form
\[S_\mathcal{F}^\mathbf{a}: c_{00}(\Lambda)\ni x\mapsto \sum_{j=1}^\infty a_jS_{F_j}(x)\in\R.\]
Any functional $S_\mathcal{F}^\mathbf{a}$ with $\mathcal{F}=(I\times \{j\})_j$ for some  interval $I\subset\N$ and $\mathbf{a}\in K$, is called an \textbf{$X$-functional}. In case of infinite $I$, we call such $S_\mathcal{F}^\mathbf{a}$ a \textbf{$SIX$-functional} (a segment-infinite $X$-functional). 

Any functional $S_\mathcal{F}^\mathbf{a}$ with $\mathcal{F}=(I_j\times \{k\})_j$ for some successive intervals $I_1<I_2<\dots$ in $\N$,  $k\in\N$, and $\mathbf{a}=(\tfrac{\varepsilon_j}{j+j_0})_j$ with each $\varepsilon_j=\pm 1$ and $j_0\in\N\cup\{0\}$, is called an \textbf{$AH$-functional}.
\end{notation}

\begin{definition}{\label{Def_Y}}    
Let $\mathcal{S}$ be the set of all functionals $s$ of the form $s=\sum_ns_n$, where each $s_n$ is either an $AH$-functional or an $X$-functional, and the sequence of $\Lambda$-rectangles  $(\range s_n)_n$ is diagonal. 

The set $\mathcal{S}$ defines a norm $\|\cdot\|$ on $c_{00}(\Lambda)$ as its norming set, i.e. 
\[\|y\|:=\sup_{s\in\mathcal{S}}s(y), \text{ for any } y\in c_{00}(\Lambda).\]
We define the space $Y$ as the completion of $(c_{00}(\Lambda), \|\cdot\|)$. 
\end{definition}
We gather properties of the norming set $\mathcal{S}$ in the following lemma.
\begin{lemma}\label{structure of S}
\begin{enumerate}
    \item For any $(s_n)_n\subset \mathcal{S}$ with $(\range s_n)_n$ a diagonal sequence, also $\sum_ns_n\in \mathcal{S}$.    
    \item For any diagonal sequence of $\Lambda$-rectangles $\mathcal{F}=(F_n)_n$ and $s\in\mathcal{S}$, also $\chi_F\cdot s\in\mathcal{S}$, where $F=\bigcup\mathcal{F}$.
    \item Fix $s\in\mathcal{S}$. If $\supp s$  contains an infinite interval (i.e. a set of the form $I\times\{n_0\}$ with $I\subset\N $ unbounded interval), then $s=s_0+s_\infty$ with finitely supported $s_0\in\mathcal{S}$ and an $SIX$-functional $s_\infty$. Otherwise $s$ is a weak* limit of a weakly unconditionally Cauchy series of finitely supported elements of $\mathcal{S}$.
    \item The set $\mathcal{S}\subset Y^*$ is weak* compact.
\end{enumerate}
\end{lemma}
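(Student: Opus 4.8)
The plan is to establish (1)--(4) in order, using (1) freely afterwards. Throughout I write a generic $s\in\mathcal{S}$ as $s=\sum_k s_k$ with each $s_k$ an $AH$- or $X$-functional and $\range s_k\subseteq(P_k\times Q_k)\cap\Lambda$ for successive $(P_k)_k,(Q_k)_k$; since the blocks $P_k\times Q_k$ are then pairwise disjoint no cancellation occurs, so $\range s_k\subseteq\range s$ for every $k$. I record two observations used repeatedly: (a) if the index set is infinite, all $P_k,Q_k$ are bounded, hence each $s_k$ is finitely supported, and as no $AH$-functional is finitely supported each $s_k$ is then a finitely supported $X$-functional; conversely a constituent has unbounded range iff it is an $AH$- or $SIX$-functional, and such a constituent can only be the \emph{last} one in the sequence (successivity of $(P_k)_k$ leaves no room after an unbounded $P_k$, and an $AH$- or $SIX$-functional forces $P_k$ unbounded). (b) $\mathcal{S}$ is stable under changing the signs of its constituents: flipping the $\varepsilon_l$'s of an $AH$-functional yields an $AH$-functional, negating the coefficient vector $\mathbf a$ of an $X$-functional yields an $X$-functional (as $K=-K$), and ranges are unchanged; hence $\sum_k\eta_k s_k\in\mathcal{S}$ for all signs $(\eta_k)_k$. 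For part (1): expand each $s_n=\sum_k s_{n,k}$; if $(s_n)_n$ is infinite then by (a) each $s_n$ is finitely supported, hence a finite sum of constituents, so $\sum_n s_n$ is a countable sum of $AH$/$X$-functionals whose ranges, ordered lexicographically in $(n,k)$, form a diagonal sequence (within $s_n$ the finitely many constituents have successive ranges, all inside $\range s_n$, and the blocks carrying $\range s_n$ march both rightwards and upwards). If $(s_n)_n$ is finite, say $s_1,\dots,s_N$, then $s_1,\dots,s_{N-1}$ are finitely supported and one concatenates their finitely many successive constituent-ranges with the diagonal sequence of ranges coming from $s_N$.

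For part (2): the sets $F_n$ are pairwise disjoint, so $\chi_F\cdot s=\sum_n\chi_{F_n}\cdot s$ with $F_n\subseteq(I_n\times J_n)\cap\Lambda$ and $(I_n)_n,(J_n)_n$ successive; by part (1) it suffices to show each $\chi_{F_n}\cdot s\in\mathcal{S}$ with range inside $(I_n\times J_n)\cap\Lambda$. Since $\chi_{F_n}\cdot s=\sum_k\chi_{F_n}\cdot s_k$, the point is that restricting one $AH$- or $X$-functional to the $\Lambda$-rectangle $(I_n\times J_n)\cap\Lambda$ produces again such a functional. For an $X$-functional $S^{\mathbf a}_{(I\times\{j\})_j}$ the restriction equals $S^{\mathbf a\cdot\chi_{J_n}}_{((I\cap I_n)\times\{j\})_j}$, which is an $X$-functional because the restriction of $\mathbf a=(x^*(x_n))_n$ to the \emph{interval} $J_n$ of coordinates again lies in $K$ --- this is exactly where bimonotonicity of $(x_n)_n$ is used. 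For an $AH$-functional $S^{\mathbf a}_{(I_l\times\{\kappa\})_l}$ the restriction vanishes unless $\kappa\in J_n$, in which case it equals $S^{\mathbf a}_{((I_l\cap I_n)\times\{\kappa\})_l}$; the nonempty intervals $I_l\cap I_n$ form a consecutive block $l\in[l_1,l_2]$ (as $I_n$ is an interval), and reindexing turns the weights $(\varepsilon_l/(l+j_0))_l$ into $(\varepsilon'_j/(j+j_0'))_j$ with $j_0'=l_1-1+j_0$, again an $AH$-functional. Restricting a diagonal sequence of ranges to a fixed set keeps it diagonal, so each $\chi_{F_n}\cdot s\in\mathcal{S}$ with the required range, and part (1) concludes.

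For part (3): write $s=\sum_k s_k$ and distinguish two cases via (a). If every $s_k$ is finitely supported (hence a finitely supported $X$-functional), the partial sums $\sum_{k\leqslant N}s_k$ are finitely supported members of $\mathcal{S}$ converging weak$^*$ to $s$, and the series $\sum_k s_k$ is weakly unconditionally Cauchy by (b) (so $\|\sum_{k\leqslant N}\eta_k s_k\|\leqslant 1$); moreover, since $(Q_k)_k$ is successive, every row of $\Lambda$ meets at most one $\supp s_k$, so $\supp s$ contains no infinite interval --- the second alternative. Otherwise some $s_k$ has unbounded range; by (a) the index set is finite and the last term $s_{k_0}$ is a $SIX$- or an $AH$-functional, with $s=s_0+s_{k_0}$ and $s_0:=\sum_{k<k_0}s_k$ finitely supported in $\mathcal{S}$. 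If $s_{k_0}$ is a $SIX$-functional we are in the first alternative. If $s_{k_0}$ is an $AH$-functional, truncating it to its first $N$ intervals gives finitely supported $u_N\in\mathcal{S}$ with $s_0+u_N$ finitely supported in $\mathcal{S}$ and $s_0+u_N\to s$ weak$^*$, and the associated series is weakly unconditionally Cauchy because, by (b), all signed partial sums stay in $\mathcal{S}$ (a signed truncation of an $AH$-functional is an $AH$-functional) --- again the second alternative. The step requiring genuine care, and which I expect to be the main obstacle, is matching these sub-cases to the dichotomy \emph{as stated}: one must verify that the hypothesis ``$\supp s$ contains an infinite interval'' holds precisely in the $SIX$-branch, i.e.\ that the support of any admissible constituent along a single row of $\Lambda$ is an unbounded interval only when that constituent is a $SIX$-functional.

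For part (4): by (b) one has $|s(y)|\leqslant\|y\|$ for $s\in\mathcal{S}$, so $\mathcal{S}\subseteq B_{Y^*}$, which is weak$^*$ compact and, since $Y$ is separable, metrizable; hence it suffices to show $\mathcal{S}$ is weak$^*$ sequentially closed. Given $s^{(i)}\to s$ weak$^*$ with $s^{(i)}\in\mathcal{S}$, note that on each finite window $W_N=\{(m,n)\in\Lambda:m\leqslant N\}$ at most $N$ constituents of $s^{(i)}$ are active, the combinatorial data describing them on $W_N$ (types, intervals, signs, shifts $j_0$) ranges over a finite set, and the coefficient vectors of the active $X$-functionals lie in the compact set $K$; diagonalizing over $N$, pass to a subsequence along which this data stabilizes for every $N$, the coefficient vectors converging (in $K$, as $K$ is closed in $\R^{\N}$). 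Assembling the stabilized pieces window by window yields a finite or infinite family of $AH$- and $X$-functionals with diagonal ranges whose sum is $s$ --- an interval growing unbounded in the limit merely turns a finitely supported constituent into a $SIX$- or $AH$-functional, which is permissible --- whence $s\in\mathcal{S}$.
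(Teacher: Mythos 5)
Your parts (1), (2) and (4) are correct and take essentially the same route as the paper: (1) is immediate from the definition, (2) rests on the observation that restricting a single $AH$- or $X$-functional to a $\Lambda$-rectangle again yields such a functional (bimonotonicity of $(x_n)_n$ for the $X$-case, a shift of $j_0$ for the $AH$-case) and then invokes (1), and (4) is the same window-by-window stabilization argument using compactness of $K$ and the finiteness of the combinatorial data on each window.

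There is, however, a genuine gap in part (3), in the branch where the final constituent $s_{k_0}$ has unbounded range and is an $AH$-functional. You handle this case uniformly by truncating $s_{k_0}$ to its first $N$ intervals, which presupposes that $s_{k_0}$ consists of infinitely many \emph{finite} intervals. But an $AH$-functional may also consist of finitely many intervals $I_1<\dots<I_{m_0}$ with the last one infinite --- this is exactly the configuration the paper isolates --- and then your $u_N$ is not finitely supported once $N\geqslant m_0$, and the statement in fact demands the first alternative. The correct move here is to peel off $s_\infty:=a_{m_0}S_{I_{m_0}\times\{k\}}$, which is an $SIX$-functional (the coefficient sequence supported on the single coordinate $k$ with value $a_{m_0}$, $|a_{m_0}|\leqslant 1$, lies in $K$ since the basis is normalized and bimonotone), and to set $s_0:=s-s_\infty$, which is finitely supported. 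This sub-case cannot be absorbed into the second alternative: a series $\sum_i c\,S_{J_i\times\{k\}}$ with fixed $c\neq 0$ and successive finite intervals $J_i$ exhausting an infinite interval is \emph{not} weakly unconditionally Cauchy, since testing $\sum_{i=1}^N(-1)^iS_{J_i\times\{k\}}$ against $\sum_{i=1}^N(-1)^i\sum_{m\in J_i}e_{m,k}$ shows its norm grows like $N/\log N$; equivalently, $\theta_k$ pairs nontrivially with $S_{I_{m_0}\times\{k\}}$ but, by Lemma \ref{wuc}, would annihilate any weak* limit of a weakly unconditionally Cauchy series of finitely supported functionals. The ``matching'' difficulty you flag at the end is real but secondary: an $AH$-functional built from infinitely many adjacent singletons $I_j=\{j\}$ does have support equal to an infinite interval while admitting no $s_0+s_\infty$ decomposition, so the dichotomy cannot literally be read off the support; what is used downstream (in Lemma \ref{Lemma-wuc-Y}) is only the disjunction of the two conclusions, which your argument delivers once the missing sub-case above is added.
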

\begin{proof} (1) follows immediately by the definition of $\mathcal{S}$. 

For (2) take any functional $s=\sum_ns_n\in\mathcal{S}$ and a diagonal sequence of $\Lambda$-rectangles $\mathcal{F}=(F_n)_n$. Note that by bimonotonicity of the basis $(x_n)_n$ and definition of $AH$-functionals and $X$-functionals,  $\chi_{F_m} \cdot s_n\in \mathcal{S}$ for any $n,m\in\N$. As the family $(F_m\cap \range s_n)_{n,m}$ can be ordered to be a diagonal sequence in $\Lambda$, $\chi_F\cdot s\in \mathcal{S}$ by (1).

To show (3) take $s\in\mathcal{S}$. By definition, there is a sequence $(s_n)_n$ whose elements are either $AH$-functionals or $X$-functionals, such that $s=\sum_n s_n$ and $(\range s_n)_n$ is diagonal. Note that if some $s_{n_0}$ has infinite support, then by the definition and diagonality of $(\range s_n)_n$, $s=\sum_{n=1}^{n_0}s_n$ with $\supp s_n$ finite for each $n<n_0$. 

First we  deal with the case of $\supp s$ containing an infinite interval. Then, by the above, $s$ is of the form $s=\sum_{n=1}^{n_0}s_n$, with $\supp s_{n_0}$ containing an infinite interval and $\supp s_n$ finite for any $n<n_0$. If $s_{n_0}$ is a $SIX$-functional, the case is settled, otherwise $s_{n_0}$ is an $AH$-functional, i.e.  $s_{n_0}=S^\mathbf{a}_{\mathcal{F}}$ with $\mathcal{F}=(I_j\times \{k\})_j$ for some successive intervals $I_1,I_2,\dots$ in $\N$,  $k\in\N$, and $\mathbf{a}=(a_j)_j\subset [-1,1]$. Observe that in this case there is $m_0 \in \N$ such that $\mathcal{F} = (I_j \times \{k\})_{j=1}^{m_0}$, with  the last interval $I_{m_0}$ infinite and $I_j$, $j<m_0$,  finite. Hence, the $SIX$-functional $s_\infty=a_{m_0}S_{I_{m_0}\times\{k\}}$ and the finitely supported $s_0 = s - s_\infty$ satisfy the conclusion of the lemma. 

Now we deal with the case of $\supp s$ containing no infinite interval. If for every $n \in \N$, $s_n$ has finite support, the series $\sum_n s_n$ is weakly unconditionally Cauchy as $\big\{\sum_n\varepsilon_ns_n:(\varepsilon_n)_n\subset\{-1,1\}^\mathbb{N}\big\}\subset\mathcal{S}$ (see Subsection \ref{basics}). Otherwise, by the above, $s$ is of the form $s=\sum_{n=1}^{n_0}s_n$, with $\supp s_{n_0}$ infinite and $\supp s_n$ finite for any $n<n_0$. As $\supp s$ contains no infinite interval, and $X$-functionals with infinite support are precisely $SIX$-functionals by the definition of $\Lambda$, $s_{n_0}$ is an $AH$-functional. Write $s=\sum_{j=1}^\infty a_jS_{I_j\times \{k\}}$ for suitable $(a_j)_j$, $(I_j)_j$ and $k$, and note the series is weakly unconditionally Cauchy as $\big\{\sum_j\varepsilon_ja_jS_{I_j\times \{k\}}:(\varepsilon_j)_j\subset\{-1,1\}^\mathbb{N}\big\}\subset\mathcal{S}$ (see Subection \ref{basics}), which ends the proof of (3). 


Concerning (4), note first that as $Y$ is separable and $\mathcal{S}$ is bounded, it is enough to prove that the pointwise limit $s$ of a sequence $(s_j)_j\subset\mathcal{S}$ is an element of $\mathcal{S}$. By the definition of $\mathcal{S}$ and compactness of $K$ it suffices to check that for any finite $F=(I\times I)\cap \Lambda\subset\Lambda$, with $I\subset \N$ interval, $\chi_F\cdot s\in \mathcal{S}$. Fix $F=(I\times I) \cap \Lambda$, with $I\subset \N$ interval. By the definition of $\mathcal{S}$  each $\chi_F\cdot s_j$ is of the form $\chi_F\cdot s_j=\sum_n \chi_{F_{j,n}}\cdot s_j$ for some finite diagonal sequence of $\Lambda$-rectangles $(F_{j,n})_n$ with each $\chi_{F_{j,n}}\cdot s_j$  either an $AH$-functional or an $X$-functional. Passing to a subsequence of $(s_j)_j$ we can assume that $(\bigcup_nF_{j,n})_j$ is constant, i.e. $F_{j,n}=F_n$ for all $j$ and a (finite) diagonal sequence of $\Lambda$-rectangles $(F_n)_n$. Again passing to a subsequence  we can assume that for each $n$ separately, either $(\chi_{F_n}\cdot s_j)_j$  consists of $AH$-functionals or $(\chi_{F_n}\cdot s_j)_j$ consists of $X$-functionals. By the definition of $AH$-functionals and the set $K$, for any $n$, the functional $\chi_{F_n}\cdot s$ is either an $AH$-functional, a zero functional, or an $X$-functional, which proves that $\chi_F\cdot s\in\mathcal{S}$.
\end{proof}

\begin{notation}
By Lemma \ref{structure of S}(2), for any $F \subset \N^2$ such that $F \cap \Lambda$ is a $\Lambda$-rectangle, the projection 
\[P_F:c_{00}(\Lambda)\ni x=\sum_{(m,n)\in\Lambda}x_{m,n}e_{m,n}\mapsto \sum_{(m,n)\in F \cap \Lambda}x_{m,n} e_{m,n}\in c_{00}(\Lambda)\]
is bounded with norm 1 with respect to the norm of $Y$. We denote its extension to $Y$ also by $P_F$. Note that the definition of $P_F$ does depend only on $F \cap \Lambda$ but it is convenient for notation to define it for a general $F \subset \N^2$ such that $F \cap \Lambda$ is a $\Lambda$-rectangle.

\end{notation}
The next two observations on the subspace and basis structure of $Y$ follow by the definition of the norming set $\mathcal{S}$ and properties of the Azimi-Hagler space.
\begin{lemma} \label{lemma_sequences_in_Y}
\begin{enumerate}
    \item Any normalized sequence $(y_j)_j\subset c_{00}(\Lambda)$ with $(\supp y_j)_j$ forming a diagonal sequence in $\Lambda$ is equivalent to the unit vector basis of $\ell_1$. 
    \item For any $n\in\N$, the sequence $(e_{m,n})_{m= n}^\infty$ is 1-equivalent to the canonical basis of the Azimi-Hagler space. 
    \item     The space $Y$ is $\ell_1$-saturated.    
\end{enumerate}
\end{lemma}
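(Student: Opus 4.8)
The plan is to prove the three items in order, each reducing to the already-established structure of the norming set $\mathcal{S}$ (Lemma \ref{structure of S}) and the properties of the Azimi-Hagler space (Theorem \ref{Theorem-AHProperties}). For item (1), let $(y_j)_j\subset c_{00}(\Lambda)$ be normalized with $(\supp y_j)_j$ diagonal, say $\supp y_j\subset I_j\times J_j$ for successive intervals $(I_j)_j$, $(J_j)_j$. One direction is automatic: $\big\|\sum_j a_j y_j\big\|\leqslant\sum_j|a_j|\,\|y_j\|=\sum_j|a_j|$ by the triangle inequality. For the reverse inequality, I would, for each $j$, pick $s_j\in\mathcal{S}$ supported on $\range y_j$ with $s_j(y_j)=\|y_j\|=1$; such $s_j$ exists by weak* compactness of $\mathcal{S}$ (Lemma \ref{structure of S}(4)) and the fact that $\mathcal{S}$ is the norming set. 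Then, given signs $\eps_j=\sgn(a_j)$, the functional $s:=\sum_j \eps_j s_j$ has diagonal $(\range s_j)_j$ (since $\range s_j\subset\range y_j\subset I_j\times J_j$), hence $s\in\mathcal{S}$ by Lemma \ref{structure of S}(1) — here I use that each $s_j$ is itself a finite sum of $AH$- and $X$-functionals, so $\eps_j s_j$ is too, and sign-flipping an $AH$-functional keeps it an $AH$-functional while $X$-functionals have coefficient vector in the symmetric set $K$ only after restricting; more carefully, one truncates $s_j$ to a single summand or invokes that $\{\sum_j\eps_j t_j:\eps_j=\pm1\}\subset\mathcal{S}$ whenever $\sum_j t_j\in\mathcal{S}$, exactly as used in the proof of Lemma \ref{structure of S}(3). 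Applying $s$ to $\sum_j a_j y_j$ gives $\big\|\sum_j a_j y_j\big\|\geqslant s\big(\sum_j a_j y_j\big)=\sum_j |a_j|\,s_j(y_j)=\sum_j|a_j|$, since the supports are disjoint. Thus $(y_j)_j$ is isometrically equivalent to the $\ell_1$-basis.

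For item (2), fix $n$ and consider $(e_{m,n})_{m\geqslant n}$. I would show that for any $x=\sum_{m\geqslant n} x_m e_{m,n}\in c_{00}$, the norm $\big\|\sum_m x_m e_{m,n}\big\|_Y$ equals $\big\|\sum_m x_m e_m\big\|_{AH}$. The key point: a functional $s\in\mathcal{S}$ restricted to the "column" $\N\times\{n\}$ is, by Lemma \ref{structure of S}(2) and the classification in its proof, either an $AH$-functional of the form $\sum_j \frac{\eps_j}{j+j_0} S_{I_j\times\{n\}}$ or an $X$-functional $S_{I\times\{n\}}^{\mathbf a}$ with $\mathbf a\in K$ — but an $X$-functional restricted to a single column $I\times\{n\}$ collapses to $a_n S_{I\times\{n\}}$ with $|a_n|\leqslant 1$, which is dominated by the summing-functional norm. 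Hence $\sup_{s\in\mathcal{S}} s(x)$ over $x$ supported in column $n$ is exactly the supremum over successive-interval sign-weighted summing functionals $\sum_j \frac1j|S_{I_j}(x)|$ (the weights $\frac{1}{j+j_0}$ being dominated by starting at $j_0=0$, and the $AH$-functional definition allowing arbitrary $j_0\in\N\cup\{0\}$ recovers the full $\|\cdot\|_{AH}$ supremum), which is precisely $\|x\|_{AH}$ by Definition \ref{Definition-AHSpace}. This gives $1$-equivalence.

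For item (3), I would combine (1) and (2) with an Azimi-Hagler-style gliding hump. Let $Z$ be a closed infinite-dimensional subspace of $Y$. The canonical basis $(e_{m,n})_{(m,n)\in\Lambda}$ (ordered in a way making the projections $P_F$ bounded, as noted after Lemma \ref{structure of S}) is a Schauder basis of $Y$, so by a standard perturbation argument $Z$ contains a normalized basic sequence $(z_j)_j$ that is a small perturbation of a block sequence $(u_j)_j$ with respect to this basis. If the blocks $(u_j)_j$ have diagonal supports (after passing to a subsequence), item (1) gives an $\ell_1$ copy. Otherwise — this is the case requiring care — infinitely many blocks live essentially in a bounded number of columns $\N\times\{n_1\},\dots,\N\times\{n_r\}$; projecting to one fixed column $n_i$ via $P_{\N\times\{n_i\}}$ (norm $1$) yields, for a further subsequence, a seminormalized block sequence inside the copy of $X_{AH}$ from item (2), and $\ell_1$-saturation of $X_{AH}$ (Theorem \ref{Theorem-AHProperties}(4)) produces the desired $\ell_1$ copy. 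The main obstacle is the bookkeeping in this last step: making precise that "not diagonal" forces concentration into finitely many columns, handling the perturbation so that the Krein–Milman–Rutman stability (as in the proof of Lemma \ref{Lemma_ell_1_saturation}) transfers the $\ell_1$-equivalence from the block sequence back to $(z_j)_j\subset Z$, and ensuring the column projections do not annihilate the sequence — all of which parallels the argument already carried out in Lemma \ref{Lemma_ell_1_saturation} but now with the two-dimensional index set $\Lambda$.
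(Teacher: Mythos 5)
Parts (1) and (2) of your plan coincide with the paper's proof: for (1) you restrict norming functionals $s_j$ to the diagonal rectangles via Lemma \ref{structure of S}(2), observe that $\mathcal S$ is symmetric (for $AH$-functionals the signs $\varepsilon_j$ are built in, and $K=-K$ since $B_{X^*}$ is symmetric), and sum with signs using Lemma \ref{structure of S}(1); for (2) you classify the restrictions of elements of $\mathcal S$ to a single column exactly as the paper does (an $AH$-functional with $j_0\geqslant 0$, dominated by the weights $\tfrac1j$, or a single term $aS_{I\times\{n\}}$ with $|a|\leqslant 1$), and exhibit the single-column $AH$-functionals with $j_0=0$ for the lower bound. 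Both are correct.

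In part (3) your dichotomy is the right one, but the second branch has a genuine gap. When the block sequence does not admit diagonal rectangles capturing a uniform fraction of the norm, you project to a single column $\N\times\{n_i\}$ and invoke Theorem \ref{Theorem-AHProperties}(4). But $\ell_1$-saturation of $X_{AH}$ only guarantees a copy of $\ell_1$ \emph{somewhere} inside the closed span of the projected sequence; it does not give you a \emph{subsequence} of $\bigl(P_{\N\times\{n_i\}}u_j\bigr)_j$ equivalent to the unit vector basis of $\ell_1$, and without that you cannot use the norm-one projection to pull the $\ell_1$-structure back to $(u_j)_j\subset Z$. To close this you would need the stronger fact that every seminormalized block basis of $X_{AH}$ has an $\ell_1$-subsequence, which is true but is not among the properties listed in Theorem \ref{Theorem-AHProperties}. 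The paper avoids the issue by running the dichotomy at the level of the subspace rather than of a sequence: either for every $n$ some normalized $z\in Z$ has $\|P_{\N\times[n,\infty)}z\|\geqslant\tfrac12$ (then a gliding hump produces a normalized sequence in $Z$ and diagonal rectangles $(F_n)_n$ with $\|P_{F_n}z_n\|\geqslant\tfrac13$, and part (1) applies), or $P_{\N\times[1,N]}$ is bounded below on all of $Z$ for some $N$, so that $Z$ embeds isomorphically into a finite direct sum of copies of $X_{AH}$, which is $\ell_1$-saturated by Theorem \ref{Theorem-AHProperties}(4) together with Lemma \ref{Lemma_ell_1_saturation}. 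Recasting your second case in this subspace form (or proving the block-basis property of $X_{AH}$ separately) would repair the argument.
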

\begin{proof}
(1) Take a diagonal sequence of $\Lambda$-rectangles $(F_j)_j$ with $\supp y_j\subset F_j$ and $(s_j)_j\subset \mathcal{S}$ with $s_j(y_j)=1$ for each $j\in\N$. By Lemma \ref{structure of S}(1) also $(\chi_{F_j}\cdot s_j)_j$ is equivalent to the unit vector basis of $c_0$ and is biorthogonal to $(y_j)_j$, which ends the proof. 

(2) Fix $n\in\N$ and let $Z=\spn\{e_{m,n}: m\in\N, m \geq n\}\subset Y$.  Note that for any $s\in\mathcal{S}$, $s\restrict{Z}$ is a restriction to $Z$ of either an $AH$-functional to $Z$ or an $X$-functional of the form $a \cdot S_{I\times\{n\}}$ with $|a|\leq1$ (as $K \subset [-1,1]^\N$). On the other hand, for any successive sequence of intervals $I_1,I_2,\dots $ we have  $\pm S_{I_1\times\{n\}}\pm\tfrac{1}{2}S_{I_2\times\{n\}}\pm\dots\in\mathcal{S}$, which ends the proof.   

(3) Fix a closed infinite dimensional subspace $Z$ of $Y$. Consider two cases. 

\textbf{Case 1.} For any $n$ there is $z_n\in Z$ with $\|z_n\|=1$ and $\|P_{\N\times [n,\infty)}z_n\|\geqslant \tfrac{1}{2}$. Therefore for some diagonal sequence of $\Lambda$-rectangles $(F_n)_n$ and normalized sequence $(z_n)_n\subset Z$, $\|P_{F_n}z_n\|\geqslant \tfrac{1}{3}$. By Lemma \ref{structure of S}(2), the sequence $(z_n)_n$ dominates the sequence $(P_{F_n}z_n)_n$, with the latter equivalent to the unit vector basis of $\ell_1$ by Lemma \ref{lemma_sequences_in_Y}(1). It follows that $(z_n)_n$ is equivalent to the unit vector basis of $\ell_1$.

\textbf{Case 2.} There is $N\in\N$ such that for any $z\in Z$ with $\|z\|=1$, $\|P_Fz\|\geqslant \tfrac{1}{2}$, where $F=\N\times \{1,\dots,N\}$. It follows that $P_F$ is an isomorphic embedding of $Z$ into the space $P_F(Y)\cong \bigoplus_{n=1}^N\clspn\{e_{m,n}:m\in\N , m \geqslant n \}$, i.e. a finite direct sum of isometric copies of $X_{AH}$, which are $\ell_1$-saturated by Theorem \ref{Theorem-AHProperties}(4). As a finite direct sum of copies of $X_{AH}$ is isomorphic to a subspace of an $\ell_1$-direct sum of copies of $X_{AH}$, Lemma \ref{Lemma_ell_1_saturation} ends the proof. 
\end{proof}

\begin{proposition}\label{prop_predual_of_Y} 
\begin{enumerate}
    \item The unit vectors $(e_{m,n})_{(m,n)\in\Lambda}$ ordered lexicographically form a boundedly complete basis of $Y$.
    \item $Y$ is isometric to $V^*$, where $V:=\clspn\{e_{m,n}^*:(m,n)\in\Lambda\}\subset Y^*$.
    \item $\mathcal{S}\cap V\subset Y^*$ is weak*  dense in $\mathcal{S}$.
\end{enumerate}
\end{proposition}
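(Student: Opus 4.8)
I would prove the three items in the order: (1)–Schauder basis, (3), (1)–bounded completeness, (2); the duality $Y\cong V^*$ then falls out of standard facts about boundedly complete bases, and essentially all the work is in bounded completeness.

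\textbf{The basis (part of (1)).} Order $\Lambda$ lexicographically. Each initial segment of this order is finite and decomposes as $G=G_1\cup G_2$, where $G_1=(\{1,\dots,M-1\}\times\N)\cap\Lambda$ consists of the first $M-1$ ``rows'' and $G_2=\{M\}\times\{1,\dots,N\}$ is an initial segment of the $M$-th row, for suitable $M$ and $N\leqslant M$. Both $G_1$ and $G_2$ are $\Lambda$-rectangles, so $P_G=P_{G_1}+P_{G_2}$ has norm $\leqslant 2$; hence the partial-sum projections are uniformly bounded, and since $\spn\{e_{m,n}\}$ is dense in $Y$ and the functionals $e^*_{m,n}$ are continuous, $(e_{m,n})_{(m,n)\in\Lambda}$ ordered lexicographically is a Schauder basis of $Y$. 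In particular the partial sums of each $y\in Y$ converge, so the ``tail'' projections $Q_M:=\id-P_{\{1,\dots,M\}\times\N}$ onto rows $>M$ satisfy $\|Q_M y\|\to0$ for every $y$.

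\textbf{Part (3) and the isometric embedding $Y\hookrightarrow V^*$.} Fix $s\in\mathcal{S}$ and apply Lemma \ref{structure of S}(3). If $s$ is the weak* limit of a weakly unconditionally Cauchy series $\sum_j t_j$ of finitely supported elements of $\mathcal{S}$, then the partial sums $T_J=\sum_{j\leqslant J}t_j$ are finitely supported elements of $\mathcal{S}$ (being finite sums of functionals with diagonal ranges, cf. Lemma \ref{structure of S}(1)), so $T_J\in\mathcal{S}\cap V$ and $T_J\to s$ weak*. Otherwise $s=s_0+s_\infty$ with $s_0$ finitely supported in $\mathcal{S}$ and $s_\infty$ an $SIX$-functional $\sum_j a_j S_{I\times\{j\}}$; for $M$ large, $\chi_{\{1,\dots,M\}\times\{1,\dots,M\}}\cdot s=s_0+s_\infty^{(M)}$ with $s_\infty^{(M)}:=\sum_j a_j S_{(I\cap\{1,\dots,M\})\times\{j\}}$ an $X$-functional, so $s_0+s_\infty^{(M)}\in\mathcal{S}\cap V$ by Lemma \ref{structure of S}(2), while $\bigl(s-(s_0+s_\infty^{(M)})\bigr)(y)=s_\infty(Q_M y)\to0$ for every $y\in Y$. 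Thus every $s\in\mathcal{S}$ is a weak* limit of a sequence from $\mathcal{S}\cap V$, which by metrizability of $(B_{Y^*},w^*)$ gives (3). Since then $\mathcal{S}\subseteq\overline{\mathcal{S}\cap V}^{\,w^*}$ and $v\mapsto|v(y)|$ is weak* continuous, $\|y\|=\sup_{s\in\mathcal{S}}|s(y)|\leqslant\sup_{v\in B_V}|v(y)|\leqslant\|y\|$, so $V$ is $1$-norming for $Y$ and the evaluation map $J\colon Y\to V^*$ is an isometric embedding.

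\textbf{Bounded completeness --- the crux.} Once $(e_{m,n})$ is shown boundedly complete, the evaluation $Y\to[e^*_{m,n}]^*=V^*$ is onto, and with the previous paragraph $Y\cong V^*$ isometrically, yielding (1) and (2). To prove bounded completeness I would argue by contradiction: if it fails, there is a block basic sequence $(u_k)$ of $(e_{m,n})$ (blocks being consecutive intervals of the lexicographic order) with $\|u_k\|\geqslant\delta>0$ and $\sup_K\|\sum_{k\leqslant K}u_k\|<\infty$. Since a single row spans a finite-dimensional subspace, after standard reductions (passing to a subsequence and regrouping) we may assume $\supp u_k\subseteq([a_k,b_k]\times\N)\cap\Lambda$ with $b_k<a_{k+1}$. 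For each $k$ choose $s^{(k)}\in\mathcal{S}$ with $s^{(k)}(u_k)>\delta$ and, by Lemma \ref{structure of S}(2), $\supp s^{(k)}\subseteq\supp u_k$, and decompose $s^{(k)}=\sum_l s^{(k)}_l$ into its $AH$- and $X$-functional components. The aim is to extract a subfamily of the $s^{(k)}_l$ whose ranges form a \emph{diagonal} sequence while still collecting a fixed fraction of each value $s^{(k)}(u_k)$: its $c_0$-sum then lies in $\mathcal{S}$ by Lemma \ref{structure of S}(1) and takes on $\sum_{k\leqslant K}u_k$ a value bounded below by a positive multiple of $\delta K$, contradicting boundedness. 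I expect the extraction to be the main obstacle, needing a case split: components supported on a column, or a depth-interval, that can be pushed to infinity with $k$ assemble at once into a diagonal sequence; components confined to a bounded set of columns are handled by restricting to a single such column, which by Lemma \ref{lemma_sequences_in_Y}(2) is isometric to $X_{AH}$, and invoking bounded completeness of the Azimi--Hagler basis (Theorem \ref{Theorem-AHProperties}(1)); and $X$-type components spread over infinitely many columns are, by the diagonal bookkeeping that membership in $\mathcal{S}$ forces on them, already equivalent to the $\ell_1$-basis (Lemma \ref{lemma_sequences_in_Y}(1)), so they too blow up the partial sums. It is exactly this last dichotomy --- that any genuinely spread-out copy of $X$-structure inside $Y$ becomes $\ell_1$-like --- for which $\mathcal{S}$ was closed under $c_0$-sums along diagonal sequences, and it is what keeps the (possibly non-boundedly-complete) basis of $X$ from destroying bounded completeness of $Y$.
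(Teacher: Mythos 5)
Your overall architecture matches the paper's: the Schauder‑basis property via decomposing lexicographic initial segments into $\Lambda$-rectangles, item (3) by weak$^*$-approximating each $s\in\mathcal{S}$ by finitely supported elements of $\mathcal{S}$ (the paper does this in one line with $\chi_{[1,n]^2}\cdot s$, which is cleaner than your case split through Lemma \ref{structure of S}(3), but both work), and item (2) from bounded completeness plus $1$-norming of $V$. The problem is that bounded completeness --- which you correctly identify as the crux --- is left as a plan (``I would argue\dots'', ``I expect the extraction to be the main obstacle''), and the plan as stated does not close.

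Concretely: after your reduction the blocks $u_k$ are separated only in the row index $m$; they may all meet the same small columns. Hence the $X$-functional components $s^{(k)}_l$ of the norming functionals for different $k$ typically all have column-ranges containing $n=1$, so they cannot be rearranged into a diagonal sequence, and your third case (``$X$-type components spread over infinitely many columns are already equivalent to the $\ell_1$-basis'') does not apply: Lemma \ref{lemma_sequences_in_Y}(1) concerns vectors with diagonal supports, and the relevant portions of the $u_k$ fail to have diagonal supports precisely because they share columns. The paper resolves this before ever touching the norming functionals: for each fixed column $n_0$ the partial sums of $\sum_m a_{m,n_0}e_{m,n_0}$ are bounded, so by bounded completeness of the Azimi--Hagler basis the portion of the offending block in column $n_0$ tends to $0$; one may therefore pass to a subsequence and replace each block by its restriction $P_{F_i}y_{j_i}$ to a rectangle $F_i$ avoiding the first few columns, so that $(F_i)_i$ is diagonal while $\|P_{F_i}y_{j_i}\|>\delta/2$. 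Only then are norming functionals chosen, supported in the $F_i$; their sum lies in $\mathcal{S}$ by Lemma \ref{structure of S}(1) and evaluates to more than $i_N\delta/2$ on a suitable partial sum, giving the contradiction with no case analysis on $AH$- versus $X$-components at all. Your second case contains exactly the needed ingredient, but it must be applied first, to the vectors, to legitimize the diagonal extraction; as organized, that extraction step is a genuine gap.
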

\begin{proof}
(2) follows from (1) (see \cite[Theorem 3.2.10]{AlbiacKalton}), (3) follows by (1) and Lemma \ref{structure of S}(2), as for any $s\in\mathcal{S}$, $(P^*_{[1,n]^2}s)_n\subset \mathcal{S}$ weak* converges to $s$ by (1). 

For (1) first note that the sequence of unit vectors $(e_{m,n})_{(m,n)\in\Lambda}$ ordered lexicographically forms a basis of $Y$ (with the basis constant at most $3$) by Lemma \ref{structure of S}(2). For bounded completeness fix $(a_{m,n})_{(m,n)\in\Lambda}\subset \R$ with partial sums of the formal series $\sum\limits_{(m,n)\in\Lambda}a_{m,n}e_{m,n}$ uniformly bounded. Assume towards contradiction that 
for some $(m_j,n_j)_j,(M_j,N_j)_j\subset\Lambda$, $(m_j,n_j)\leqslant_{lex}(M_j,N_j)<_{lex}(m_{j+1},n_{j+1})$, $j\in\N$, and $\delta>0$ we have  $\|y_j\|>\delta$, $j\in\N$, where
\[y_j=\sum_{(m,n)\in E_j}a_{m,n}e_{m,n}, \ \  E_j:=\{(m,n)\in\Lambda: (m_j,n_j)\leqslant_{lex}(m,n)\leqslant_{lex}(M_j,N_j)\},\ \  j\in\N.\] 
As any interval $\{(m,n)\in\Lambda: (\tilde m,\tilde n)\leqslant_{lex} (m,n)\leqslant_{lex} (\tilde M, \tilde N)\}$  can be represented as a union of at most 3 $\Lambda$-rectangles, we can assume that each $E_j$, $j\in\N$, is a $\Lambda$-rectangle.

For any fixed $n_0\in\N$, the sequence $(\sum\limits_{m=1}^M a_{m,n_0}e_{m,n_0})_M$ is bounded by Lemma \ref{structure of S}(2), thus Lemma \ref{lemma_sequences_in_Y}(2) combined with Theorem \ref{Theorem-AHProperties}(1) yields \[\sum_{m:(m,n_0)\in E_j}a_{m,n_0}e_{m,n_0}\xrightarrow{j\to\infty} 0 \text{ for any fixed }n_0.\]

Thus we can pick inductively $(j_i)_i\subset\N$ and $\Lambda$-rectangles $F_i\subset E_{j_i}$, $i\in\N$ with $(F_i)_i$ diagonal, so that $\|P_{F_i}y_{j_i}\|>\delta/2$ for all $i\in\N$. For each $i\in\N$, by Lemma \ref{structure of S}(2) pick $s_i\in\mathcal{S}$ with $\range s_i\subset F_i$ and $s_i(P_{F_i}y_{j_i})=\|P_{F_i} y_{j_i}\|$. By definition of $\mathcal{S}$ and choice of $(F_i)_i$, $\sum_{i=1}^{i_0}s_i\in\mathcal{S}$ for any $i_0$. Now for any $N\in\N$ pick maximal $i_N\in\N$ with $F_{i_N}\subset [1,N]^2$ and estimate
\[\Big\|\sum_{(m,n)\leqslant_{lex}(N,N)}a_{m,n}e_{m,n}\Big\|\geqslant \sum_{i=1}^{i_N}s_i(P_{F_i}y_{j_i})=\sum_{i=1}^{i_N}\|P_{F_i}y_{j_i}\|>\tfrac{1}{2}i_N\delta\]
which contradicts the choice of  $(a_{m,n})_{(m,n)\in\Lambda}$ as $i_N\xrightarrow{N\to\infty}\infty$. 
\end{proof}

Before proceeding to the last result of this section, profiting from  the list of observations above and critical for the next sections, we introduce a new class of functionals; we call $s\in\mathcal{S}$ a \textbf{$WUC$-functional}, if it is not of the form $s_0+s_\infty$, with finitely supported $s_0\in\mathcal{S}$ and a $SIX$-functional $s_\infty$. The name is justified by Lemma \ref{structure of S}(3), whereas applications of Lemma \ref{wuc} (starting from the lemma below) prove the utility of the class of $WUC$-functionals.

The following lemma forms a crucial tool in the analysis of  Baire classes of $Y$, conducted in next sections. It is based on the structure of the norming set $\mathcal{S}$ of $Y$ and information on $Y$ being a dual space, combined with the general observations on Baire functionals stated in Section \ref{Section_general}.
\begin{lemma} \label{Lemma-wuc-Y}
    \begin{enumerate}
        \item If $y^{**}\in Y_{\omega_1}^{**}\cap V^\perp$ then $y^{**}(s)=0$ for any $WUC$-functional $s\in\mathcal{S}$.
        \item The class of $SIX$-functionals is 1-norming for $Y_{\omega_1}^{**}\cap V^\perp$. 
    \end{enumerate}
\end{lemma}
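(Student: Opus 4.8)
The plan is to reduce both statements to the machinery of Section \ref{Section_general}, in particular Lemma \ref{wuc}, together with the structural description of $\mathcal{S}$ from Lemma \ref{structure of S}(3) and the fact that $Y = V^*$ is a dual space (Proposition \ref{prop_predual_of_Y}(2)). For part (1), fix $y^{**} \in Y_{\omega_1}^{**} \cap V^\perp$ and a $WUC$-functional $s \in \mathcal{S}$. By the very definition of a $WUC$-functional and Lemma \ref{structure of S}(3), $s$ is the weak* limit of a weakly unconditionally Cauchy series $s = \sum_n s_n$ of finitely supported elements of $\mathcal{S}$. Each $s_n$ has finite support, hence $s_n \in V = \clspn\{e_{m,n}^* : (m,n) \in \Lambda\}$, so $y^{**}(s_n) = 0$ for every $n$ because $y^{**} \in V^\perp$. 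Since $y^{**} \in Y_{\omega_1}^{**}$, Lemma \ref{wuc} applies and gives
\[
y^{**}(s) = \sum_{n=1}^\infty y^{**}(s_n) = 0,
\]
which is exactly the claim.

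For part (2), let $y^{**} \in Y_{\omega_1}^{**} \cap V^\perp$; we must show $\|y^{**}\| \leqslant \sup\{|y^{**}(s)| : s \text{ a } SIX\text{-functional}\}$. By Lemma \ref{norming-set} applied to the weak* compact $1$-norming set $\mathcal{S}$ (which is $1$-norming for $Y$ by the definition of the norm, and weak* compact by Lemma \ref{structure of S}(4)), $\mathcal{S}$ is $1$-norming for $Y_{\omega_1}^{**}$, so $\|y^{**}\| = \sup_{s \in \mathcal{S}} |y^{**}(s)|$. Now take an arbitrary $s \in \mathcal{S}$. By Lemma \ref{structure of S}(3) there are two cases. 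If $\supp s$ contains an infinite interval, then $s = s_0 + s_\infty$ with $s_0 \in \mathcal{S}$ finitely supported and $s_\infty$ a $SIX$-functional; since $s_0 \in V$ and $y^{**} \in V^\perp$ we get $y^{**}(s) = y^{**}(s_\infty)$, so $|y^{**}(s)|$ does not exceed the supremum over $SIX$-functionals. Otherwise $s$ is a $WUC$-functional, and part (1) gives $y^{**}(s) = 0$. In either case $|y^{**}(s)|$ is bounded by the supremum of $|y^{**}|$ over $SIX$-functionals, and taking the supremum over $s \in \mathcal{S}$ yields that the class of $SIX$-functionals is $1$-norming for $Y_{\omega_1}^{**} \cap V^\perp$.

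The only genuine subtlety is making sure the hypotheses of Lemma \ref{wuc} are met in part (1): one needs the series $\sum_n s_n$ provided by Lemma \ref{structure of S}(3) to be weakly unconditionally Cauchy \emph{and} weak* convergent to $s$, both of which are asserted there, and one needs $y^{**}$ to lie in $Y_{\omega_1}^{**}$, which is part of the hypothesis. Everything else is a direct assembly of already-proved facts; I do not anticipate any real obstacle beyond this bookkeeping, since the finite-support functionals automatically land in the predual $V$ and are therefore annihilated by $V^\perp$.
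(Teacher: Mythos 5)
Your proof is correct and follows exactly the route the paper intends: part (1) by combining Lemma \ref{structure of S}(3) with Lemma \ref{wuc} (the finitely supported terms lying in $V$ and hence being annihilated), and part (2) by the dichotomy of Lemma \ref{structure of S}(3) together with Lemma \ref{norming-set} applied to the weak* compact $1$-norming set $\mathcal{S}$. The paper's own proof is just a one-line citation of these same lemmas, so your write-up is simply the fully spelled-out version of it.
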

\begin{proof}
For (1) apply Lemma \ref{structure of S}(3) and Lemma \ref{wuc}, for (2) use (1), Lemma \ref{structure of S}(4) and Lemma \ref{norming-set}.
\end{proof}

\subsection{Baire-1 functionals of $Y$}
\label{section_baire_1_Y}
In this section we examine the structure of $Y_1^{**}$ and show the identification $Y_1^{**}\cap V^\perp\cong X$ (Proposition \ref{Prop_X=Y_1^**}), obtaining the case $\alpha=0$ of Theorem \ref{Theorem-Construction} (Proposition \ref{Prop_X=Y_1^**}), the starting point for the general case presented in the next section. 
Reasoning presented here relies on the key Lemma \ref{Lemma-wuc-Y}, properties of the Azimi-Hagler space and the Baire theorem, more precisely the "point of continuity" argument (see the proof of Lemma \ref{property of Y_1**}).

By Lemma \ref{lemma_sequences_in_Y}(2) and Theorem \ref{Theorem-AHProperties}(2), for every $n\in\N$,  $(e_{m,n})_{m=n}^\infty$ is a non-trivial weak Cauchy sequence, denote by  $\theta_n\in Y^{**}_1$ its weak* limit. \begin{lemma}\label{equivalence_x_n_theta_n}
The sequence $(\theta_n)_{n=1}^\infty\subset Y^{**}_1$ is a basic sequence 1-equivalent to $(x_n)_{n=1}^\infty\subset X$.
\end{lemma}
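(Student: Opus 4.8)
The plan is to prove that $(\theta_n)_n$ is a basic sequence $1$-equivalent to $(x_n)_n$ by showing that for any finitely supported sequence of scalars $(a_n)_{n=1}^N$ one has
\[
\Big\| \sum_{n=1}^N a_n \theta_n \Big\|_{Y^{**}} = \Big\| \sum_{n=1}^N a_n x_n \Big\|_X.
\]
Since each $\theta_n \in Y^{**}_1 \subset Y^{**}_{\omega_1}$ and $\mathcal{S}$ is a weak* compact $1$-norming set for $Y$ (being the norming set defining the norm of $Y$, and weak* compact by Lemma \ref{structure of S}(4)), Lemma \ref{norming-set} tells us that $\mathcal{S}$ is $1$-norming for $Y^{**}_{\omega_1}$, so it suffices to compute $\sup_{s \in \mathcal{S}} s\big(\sum_n a_n \theta_n\big)$. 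The key computational input is the value $s(\theta_n)$ for $s \in \mathcal{S}$: since $\theta_n$ is the weak* limit of $(e_{m,n})_{m \geqslant n}$, we have $s(\theta_n) = \lim_{m \to \infty} s(e_{m,n})$.

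The main step is therefore to understand $\lim_m s(e_{m,n})$ for $s = \sum_k s_k \in \mathcal{S}$ with $(\range s_k)_k$ diagonal. Fix $n$. Since the ranges are diagonal, all but finitely many $s_k$ have second coordinate of their range disjoint from $\{n\}$, so only finitely many $s_k$ contribute to $s(e_{m,n})$, and for $m$ large only those $s_k$ whose range meets the column $\N \times \{n\}$ in an unbounded set matter. An $AH$-functional restricted to the column $\N \times \{n\}$ has bounded support (its intervals $I_j$ are successive, hence the ``tail'' vectors $e_{m,n}$ eventually fall outside), so it contributes $0$ in the limit; an $X$-functional $S^{\mathbf a}_{\mathcal F}$ with $\mathcal F = (I \times \{n\})$ (only the columns with second coordinate $n$ matter) contributes $\mathbf{a}_n \cdot \chi_{I}(\text{large } m)$, which in the limit equals $a_n'$, the $n$-th coordinate of the corresponding point $\mathbf{a}' \in K$, provided $I$ is a tail interval, and $0$ otherwise. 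One must also deal with $SIX$-functionals similarly. Carefully book-keeping, one finds that $s(\sum_n a_n\theta_n) = \sum_n a_n b_n$ where $(b_n)_n$ arises from restricting the $X$-functional pieces of $s$ to tail columns; since $K$ is exactly the set of coordinate-sequences of functionals in $B_{X^*}$ and the basis $(x_n)_n$ is bimonotone (so restricting to a tail stays in $B_{X^*}$), the supremum over such $s$ of $\sum_n a_n b_n$ equals $\sup_{x^* \in B_{X^*}} \sum_n a_n x^*(x_n) = \|\sum_n a_n x_n\|_X$. Conversely every $x^* \in B_{X^*}$ gives rise via its coordinate sequence to an $X$-functional supported on a single diagonal column (far enough out so that the diagonality with the remaining empty functionals is vacuous), realizing the value $\sum_n a_n x^*(x_n)$; hence equality holds.

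The hard part will be the careful case analysis of $\lim_m s(e_{m,n})$ and, in particular, verifying that the supremum of the resulting linear functionals over $\mathcal{S}$ is not strictly larger than $\|\sum a_n x_n\|_X$ — i.e.\ that combining $AH$-functionals with $X$-functionals along a diagonal cannot produce extra norm on the span of the $\theta_n$'s. This is precisely where diagonality of $(\range s_k)_k$ is essential: it forces each column $\N \times \{n\}$ to be touched by only finitely many pieces $s_k$, and the $AH$-pieces wash out in the limit, so that effectively only a single $X$-functional (on a tail of each column) survives, whose coefficient vector lies in $K$. Once this limiting identity $s(\sum_n a_n\theta_n) = \sum_n a_n b_n$ with $(b_n) \in K$ (after restriction to a tail) is established, the two-sided estimate giving $1$-equivalence is immediate. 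Finally, basicity of $(\theta_n)_n$ follows from $1$-equivalence to the basic sequence $(x_n)_n$.
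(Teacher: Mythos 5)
Your overall strategy (compute $\|\sum_n a_n\theta_n\|$ against the norming set and match it with $\sup_{\mathbf a\in K}\sum_n a_nb_n$) is workable, but the proposal defers exactly the step that carries all the content --- the upper estimate $\sup_{s\in\mathcal S}s\big(\sum_n a_n\theta_n\big)\leqslant\big\|\sum_n a_nx_n\big\|$ --- and the one concrete claim you make about that step is false. An $AH$-functional need not wash out in the limit $m\to\infty$: its successive intervals $I_1<I_2<\dots$ may terminate in an \emph{infinite} interval $I_{m_0}$ (this is precisely the case isolated in the proof of Lemma \ref{structure of S}(3)), and then $\lim_m S^{\mathbf a}_{\mathcal F}(e_{m,k})=\varepsilon_{m_0}/(m_0+j_0)$, which can equal $\pm1$ (e.g.\ $s=S_{\N\times\{k\}}$ gives $\theta_k(s)=1$). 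The estimate survives only because such a piece lives on a single column $k$ and contributes at most $|a_k|\leqslant\|\sum_n a_nx_n\|$ by bimonotonicity, and because diagonality of $(\range s_k)_k$ in the \emph{first} coordinate forces at most one summand of $s$ to have unbounded first-coordinate range, so at most one piece (one such $AH$-functional or one $SIX$-functional) survives the limit. None of this is in your write-up, and the phrase ``restricting the $X$-functional pieces of $s$ to tail columns'' suggests you would let several pieces contribute on different blocks of columns; that would produce a patchwork of coefficient sequences coming from different elements of $K$, and such a patchwork is \emph{not} in general dominated by $\|\sum_n a_nx_n\|$.

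The paper sidesteps the whole case analysis. Since each $\theta_i$ annihilates the coordinate functionals, $\sum_i b_i\theta_i\in Y^{**}_1\cap V^\perp$, and by Lemma \ref{Lemma-wuc-Y}(2) the $SIX$-functionals alone are $1$-norming for $Y^{**}_{\omega_1}\cap V^\perp$ (this rests on Lemma \ref{wuc}: every $WUC$-functional is evaluated term by term by elements of $Y^{**}_{\omega_1}$, hence vanishes on $V^\perp$, and on the weak* compactness of $\mathcal S$ via Lemma \ref{norming-set}). Against a $SIX$-functional $S^{\mathbf a}_{\mathcal F}$ the computation is one line, $\theta_i(S^{\mathbf a}_{\mathcal F})=a_i$, and the two-sided identity is immediate. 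To repair your argument, either import that reduction or genuinely carry out the case analysis, including the infinite-last-interval $AH$ case and the ``only one unbounded piece'' observation.
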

\begin{proof} Fix scalars $b_1,b_2,\ldots,b_m$. Observe that for any $SIX$-functional  $S^\mathbf{a}_\mathcal{F}$, $\mathbf{a}=(a_i)_i\in K$, 
    \[\sum_{i=1}^mb_i\theta_i(S^\mathbf{a}_{\mathcal{F}})=\sum_{i=1}^ma_ib_i.\]
    On the other hand,  $\Big\|\sum\limits_{i=1}^m b_ix_i\Big\|=\sup\Big\{\sum\limits_{i=1}^m a_ib_i: (a_i)_i\in K\Big\}$, which ends the proof by Lemma \ref{Lemma-wuc-Y}.
\end{proof}
\begin{lemma}\label{property of Y_1**}
    For any $y^{**}\in Y_1^{**}\cap V^\perp$  the following hold true.
    \begin{enumerate}
    \item For every $(a_n)_n\in K$,
        \[y^{**}\Big(\sum_{n=1}^\infty a_nS_{\N\times\{n\}}\Big)=\sum_{n=1}^\infty a_ny^{**}(S_{\N\times\{n\}}).\]
        \item The series \[\sum_{n=1}^\infty y^{**}(S_{\N\times\{n\}})\theta_n \] converges in norm to $y^{**}$.
\end{enumerate}
\end{lemma}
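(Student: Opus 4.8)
Proof plan for Lemma \ref{property of Y_1}.**

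The plan is to exploit that $y^{**} \in Y^{**}_1$ is a weak* limit of a sequence $(y_k)_k$ from $Y = Y^{**}_0$, combined with the structure theory already established: the norming set $\mathcal{S}$, the $WUC$-functional dichotomy of Lemma \ref{structure of S}(3), Lemma \ref{wuc}, and the $\ell_1$-saturation/basis facts about $Y$. For part (1), fix $(a_n)_n \in K$ and consider the functional $s = \sum_n a_n S_{\N \times \{n\}}$. This $s$ is a $SIX$-type object only on its first coordinate block; more precisely I want to write $s$ itself as (a limit of) a weakly unconditionally Cauchy series built from finitely supported members of $\mathcal{S}$ together with at most one genuine $SIX$ piece, and then apply Lemma \ref{wuc}. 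The cleanest route: truncate $s$ to $s^{(N)} = \sum_{n=1}^N a_n S_{\N \times \{n\}}$; each $S_{\N \times \{n\}}$ is an $X$-functional with infinite support (a $SIX$-functional), but since $y^{**} \in V^\perp$ and, crucially, $y^{**} \in Y^{**}_1$ is Baire-$1$, I can instead decompose each $S_{\N\times\{n\}}$ along its columns into a weakly unconditionally Cauchy series of finitely supported summing functionals $S_{I_j \times \{n\}}$ over successive finite intervals $I_j$; the resulting doubly-indexed series is still weakly unconditionally Cauchy because the relevant sign-combinations stay in $\mathcal{S}$ (using that $(a_n)_n \in K$ and the $AH$-weights dominate, as in the proof of Lemma \ref{structure of S}(3)). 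Lemma \ref{wuc} then gives $y^{**}(s) = \sum_{n} a_n \sum_j y^{**}(S_{I_j \times \{n\}}) = \sum_n a_n y^{**}(S_{\N \times \{n\}})$, where the inner identity is another application of Lemma \ref{wuc} to the single column. This is the step I expect to require the most care: making sure the double series really is weakly unconditionally Cauchy in $Y^*$, i.e. that $\sup\{\|\sum \varepsilon_{n,j} a_n S_{I_j\times\{n\}}\| : \varepsilon \in \{\pm1\}\} < \infty$, which should follow from $\mathcal{S}$ being closed under the appropriate sign changes and $\mathbf{a} \in K$.

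For part (2), I would first argue the series converges in norm. Set $b_n := y^{**}(S_{\N\times\{n\}})$. By Lemma \ref{equivalence_x_n_theta_n}, $(\theta_n)_n$ is $1$-equivalent to the bimonotone normalized basis $(x_n)_n$ of $X$, so $\sum_n b_n \theta_n$ converges iff its partial sums are bounded iff $(b_n)_n$ defines (via $x^*(x_n) = b_n$) a bounded functional, equivalently $\sup_{N}\|\sum_{n=1}^N b_n x_n\| < \infty$, equivalently $\sup_N \sup\{\sum_{n=1}^N a_n b_n : (a_n)_n \in K\} < \infty$. But $\sum_{n=1}^N a_n b_n = y^{**}(\sum_{n=1}^N a_n S_{\N\times\{n\}})$ by part (1) (or by linearity before passing to the limit), and $\|\sum_{n=1}^N a_n S_{\N\times\{n\}}\|_{Y^*} \le 1$ since it is an $X$-functional, hence this quantity is bounded by $\|y^{**}\|$. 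So $z^{**} := \sum_n b_n \theta_n \in Y^{**}_1$ (each $\theta_n \in Y^{**}_1$ and the series converges in norm, and $Y^{**}_1$ is norm-closed) and $z^{**} \in V^\perp$ as well.

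It then remains to show $y^{**} = z^{**}$. Both lie in $Y^{**}_1 \cap V^\perp$, so by Lemma \ref{Lemma-wuc-Y}(2) the $SIX$-functionals are $1$-norming for this subspace, and it suffices to check $y^{**}(s) = z^{**}(s)$ for every $SIX$-functional $s = \sum_n a_n S_{I \times \{n\}}$ with $I \subset \N$ an infinite interval and $\mathbf{a} \in K$. By part (1) applied to the restriction (or a minor variant of its proof accommodating a general infinite interval $I$ in place of $\N$ — the same $WUC$-decomposition argument works, truncating $I$ to successive finite subintervals), we get $y^{**}(s) = \sum_n a_n y^{**}(S_{I \times \{n\}})$. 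Now for each fixed $n$, the difference $S_{\N \times \{n\}} - S_{I \times \{n\}} = S_{(\N \setminus I) \times \{n\}}$ is a weak* limit of a weakly unconditionally Cauchy series of finitely supported summing functionals over the finite pieces of $\N \setminus I$, so Lemma \ref{wuc} gives $y^{**}(S_{I\times\{n\}}) = y^{**}(S_{\N\times\{n\}}) = b_n$ (the tail $\N\setminus I$ contributes $y^{**}$ of a $WUC$-functional, which vanishes by Lemma \ref{Lemma-wuc-Y}(1); if $I$ is cofinite the argument is even shorter). Hence $y^{**}(s) = \sum_n a_n b_n$. On the other hand, using $z^{**} = \sum_n b_n \theta_n$ and $\theta_m(S_{I\times\{n\}}) = \delta_{mn} a_n$ (valid since $I$ is infinite, exactly as in Lemma \ref{equivalence_x_n_theta_n}), continuity of evaluation at $s$ on the norm-convergent series gives $z^{**}(s) = \sum_n a_n b_n$ as well. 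Therefore $y^{**}$ and $z^{**}$ agree on a $1$-norming set, so $y^{**} = z^{**}$, completing the proof.
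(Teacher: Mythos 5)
Your proof of part (1) rests on a claim that is false: the decomposition of $S_{\N\times\{n\}}$ (and hence of $\sum_n a_nS_{\N\times\{n\}}$) into finitely supported summing functionals $S_{I_j\times\{n\}}$ over successive finite intervals is \emph{not} a weakly unconditionally Cauchy series. Quantitatively, taking $I_j=\{j\}$ and the alternating vector $x_N=\sum_{j=1}^{N}(-1)^je_{j+n,n}$ in column $n$, every interval sum of $x_N$ has modulus at most $1$, so the $AH$-weights give $\|x_N\|$ of order $\ln N$, while $\sum_{j}(-1)^jS_{I_j\times\{n\}}(x_N)=N$; thus the sign-combinations have norms growing like $N/\ln N$. (They also do not lie in $\mathcal{S}$: two $X$- or $AH$-pieces meeting the same column can never have diagonal ranges, since diagonality forces the second-coordinate intervals to be successive.) More structurally, if such a wuc decomposition existed, Lemma \ref{wuc} would force $y^{**}(S_{\N\times\{n\}})=\sum_j y^{**}(S_{I_j\times\{n\}})=0$ for every $y^{**}\in Y^{**}_{\omega_1}\cap V^\perp$, because each finite piece lies in $V$; but $\theta_n\in Y^{**}_1\cap V^\perp$ and $\theta_n(S_{\N\times\{n\}})=1$. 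This also shows why \emph{no} argument using only Lemma \ref{wuc} and Lemma \ref{Lemma-wuc-Y} can prove (1): those lemmas apply to all of $Y^{**}_{\omega_1}\cap V^\perp$, where the conclusion of the present lemma must fail (otherwise $Y^{**}_{\omega_1}=Y^{**}_1$ for every input $X$, contradicting Theorem \ref{Theorem-Construction}). The Baire-1 hypothesis, which you mention but never actually use, is essential; the paper exploits it through a Baire-category ("point of continuity") argument: if the tails $s_i=\sum_{n\geqslant k_i}a_nS_{\N\times\{n\}}$ satisfied $|y^{**}(s_i)|>\delta$, then $H=\{s+s_i:\ s\in\mathcal{S},\ \supp s\subset[0,k_i)^2,\ i\in\N\}$ and $\mathcal{S}\cap V$ would be two weak*-dense subsets of the weak* compact set $\mathcal{S}$ on which $|y^{**}|>\delta$ and $y^{**}=0$ respectively, so $y^{**}$ restricted to $\mathcal{S}$ would have no point of continuity, contradicting that it is Baire-1 there.

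There is a second, independent gap in part (2): you deduce norm convergence of $\sum_n b_n\theta_n$ from boundedness of its partial sums, but $(\theta_n)_n$ is $1$-equivalent to the basis $(x_n)_n$ of an \emph{arbitrary} Banach space $X$, which need not be boundedly complete (take $X=c_0$ and $b_n\equiv 1$: bounded partial sums, divergent series). The paper again argues by contradiction via the same point-of-continuity device, norming the blocks $\sum_{n=k_{2i-1}}^{k_{2i}}b_n\theta_n$ by $X$-functionals $s_i=\sum_{n=k_{2i-1}}^{k_{2i}}a_n^{(i)}S_{\N\times\{n\}}$ with $(a_n^{(i)})_n\in K$. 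Your final step, identifying $y^{**}$ with $\sum_n b_n\theta_n$ by testing against $SIX$-functionals via Lemma \ref{Lemma-wuc-Y}(2), coincides with the paper's and is correct once (1) and the norm convergence are in place.
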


\begin{proof} Fix $y^{**}\in Y_1^{**}\cap V^\perp$. 

For $(1)$ fix $(a_n)_n\in K$ and note that   $\sum_{n=1}^\infty a_nS_{\mathbb{N}\times\{n\}}$ is an $X$-functional defined by $(a_n)_n$ and a family $\mathcal{F}=(\N\times\{n\})_n$. Suppose towards contradiction that the sequence $\big(\sum_{n=1}^k a_ny^{**}(S_{\mathbb{N}\times\{n\}})\big)_k$ does not converge to $y^{**}\left(\sum_{n=1}^\infty a_nS_{\mathbb{N}\times\{n\}}\right)$. Then there are $(k_i)_i\subset\mathbb{N}$ and $\delta>0$ with
    \[\left|y^{**}\left(\sum_{n=k_i}^\infty a_nS_{\mathbb{N}\times\{n\}} \right)\right|>\delta, \quad i\in\mathbb{N}.\]
     Let $s_i$ be an $X$-functional of the form  $s_i=\sum_{n=k_i}^\infty a_nS_{\mathbb{N}\times\{n\}}$, $i\in\N$. Note that $\supp s_i\cap \Lambda\subset [k_i,\infty)^2$ for all $i\in\N$, thus  
    \[H:=\{s+s_i: s \in \sS, \supp{s}\subset [0,k_i)^2,\,i\in\mathbb{N}\}\subset \mathcal{S}.\]
    Observe that both $H$ and $\mathcal{S}\cap V$ are weak* dense in $\mathcal{S}$ by Proposition \ref{prop_predual_of_Y}(3); for density of $H$ note that for any  $s\in\mathcal{S}$ and $N\in\mathbb{N}$ there is $\tilde s\in H$ with $P_{[1,N]^2}(\tilde s)=P_{[1,N]^2}(s)$. As $y^{**}\in V^\perp$, for any $s\in H$ we have $|y^{**}(s)|>\delta$, whereas $y^{**}\restrict{\mathcal{S}\cap V} = 0$. It follows that $y^{**}$ has no point of continuity in $\mathcal{S}$ which contradicts the fact that $y^{**}$ is a Baire-1 function on $\mathcal{S}$ (Baire theorem).
    
    For $(2)$ we firstly show that the series $\sum_{n=1}^\infty y^{**}(S_{\mathbb{N}\times\{n\}})\theta_n$ converges in norm. Assume this is not the case, then for some $(k_i)_i\subset\mathbb{N}$ and $\delta>0$ we have
    \[\left\|\sum_{n=k_{2i-1}}^{k_{2i}}y^{**}(S_{\mathbb{N}\times\{n\}})\theta_n\right\|>\delta,\quad i\in\mathbb{N}.\]
    By Lemma \ref{equivalence_x_n_theta_n},
    \[\left\|\sum_{n=k_{2i-1}}^{k_{2i}}y^{**}(S_{\mathbb{N}\times\{n\}})\theta_n\right\|=\left\|\sum_{n=k_{2i-1}}^{k_{2i}}y^{**}(S_{\mathbb{N}\times\{n\}})x_n\right\|.\]
    For any $i\in\N$ pick $(a_n^{(i)})_n\in K$ with
    \[\left\|\sum_{n=k_{2i-1}}^{k_{2i}}y^{**}(S_{\mathbb{N}\times\{n\}})x_n\right\|=\sum_{n=k_{2i-1}}^{k_{2i}}a^{(i)}_ny^{**}(S_{\mathbb{N}\times\{n\}}).\]
    Let $s_i=\sum_{n=k_{2i-1}}^{k_{2i}}a^{(i)}_nS_{\mathbb{N}\times\{n\}}$ and note that $s_i$ is an $X$-functional thanks to bimonotonicity of $(x_n)_n$. Then 
    \[y^{**}(s_i)=\sum_{n=k_{2i-1}}^{k_{2i}}a^{(i)}_ny^{**}(S_{\mathbb{N}\times\{n\}}) =\left\|\sum_{n=k_{2i-1}}^{k_{2i}}y^{**}(S_{\mathbb{N}\times\{n\}})\theta_n\right\|>\delta, \quad i\in\N.\]
    Thus we can repeat the proof from (1) concluding that $\sum_{n=1}^\infty y^{**}(S_{\mathbb{N}\times\{n\}})\theta_n$ converges in norm.
    
    To finish the proof of (2) we apply  Lemma \ref{Lemma-wuc-Y}(2). As both $y^{**}$ and $\sum_{n=1}^\infty y^{**}(S_{\mathbb{N}\times\{n\}})\theta_n$ are null on $V$, it is enough to check their action on any  $SIX$-functional of the form $S^\mathbf{a}_\mathcal{F}$, $\mathbf{a}\in K$, with  $\mathcal{F}=(\mathbb{N}\times\{n\})_n$. We have
    \[\left(\sum_{n=1}^\infty y^{**}(S_{\mathbb{N}\times\{n\}})\theta_n\right)(S_{\mathcal{F}}^\mathbf{a})=\sum_{n=1}^\infty y^{**}(S_{\mathbb{N}\times\{n\}})\theta_n(S_{\mathcal{F}}^\mathbf{a})=\sum_{n=1}^\infty a_ny^{**}(S_{\mathbb{N}\times\{n\}})=y^{**}(S_{\mathcal{F}}^\mathbf{a})\]
    with the last equality guaranteed by (1).
\end{proof}

Lemmas above yield the following isometric identification of $X$ with $Y^{**}_1\cap V^\perp$.
\begin{proposition}\label{Prop_X=Y_1^**}
  The operator
    \[T:X\ni x\mapsto\sum_{n=1}^\infty x_n^{*}(x)\theta_n\in Y^{**}\]
    is a well-defined isometric isomorphism from $X$ onto $Y^{**}_1\cap V^\perp$. 
\end{proposition}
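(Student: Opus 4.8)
The plan is to verify that the map $T$ is well-defined, injective, bounded below, bounded above, and has image exactly $Y^{**}_1\cap V^\perp$, using the three ingredients already assembled: Lemma \ref{equivalence_x_n_theta_n}, Lemma \ref{property of Y_1**}, and Proposition \ref{Prop_X=Y_1^**}'s predecessors on the structure of $\sS$.

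First I would check that $T$ is a well-defined isometry \emph{onto its image}. Since $(x_n^*)_n$ are the biorthogonal functionals of the bimonotone normalized basis $(x_n)_n$, for each $x\in X$ the partial sums $\sum_{n=1}^N x_n^*(x)x_n$ converge to $x$ in norm, and by bimonotonicity $\|\sum_{n=1}^N x_n^*(x)x_n\|\leqslant\|x\|$. By Lemma \ref{equivalence_x_n_theta_n}, $(\theta_n)_n$ is $1$-equivalent to $(x_n)_n$, so the series $\sum_n x_n^*(x)\theta_n$ converges in $Y^{**}$ (norm Cauchy), and
\[
\Bigl\|\sum_{n=1}^\infty x_n^*(x)\theta_n\Bigr\|=\Bigl\|\sum_{n=1}^\infty x_n^*(x)x_n\Bigr\|=\|x\|.
\]
Thus $T$ is a linear isometry; in particular it is injective. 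Each $\theta_n$ lies in $Y^{**}_1$, and $Y^{**}_1$ is norm-closed (it equals $Y^{**}_{\bB_1}$, or directly: weak$^*$ sequential closures of Baire-1 functionals are again Baire-1 on a metrizable dual ball), so $T(x)\in Y^{**}_1$ for all $x$. Also each $\theta_n$ annihilates $V$: indeed $\theta_n=\lim_m e_{m,n}$ weak$^*$, and $e_{p,q}^*(e_{m,n})=\delta_{(p,q),(m,n)}\to 0$ as $m\to\infty$ for fixed $(p,q)$, so $\theta_n(e_{p,q}^*)=0$; by continuity and linearity $\theta_n\in V^\perp$, hence $T(x)\in Y^{**}_1\cap V^\perp$.

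Next I would show $T$ is surjective onto $Y^{**}_1\cap V^\perp$. Take any $y^{**}\in Y^{**}_1\cap V^\perp$. Apply Lemma \ref{property of Y_1**}(2): the series $\sum_n y^{**}(S_{\N\times\{n\}})\theta_n$ converges in norm to $y^{**}$. Set $a_n:=y^{**}(S_{\N\times\{n\}})$. It remains to recognize $(a_n)_n$ as $(x_n^*(x))_n$ for some $x\in X$. By Lemma \ref{equivalence_x_n_theta_n} again, $\sum_n a_n\theta_n$ converges in norm iff $\sum_n a_n x_n$ converges in norm; call its limit $x\in X$. Then $x_n^*(x)=a_n$ since $(x_n)_n$ is a basis with biorthogonal functionals $(x_n^*)_n$, and $T(x)=\sum_n a_n\theta_n=y^{**}$. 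Hence $T$ maps $X$ onto $Y^{**}_1\cap V^\perp$, completing the proof.

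The step I expect to carry the most weight is the surjectivity direction, and within it the appeal to Lemma \ref{property of Y_1**}(2)—that is where the "point of continuity"/Baire-category argument and the crucial role of the $SIX$-functionals (via Lemma \ref{Lemma-wuc-Y}(2)) are genuinely used; everything on the isometry side is a routine transfer through the $1$-equivalence of $(\theta_n)_n$ and $(x_n)_n$. One small point worth stating explicitly is why $a_n=y^{**}(S_{\N\times\{n\}})$ produces a \emph{convergent} series $\sum_n a_n x_n$: this is immediate once we know $\sum_n a_n\theta_n$ converges in norm (which is exactly Lemma \ref{property of Y_1**}(2) applied to $y^{**}$), combined with the $1$-equivalence; no independent estimate on $(a_n)_n$ is needed.
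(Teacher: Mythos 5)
Your proposal is correct and follows essentially the same route as the paper: well-definedness and the isometric property via the $1$-equivalence of $(\theta_n)_n$ with $(x_n)_n$ (Lemma \ref{equivalence_x_n_theta_n}), the inclusion $T(X)\subset Y^{**}_1\cap V^\perp$ from the definition of the $\theta_n$ together with norm-closedness of $Y^{**}_1$, and surjectivity from Lemma \ref{property of Y_1**}(2). The paper's proof is just a terser version of the same argument, and your identification of Lemma \ref{property of Y_1**}(2) as the step carrying the real weight is accurate.
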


\begin{proof} Observe that the operator $T$ is a well-defined isometry by Lemma \ref{equivalence_x_n_theta_n}. $T(X)\subset Y^{**}_1\cap V^\perp$ by definition of $(\theta_n)_n$ and the fact that $Y^{**}_1$ is closed in norm. The opposite inclusion is guaranteed by Lemma \ref{property of Y_1**}(2). 
\end{proof}

\subsection{Baire functionals of $Y$}
\label{section_baire_Y}
The section is devoted to the study of general Baire functionals of $Y$, and concluded with the proof of Theorem \ref{Theorem-Construction}. 
We start with identifying the candidate for the operator $L$ in Theorem \ref{Theorem-Construction}. 

\begin{lemma}\label{Lemma-operator-L}
   The operator $\tilde P\circ T^{**}: X^{**}\to Y^{**}$, where $\tilde P:Y^{****}\to Y^{**}$ is the Dixmier projection, is both a weak*-weak* homeomorphism onto its weak* closed image and an isomorphic embedding of $X^{**}$ into $Y^{**}$. Moreover, $(\tilde{P}\circ T^{**})(X^{**})$ is complemented in $Y^{**}$. 
\end{lemma}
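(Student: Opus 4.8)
The plan is to analyze the composite $L := \tilde{P} \circ T^{**}$ by exploiting the fact that $Y$ is a dual space. Recall from Proposition \ref{prop_predual_of_Y} that $Y = V^*$ with $V = \clspn\{e_{m,n}^*\} \subset Y^*$, and hence the Dixmier projection $\tilde P : Y^{****} \to Y^{**}$ is the adjoint of the canonical embedding $V \hookrightarrow Y^{**}$; equivalently, $\tilde P$ is weak*-weak* continuous and its restriction to $\iota_{Y^{**}}(Y^{**}) \subset Y^{****}$ is the identity, where $\iota_{Y^{**}}$ is the canonical embedding. Since $T : X \to Y^{**}$ is an isometric embedding (Proposition \ref{Prop_X=Y_1^**}), $T^{**} : X^{**} \to Y^{****}$ is an isometric embedding which is weak*-weak* continuous and extends $\iota_{Y^{**}} \circ T$ on $X$. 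So $L = \tilde P \circ T^{**}$ is automatically weak*-weak* continuous and bounded, and on $X \subset X^{**}$ it agrees with $T$.

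First I would prove that $L$ is an isomorphic embedding. The natural route is to identify $T^{**}(X^{**})$ concretely. Since $T$ maps $X$ isometrically onto $Y_1^{**} \cap V^\perp$, and $T$ is weak*-to-weak* continuous from $X$ into $Y^{**}$ (indeed the $\theta_n$ are fixed weak*-limits and $T$ acts coordinatewise via the $x_n^*$), the second adjoint $T^{**}$ is the unique weak*-weak* continuous extension; one checks that $T^{**}$ lands in $\iota_{Y^{**}}(Y_{\omega_1}^{**}) + (\text{something in } V^\perp$-direction$)$. Actually the cleanest approach: show directly that for $x^{**} \in X^{**}$, the element $L(x^{**}) \in Y^{**}$ satisfies $\norm{L(x^{**})} \geq c\norm{x^{**}}$ for some constant $c > 0$, using Lemma \ref{Lemma-wuc-Y}(2): the $SIX$-functionals are $1$-norming for $Y_{\omega_1}^{**} \cap V^\perp$, and testing $L(x^{**})$ against the $SIX$-functional $S_\mathcal{F}^{\mathbf{a}}$ with $\mathcal{F} = (\N \times \{n\})_n$ and $\mathbf{a} = (a_n)_n \in K$ should yield $x^{**}((a_n)_n)$ — i.e. the value of $x^{**}$ on the functional in $X^*$ represented by $(a_n)_n$ — because $\theta_n$ pairs with $S_\mathcal{F}^{\mathbf{a}}$ to give $a_n$, and this identity passes to the bidual by weak*-weak* continuity of $T^{**}$ and the definition of $K$. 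Then $\sup\{|x^{**}((a_n)_n)| : (a_n)_n \in K\} = \norm{x^{**}}$ since $K$ is norming for $X^{**}$ (it is the image of $B_{X^*}$). This gives the lower norm bound; the upper bound $\norm{L(x^{**})} \leq \norm{T^{**}}\norm{x^{**}} = \norm{x^{**}}$ is immediate since $\tilde P$ is a norm-one projection and $T^{**}$ is an isometry. Weak*-weak* homeomorphism onto the image then follows since a weak*-weak* continuous isomorphic embedding between dual balls is automatically a homeomorphism onto its (weak* compact, hence weak* closed) image.

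Next, for complementedness of $L(X^{**})$ in $Y^{**}$: I would construct an explicit projection. The map $Q : Y^{**} \to X^{**}$ should send $y^{**}$ to the element of $X^{**}$ whose value on $(a_n)_n \in K \subset X^*$ (and then extended linearly/by density to all of $X^*$) is $y^{**}(\sum_n a_n S_{\N \times \{n\}})$ — note $\sum_n a_n S_{\N\times\{n\}}$ is a genuine element of $Y^*$ (a $SIX$-type $X$-functional with infinite $I = \N$). One must check that $y^{**} \mapsto (x^* \mapsto y^{**}(\text{corresponding }X\text{-functional}))$ is a well-defined bounded operator into $X^{**}$; this uses that the assignment $x^* \mapsto \sum_n x^*(x_n) S_{\N\times\{n\}}$ is a bounded (indeed isometric, by bimonotonicity) linear map $X^* \to Y^*$, so its adjoint maps $Y^{**} \to X^{**}$. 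Call this adjoint $Q$. Then $L \circ Q$ is the desired projection: one needs $Q \circ L = \id_{X^{**}}$, which follows from the pairing identity $L(x^{**})(\sum_n x^*(x_n) S_{\N \times \{n\}}) = x^{**}(x^*)$ established above, valid for all $x^* \in X^*$ by linearity and density. Hence $L \circ Q : Y^{**} \to Y^{**}$ is idempotent with range $L(X^{**})$.

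The main obstacle I anticipate is making rigorous the pairing identity $L(x^{**})(\text{SIX-functional}) = x^{**}(\cdot)$ at the level of biduals — i.e. lifting the computation from $X$ (where it is Lemma \ref{equivalence_x_n_theta_n} / the definition of $\theta_n$) to $X^{**}$. The subtlety is that $\tilde P \circ T^{**}$ involves a fourth dual, and one must track carefully that the Dixmier projection $\tilde P$, applied to $T^{**}(x^{**}) \in Y^{****}$, does not destroy the value on elements of $V$ — but every $SIX$-functional is weak*-approximable by elements of $\mathcal{S} \cap V$ (Proposition \ref{prop_predual_of_Y}(3)), and $\tilde P(\eta)$ for $\eta \in Y^{****}$ is by definition $\eta \restriction \iota_Y(Y)$... no, rather $\tilde P$ restricts to the predual copy of $Y$ inside $Y^{***}$; the point is that $\tilde P \circ T^{**}$ evaluated on $v \in V \subset Y^*$ equals $T^{**}(x^{**})(\iota_{Y^*}(v))$ where we view $v$ canonically in $Y^{***}$. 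One resolves this by choosing a weak*-null net $(v_\gamma) \subset \mathcal{S} \cap V$ converging weak* to a given $SIX$-functional $s$, using weak*-weak* continuity of $T^{**}$ and the fact that $T^{**}(x^{**})$ restricted to $V$ is computed as a limit of $T(x_\beta)$ values for a net $x_\beta \to x^{**}$ weak* in $B_{X^{**}}$ (Goldstine), and then invoking the already-established behaviour of $T$ on $X$ together with the uniform bound. Once this identity is nailed down, everything else is bookkeeping.
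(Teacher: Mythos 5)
Your proposal is correct and follows essentially the same route as the paper: your map $x^*\mapsto\sum_n x^*(x_n)S_{\N\times\{n\}}$ is the paper's operator $J$, your $Q$ is $J^*$, your pairing identity is the paper's computation $\theta_n(S^{\mathbf a}_{\mathcal F})=a_n$ giving $(T^*\circ j)\circ J=\id_{X^*}$, and $L\circ Q$ is the same projection. The paper just packages this more cleanly by writing $L=R^*$ with $R=T^*\circ j$ and taking adjoints of $R\circ J=\id_{X^*}$, which also makes your worried net-approximation detour at the end (and the closing weak*-homeomorphism step, which in your version should be read off from the weak*-weak* continuity of $Q=J^*$ inverting $L$ on its image) unnecessary.
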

\begin{proof}

Consider operators  $R:=T^*\circ j:Y^*\to X^*$, where   $j:Y^*\hookrightarrow Y^{***}$ is the canonical embedding and $T: X\to Y^{**}$ is as in Proposition \ref{Prop_X=Y_1^**}, and $J:X^*\to Y^*$ given by the formula
\[J:X^*\ni x^*\mapsto S_{\mathcal{F}}^{(x^*(x_n))_n}\in Y^*,\]
where $\mathcal{F}:=(\mathbb{N}\times\{n\})_n$. The operator $J$ is bounded, as for any $x^*\in B_{X^*}$, $Jx^*$ is an $X$-functional by definition. We claim that $R\circ J=id_{X^*}$. Indeed, note that for any $\mathbf{a}=(a_n)_n\in K$,  
\[T^*(j(S_{\mathcal{F}}^\mathbf{a}))(x_n)=j(S^\mathbf{a}_{\mathcal{F}})(\theta_n)=\theta_n(S_\mathcal{F}^\mathbf{a})=a_n, \quad n\in\N.\]
It follows that $R(S_{\mathcal{F}}^\mathbf{a})=\sum_{n=1}^\infty a_nx_n^*$ for any $\mathbf{a}=(a_n)_n\in K$, thus $R\circ J=id_{X^*}$ as claimed. 

In particular, $R$ is a surjection onto $X^*$. Therefore, $\tilde P\circ T^{**}=R^*$ is a weak*-weak* homeomorphism onto its weak* closed image, as it is injective and weak*-weak* open (onto its image) by surjectivity of $R$, thus also a norm isomorphism onto its image (use the Closed Range theorem).

Moreover, as $R\circ J=id_{X^*}$, $(J\circ R)^*: Y^{**}\to Y^{**}$ is a projection onto $R^{*}(X^{**})=(\tilde P\circ T^{**})(X^{**})$. \end{proof}

It follows easily that the isomorphic embedding $X^{**}\hookrightarrow Y^{**}$ from Lemma \ref{Lemma-operator-L} above, restricted to $X$, coincides with the operator $T$ from Proposition \ref{Prop_X=Y_1^**}, thus carries $X$ to $Y_1^{**}\cap V^\perp$. In order to prove that the embedding in question identifies, moreover, any class of Baire-$\alpha$ functionals of $X$ with the trace of the class of Baire-$(1+\alpha)$ functionals of $Y$ on $V^\perp$, as required in Theorem \ref{Theorem-Construction}, one needs to match weak* topologies on $X^{**}$ and its image in $Y^{**}$. Suitable tools are presented below, based on the key Lemma \ref{Lemma-wuc-Y}, general observations of Section \ref{Section_general} and structure of the norming set $\mathcal{S}$.

Recall that the space $Y$ is the dual space of $V$ (Proposition \ref{prop_predual_of_Y}), denote by $P$ the Dixmier projection  $P:Y^{**}\to Y=V^*$. 

The next lemma makes up for the fact that the operator $\iota\circ P:Y^{**}\to Y^{**}$ is not necessarily weak*-weak* continuous, where $P:Y^{**}\to Y$ is the Dixmier projection and $\iota: Y\hookrightarrow Y^{**}$ is the canonical embedding (whereas obviously $P:Y^{**}\to Y=V^*$ is weak*-weak* continuous) and forms a substantial step in the proof of Theorem \ref{Theorem-Construction}.

\begin{lemma} \label{Lemma-Dixmier-weaklyCauchy} If $(y_n^{**})_{n=1}^\infty\subset Y_{\omega_1}^{**}$ weak* converges then $(Py^{**}_n)_{n=1}^\infty\subset Y$ has a weak Cauchy subsequence. 
\end{lemma}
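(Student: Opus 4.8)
The plan is to analyze the sequence $(Py_n^{**})_n \subset Y$ using the structure of the norming set $\mathcal{S}$ and the fact that $Y = V^*$. Since the ball of $Y$ is weak* metrizable (as $V$ is separable), it suffices to produce a subsequence along which $(x^*(Py_n^{**}))_n$ converges for every $x^* \in V$, i.e., along which $(e_{m,k}^*(Py_n^{**}))_n$ converges for every $(m,k) \in \Lambda$; by a diagonal argument we may extract such a subsequence, so the content is to show that this pointwise-convergent-on-$V$ subsequence is in fact weak Cauchy in $Y$, equivalently weak* convergent when viewed in $Y^{**}$. So the real task is to understand, for a bounded sequence $(z_n)_n \subset Y$ that converges weak* in $Y = V^*$, whether it is weak Cauchy in $Y$; equivalently, I want to control the action of $(z_n)_n$ on all of $\mathcal{S}$, not just on $V \cap \mathcal{S}$.

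First I would reduce to the case where $y_n^{**} \to y^{**}$ weak* and show that $Py_n^{**} \to Py^{**}$ in the weak* topology of $Y = V^*$, which is immediate since $P : Y^{**} \to Y = V^*$ is weak*-weak* continuous. The difficulty is upgrading weak* convergence in $V^*$ to weak convergence in $Y$. Here I would exploit Lemma~\ref{structure of S}(3)--(4) and Lemma~\ref{wuc}: every $s \in \mathcal{S}$ is either a $WUC$-functional (a weak* limit of a weakly unconditionally Cauchy series of finitely supported elements of $\mathcal{S}$) or of the form $s_0 + s_\infty$ with $s_0$ finitely supported and $s_\infty$ a $SIX$-functional. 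For the $WUC$ part, I would use that $y^{**}$ and $y_n^{**}$ all lie in $Y^{**}_{\omega_1}$, so Lemma~\ref{wuc} lets me compute their values on such $s$ as absolutely convergent sums of their values on finitely supported pieces, and the action on finitely supported functionals is eventually that of $Py_n^{**}$ (since $\iota \circ P$ agrees with the identity on functionals in $V$, and finitely supported functionals lie in $V$); so on the $WUC$ part, $z_n = Py_n^{**}$ behaves well. The genuinely problematic contribution is the $SIX$-functionals $s_\infty$, on which $\iota \circ P$ need not be weak*-weak* continuous — this is exactly the gap the lemma is meant to bridge.

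For the $SIX$-functionals, I would use Lemma~\ref{lemma_sequences_in_Y}(2) together with Theorem~\ref{Theorem-AHProperties}(1): for each fixed $k$, the block $\clspn\{e_{m,k} : m \geq k\}$ is isometric to the Azimi-Hagler space, whose canonical basis is \emph{boundedly complete}. The value of $Py_n^{**}$ on the $SIX$-functional $\sum_m a_m S_{I_m \times \{k\}}$ (restricted to one fixed row $k$) is governed by the coordinates of $Py_n^{**}$ inside that Azimi-Hagler copy, and bounded completeness there, combined with convergence of the coordinates, should force convergence of these values along a subsequence. More precisely, I would: extract a subsequence along which all coordinates $e_{m,k}^*(Py_n^{**})$ converge; argue that the partial sums of the limiting coordinate sequence are uniformly bounded (using the uniform bound on $\|Py_n^{**}\|$ together with bimonotonicity), hence by bounded completeness the limiting coordinate sequence defines a genuine element $z \in Y$; and then show $z_n \to z$ weakly by checking convergence of $s(z_n) \to s(z)$ separately on the $WUC$ part (done above) and on each $SIX$-row, using that convergence of the Azimi-Hagler coordinates plus the uniform bound yields convergence of the summing-type functionals (a standard summation-by-parts / dominated convergence argument on $\ell_1$-type coefficients $(a_m)$). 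Since $X$-functionals with finite support lie in $V$ and $AH$-functionals decompose across finitely many rows plus one infinite piece handled by the $SIX$ argument, this covers all of $\mathcal{S}$, and by Lemma~\ref{norming-set} (with $\mathcal{S}$ weak* compact and $1$-norming) pointwise convergence on $\mathcal{S}$ of the bounded sequence $(z_n)$ yields its weak* convergence in $Y^{**}$, i.e. weak Cauchyness in $Y$.

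The main obstacle I anticipate is precisely the interaction on $SIX$-functionals between the Dixmier projection $P$ and weak* limits: $\iota \circ P$ is not weak*-weak* continuous, so I cannot simply say $s_\infty(Py_n^{**}) \to s_\infty(Py^{**})$. The resolution must go through the \emph{boundedly complete} basis structure of the Azimi-Hagler rows — this is the only place where "boundedly complete" (as opposed to merely "spreading" or "bimonotone") is essential, and it is what guarantees that the coordinatewise limit of $(Py_n^{**})$ is an honest vector in $Y$ rather than something escaping to the bidual. Making the summation-by-parts estimate uniform in the tail of the $SIX$-functional (so that a single subsequence works simultaneously for all $SIX$-functionals, using weak* compactness of $\mathcal{S}$ to reduce to countably many) is the technical heart of the argument.
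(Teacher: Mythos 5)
Your reduction to controlling $(Py_n^{**})_n$ on the norming set $\mathcal{S}$, and your treatment of the $WUC$-functionals via Lemma \ref{wuc} (equivalently, Lemma \ref{Lemma-wuc-Y}(1) applied to $y_n^{**}-\iota(Py_n^{**})\in Y^{**}_{\omega_1}\cap V^\perp$), are correct and agree with the paper. The gap is in the part you yourself flag as the technical heart, the $SIX$-functionals, and it is not just a missing estimate: the mechanism you propose rests on a false statement. You want to take the coordinatewise limit $z\in Y$ of $z_n:=Py_n^{**}$ (which bounded completeness does provide) and then prove $s(z_n)\to s(z)$ for all $s\in\mathcal{S}$. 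But $z$ is in general \emph{not} the weak* limit of $(z_n)_n$: take $y_n^{**}=\iota(e_{n,1})$, a weak* convergent sequence in $Y^{**}_{\omega_1}$ with limit $\theta_1$; then $z_n=e_{n,1}$ converges coordinatewise to $z=0$, yet $S_{\N\times\{1\}}(z_n)=1$ for all $n$, so $s(z_n)\not\to s(z)$ for the $SIX$-functional $s=S_{\N\times\{1\}}$. The true limit of $(z_n)_n$ lies in $Y^{**}$ and typically outside $Y$, so no argument whose conclusion is ``$z_n\to z$ weakly with $z\in Y$'' can work. Two further steps would still need repair even after changing the target: the coefficients $\mathbf{a}\in K$ of a $SIX$-functional need not be summable (for $X=\ell_1$ one has $K=[-1,1]^{\N}$), so there is no $\ell_1$-type summation by parts over the infinitely many rows; and pointwise convergence of a bounded sequence on a countable weak* dense subset of $\mathcal{S}$ does not imply pointwise convergence on all of $\mathcal{S}$, so weak* compactness does not reduce the problem to countably many functionals in the way you suggest.

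The paper's proof goes the other way around. After passing to a subsequence along which the countably many scalar sequences $(S_{I\times\{k\}}(Py_n^{**}))_n$ ($I$ an infinite interval, $k\in\N$) converge --- Bolzano--Weierstrass plus a diagonal argument, with no appeal to bounded completeness --- one assumes the projected sequence is not weak Cauchy and extracts, via Lemmas \ref{norming-set} and \ref{structure of S}(4), a single $s\in\mathcal{S}$ and differences $z_l$ with $s(z_l)>\delta$. The restriction of $s$ to any finite union of rows $\N\times[1,k]$ is shown to act as $o(1)$ on the $z_l$, using exactly your two cases: $WUC$-functionals via Lemma \ref{Lemma-wuc-Y}(1), and otherwise a \emph{finite} linear combination of the prearranged functionals $S_{I\times\{j\}}$, $j\leqslant k$. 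This permits truncating $s$ to a diagonal union $D$ of finite $\Lambda$-rectangles that retains more than $\delta/2$ of its action along a sub-subsequence while keeping the cross terms summable. By Lemma \ref{structure of S}(2) the truncation $\chi_D\cdot s$ lies in $\mathcal{S}$ and is a $WUC$-functional, on which $\iota(Py_n^{**})$ and $y_n^{**}$ agree, contradicting the weak* convergence of $(y_n^{**})_n$. The decisive idea your proposal is missing is this diagonal truncation converting the problematic functional into a $WUC$ one; bounded completeness of the Azimi--Hagler rows plays no role here beyond guaranteeing, via Proposition \ref{prop_predual_of_Y}, that the Dixmier projection $P$ exists at all.
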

\begin{proof}
    Put $y_n:=Py_n^{**}$, $n\in\N$. Obviously $y_n^{**}-y_n\in Y^{**}_{\omega_1}\cap V^\perp$, $n\in\N$. Passing to a subsequence and relabeling we can assume that $(S_{I\times\{k\}}(y_n))_n$ converges for every infinite interval $I\subset\N$ and $k\in\N$.
    
    Suppose that $(y_n)_n$ is not weak Cauchy (i.e. not weak* convergent in $Y^{**}$). By Lemma \ref{norming-set} and Lemma \ref{structure of S}(4) there is $s\in\mathcal{S}$,  $(n_l)_l\subset\mathbb{N}$ and $\delta>0$ such that
    \[s(y_{n_{2l}}-y_{n_{2l-1}})>\delta, \quad l\in\N.\]
    Let $z_l:=y_{n_{2l}}-y_{n_{2l-1}}$, $l\in\N$. Observe that $( \chi_{\N\times [1,k]}\cdot s)(z_l)\xrightarrow{l\to\infty} 0$  for every $k\in\mathbb{N}$. Indeed, if $\chi_{\N\times [1,k]}\cdot s$ is a $WUC$-functional, then
    \[(\chi_{\N\times [1,k]}\cdot s)(z_l)=(y^{**}_{n_{2l}}-y^{**}_{n_{2l-1}})(\chi_{\N\times[1,k]}\cdot s)\xrightarrow{l\to\infty}0\]
    where the first equality follows from Lemma \ref{Lemma-wuc-Y}(1), and the limit is zero as $(y_{n}^{**})_n$ is weak* converging. Otherwise, by Lemma \ref{structure of S}(3), $\chi_{\N\times[1,k]}\cdot s=s_0+s_\infty$
    for some $s_0\in\mathcal{S}\cap V$ and  a $SIX$-functional $s_\infty$     with $s_\infty\in\spn\{S_{I\times\{j\}}:1\leqslant j\leqslant k, I\text{ infinite interval in }\N\}$. Hence, as $y_n^{**}-y_n\in Y^{**}_{\omega_1}\cap V^\perp$, $n\in\N$, and as $(s_\infty(y_n))_n$ converges by the assumption, we get
    \[ (\chi_{\N \times [1,k]} \cdot s)(z_l) = (y_{n_{2l}}^{**} - y_{n_{2l-1}}^{**}) (s_0) + s_\infty(z_l) \xrightarrow{l\to\infty} 0.\]    
    
   Hence, as $(e_{m,n})_{(m,n)\in\Lambda}$ is a basis of $Y$, we can pick inductively a diagonal sequence of $\Lambda$-rectangles $(F_i)_i$ and $(l_i)_i\subset\N$ such that $(\chi_{F_i}\cdot s)(z_{l_i})>\delta/2$ for each $i\in\N$ and $(\chi_{F_j}\cdot s)(z_{l_i})<\delta/8^{\min\{i,j\}}$ for all $i\neq j$. Set $D=\bigcup_i F_i$ and note that $\chi_D\cdot s\in\mathcal{S}$ is a $WUC$-functional by Lemma \ref{structure of S}(2), thus $y_n^{**}(\chi_D\cdot s)=(\chi_D\cdot s)(y_n)$, $n\in\N$, by Lemma \ref{Lemma-wuc-Y}. Therefore we arrive at a contradiction with the  weak* convergence of $(y_n^{**})_n$, as $(\chi_D\cdot s)(z_{l_i})>\delta/4$ for each $i$, thus $((\chi_D\cdot s)(y_n))_n$ does not converge by the choice of $D$.  
\end{proof}

\begin{corollary}\label{sequential-density}
$\bigcup_{\beta<\alpha}Y_\beta^{**}\cap V^\perp$ is weak* sequentially dense in $Y_\alpha^{**}\cap V^\perp$ for any $\alpha\leqslant\omega_1$.
\end{corollary}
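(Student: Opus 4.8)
The plan is to prove the corollary by induction on $\alpha \leqslant \omega_1$, with the successor step being the only substantial one (limit ordinals and $\alpha = 0$ are immediate from the definition of Baire classes). So fix $\alpha$ and suppose $y^{**} \in Y_\alpha^{**} \cap V^\perp$; by definition there is a sequence $(y_n^{**})_n \subset \bigcup_{\beta < \alpha} Y_\beta^{**}$ with $y_n^{**} \xrightarrow{w^*} y^{**}$. The obvious issue is that the $y_n^{**}$ need not lie in $V^\perp$, so they are not yet witnesses for weak* sequential density \emph{inside} $V^\perp$. The natural fix is to replace $y_n^{**}$ by $y_n^{**} - \iota(P y_n^{**})$, where $P : Y^{**} \to Y = V^*$ is the Dixmier projection and $\iota : Y \hookrightarrow Y^{**}$ the canonical embedding: this difference lies in $V^\perp$, it still lies in $\bigcup_{\beta < \alpha} Y_\beta^{**}$ (as $Y \subset Y_\beta^{**}$ for every $\beta \geqslant 1$, and for the base case $\beta = 0$ one treats $\alpha = 1$ separately using Proposition \ref{Prop_X=Y_1^**}, or simply notes $1 \leqslant \beta$ suffices once $\alpha \geqslant 2$, handling $\alpha = 1$ by hand), and we would like it to still converge weak* to $y^{**}$.

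The point where the real work happens is precisely showing that $(y_n^{**} - \iota(P y_n^{**}))_n$ has a weak* convergent subsequence with limit $y^{**}$, and here I would invoke Lemma \ref{Lemma-Dixmier-weaklyCauchy}: since $(y_n^{**})_n$ weak* converges, that lemma gives a subsequence along which $(P y_n^{**})_n$ is weak Cauchy in $Y$, i.e. $(\iota(P y_n^{**}))_n$ is weak* convergent in $Y^{**}$, say to some $\eta^{**} \in Y_1^{**}$. Consequently along this subsequence $y_n^{**} - \iota(P y_n^{**}) \xrightarrow{w^*} y^{**} - \eta^{**}$, which is an element of $Y_\alpha^{**} \cap V^\perp$ lying in the weak* sequential closure of $\bigcup_{\beta < \alpha} Y_\beta^{**} \cap V^\perp$. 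It remains to identify $y^{**} - \eta^{**}$ with $y^{**}$, equivalently to check $\eta^{**} = 0$: one applies $P$ (which is weak*-weak* continuous as $Y = V^*$) to $y_n^{**} - \iota(P y_n^{**})$, getting $P y_n^{**} - P y_n^{**} = 0$ for every $n$, so $P(y^{**} - \eta^{**}) = 0$; but $y^{**} - \eta^{**} \in V^\perp$ means exactly $P(y^{**}-\eta^{**}) = 0$ automatically — so instead I would argue directly that $P y^{**} = 0$ (as $y^{**} \in V^\perp$) and $P \eta^{**} = \lim_n P(\iota(P y_n^{**})) = \lim_n P y_n^{**}$, while also $P y^{**} = \lim_n P y_n^{**}$ by weak*-weak* continuity of $P$ applied to the original sequence; hence $P\eta^{**} = 0$, and since $\eta^{**} \in Y_1^{**}$ we can write $\eta^{**} = \iota(P\eta^{**}) = 0$ because the Dixmier projection restricted to $Y_1^{**}$...

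Here I should be more careful: $\eta^{**}$ is a weak* limit of the $\iota(P y_n^{**}) \in \iota(Y)$, so $\eta^{**} \in Y_1^{**}$, but $P\eta^{**}$ equals the weak Cauchy limit computed in $Y = V^*$, and $\eta^{**} - \iota(P\eta^{**}) \in Y_1^{**} \cap V^\perp$. Thus $y^{**} - \eta^{**} = (y^{**} - \iota(P\eta^{**})) - (\eta^{**} - \iota(P\eta^{**}))$; both $y^{**}$ and $\eta^{**} - \iota(P\eta^{**})$ lie in $V^\perp$, and $\iota(P\eta^{**}) \in \iota(Y)$. Rather than chase this, the cleanest route is: the sequence $z_n^{**} := y_n^{**} - \iota(Py_n^{**})$ lies in $(\bigcup_{\beta<\alpha} Y_\beta^{**}) \cap V^\perp$ and is weak* convergent along a subsequence to some $z^{**} \in Y_\alpha^{**} \cap V^\perp$; applying the weak*-weak* continuous $P$ shows $P z^{**} = 0$ (consistent with $z^{**} \in V^\perp$), and applying an arbitrary $SIX$-functional and using Lemma \ref{Lemma-wuc-Y}(2) together with the fact that $SIX$-functionals annihilate... no. The honest statement is that $z^{**}$ and $y^{**}$ agree on $V$ (both give $0$) and we need them to agree on all of $Y^*$; equivalently $y^{**} - z^{**} = \iota(\lim_n Py_n^{**}) \in \iota(Y)$, and $y^{**} - z^{**} \in V^\perp$ since both terms are, forcing $y^{**} - z^{**} \in \iota(Y) \cap V^\perp = \{0\}$. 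That last intersection is zero because $\iota(y) \in V^\perp$ means $y$ annihilates $V = \clspn\{e_{m,n}^*\}$, hence $y = 0$. So $z^{**} = y^{**}$, completing the induction; the main obstacle, as flagged, is supplying the weak* convergence of the corrected sequence, which is exactly what Lemma \ref{Lemma-Dixmier-weaklyCauchy} delivers.
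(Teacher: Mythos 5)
You have the right decomposition ($y_n^{**}\mapsto y_n^{**}-\iota(Py_n^{**})$) and the right key tool (Lemma \ref{Lemma-Dixmier-weaklyCauchy}), which is exactly the paper's setup, but the endgame contains a genuine error. Your corrected sequence converges weak* to $y^{**}-\eta^{**}$, where $\eta^{**}$ is the weak* limit of $\big(\iota(Py_n^{**})\big)_n$, and you need $\eta^{**}=0$; your justification is that $\eta^{**}=\iota\big(\lim_n Py_n^{**}\big)\in\iota(Y)$, whence $\eta^{**}\in\iota(Y)\cap V^\perp=\{0\}$. This is where it fails: Lemma \ref{Lemma-Dixmier-weaklyCauchy} only makes $(Py_n^{**})_n$ weak \emph{Cauchy} in $Y$, not weakly convergent, so $\eta^{**}$ lies in $Y_1^{**}$ but in general not in $\iota(Y)$. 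Indeed, arguing exactly as you do for $Pz^{**}$, one gets $\eta^{**}\in Y_1^{**}\cap V^\perp$, which by Proposition \ref{Prop_X=Y_1^**} is an isometric copy of $X$, so $\eta^{**}$ has no reason to vanish. Concretely: take any weak* convergent $(u_n^{**})_n\subset Y_1^{**}\cap V^\perp$ with limit $u^{**}$, put $y^{**}:=u^{**}+\theta_1\in Y_2^{**}\cap V^\perp$ and $y_n^{**}:=u_n^{**}+\iota(e_{n,1})\in Y_1^{**}$. Then $y_n^{**}\xrightarrow{w^*}y^{**}$ and $Py_n^{**}=e_{n,1}$, so $\eta^{**}=\theta_1\neq 0$, and your sequence $y_n^{**}-\iota(Py_n^{**})=u_n^{**}$ converges weak* to $y^{**}-\theta_1$, not to $y^{**}$.

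The missing idea is not to kill $\eta^{**}$ but to absorb it: since $\eta^{**}\in Y_1^{**}\cap V^\perp$ and the statement only has content for $\alpha\geqslant 2$, the sequence $y_n^{**}-\iota(Py_n^{**})+\eta^{**}$ still lies in $\bigcup_{\beta<\alpha}Y_\beta^{**}\cap V^\perp$ and converges weak* to $(y^{**}-\eta^{**})+\eta^{**}=y^{**}$. This one-line modification is exactly the paper's proof. (Your hesitation about $\alpha=1$ is also warranted: there $\bigcup_{\beta<1}Y_\beta^{**}\cap V^\perp=\iota(Y)\cap V^\perp=\{0\}$ while $Y_1^{**}\cap V^\perp\cong X$, so that case cannot be ``handled by hand''; the corollary is only ever applied for $\alpha\geqslant 2$.)
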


\begin{proof}
Fix $y^{**}\in Y_\alpha^{**}\cap V^\perp$. By Lemma \ref{Lemma-Dixmier-weaklyCauchy}  there is a sequence $(y_n^{**})_n\subset\bigcup_{\beta<\alpha}Y_{\beta}^{**}$ weak* convergent to $y^{**}$ such that $(Py^{**}_n)_n$ weak* converges in $Y^{**}$. Let $z^{**}$ be the weak* limit of $(Py_n^{**})_n$ and note that $z^{**}\in Y_1^{**}\cap V^\perp$, as $Py_n^{**}(v)=y_n^{**}(v)\xrightarrow{n\to\infty} 0$ for each $v\in V$ by the assumption on $y^{**}$. Then the sequence $(y_n^{**}-Py_n^{**}+z^{**})_n\subset\bigcup_{\beta<\alpha}Y_\beta^{**}\cap V^\perp$ weak* converges to $y^{**}$. 
\end{proof}

\begin{proof}[Proof of Theorem \ref{Theorem-Construction}] For a Banach space $X$ with a normalized bimonotone basis $(x_n)_n$ take the space $Y$ defined in Section \ref{section_def_Y}. The space $Y$ has a  boundedly complete basis by Proposition \ref{prop_predual_of_Y}(1) and is $\ell_1$-saturated by Lemma \ref{lemma_sequences_in_Y}(3). We shall prove that the embedding $L=\tilde P\circ T^{**}: X^{**}\hookrightarrow Y^{**}$, with the operator $T: X\to Y^{**}$ from Proposition \ref{Prop_X=Y_1^**} and Dixmier projection $\tilde P: Y^{****}\to Y^{**}$,  satisfies the assertion of Theorem \ref{Theorem-Construction}. By Lemma \ref{Lemma-operator-L} it is enough to verify
that $Y_{1+\alpha}^{**} = Y\oplus L(X_{\alpha}^{**})$, for any $\alpha\leqslant\omega_1$ (identifying in notation $Y$ and its canonical image in $Y^{**}$). 

Obviously, for any $\alpha\leqslant\omega_1$,
\[Y_{\alpha}^{**}=(P(Y^{**})\cap Y^{**}_\alpha)\oplus (\text{ker} P\cap  Y^{**}_\alpha)=\iota(Y)\oplus (Y_\alpha^{**}\cap V^\perp),\]
therefore we need to check that  $L(X^{**}_\alpha)=Y_{1+\alpha}^{**}\cap V^\perp$ for every $\alpha \leqslant \omega_1$. We show this equality by induction. Firstly, observe that $L\restrict{X}=T$ by the definition of operators involved. Therefore the case $\alpha=0$ is settled by Proposition \ref{Prop_X=Y_1^**}. 
    
Fix now $1\leqslant\alpha\leqslant\omega_1$ and assume the assertion holds for all $\beta<\alpha$. For $L(X^{**}_\alpha)\subset Y_{1+\alpha}^{**}\cap V^\perp$ take any  $x^{**}\in X_{\alpha}^{**}$ and a sequence $(x_n^{**})_n\subset\bigcup_{\beta<\alpha}X_{\beta}^{**}$ weak* convergent to $x^{**}$. By weak*-weak* continuity of $L$ and the inductive assumption, the sequence $(Lx^{**}_n)_n\subset \bigcup_{\beta<\alpha}Y_{1+\beta}^{**}$ weak* converges to $Lx^{**}$, hence $Lx^{**}\in Y_{1+\alpha}^{**}$. 
    
For the opposite inclusion, given $y^{**}\in Y_{1+\alpha}^{**}\cap V^\perp$, pick by Corollary \ref{sequential-density}  a sequence $(y_n^{**})_n\subset\bigcup_{\beta<1+\alpha}Y_{\beta}^{**}\cap V^\perp$ weak* convergent to $y^{**}$. For finite $\alpha$, $(y_n^{**})_n \subset Y_{\alpha}^{**}\cap V^{\perp}$ and thus by the inductive hypothesis $(L^{-1}(y_n^{**}))_n\subset X_{\alpha-1}^{**}$. For infinite $\alpha$,
    \[\bigcup_{\beta<1+\alpha}Y_{\beta}^{**}\cap V^\perp=\bigcup_{\beta<\alpha}Y_{1+\beta}^{**}\cap V^\perp,\]
hence by the inductive hypothesis $(L^{-1}(y^{**}_n))_n\subset\bigcup_{\beta<\alpha}X_\beta^{**}$. In both cases, by Lemma \ref{Lemma-operator-L}, $(L^{-1}(y_n^{**}))_n$ weak* converges to $L^{-1}(y^{**})\in X_\alpha^{**}$.

Finally, we prove that $\ord_\bB(Y) = 1 + \ord_\bB(X)$.
For $\alpha:=\ord_\bB(X)$ we have
     \[Y^{**}_{\omega_1}=Y\oplus L(X^{**}_{\omega_1})=Y\oplus L(X^{**}_\alpha)=Y^{**}_{1+\alpha},\]
     thus $\ord_\bB(Y)\leqslant 1+\alpha$. 
Let $\beta:=\ord_\bB(Y)$ and note that $\beta\geqslant 1$. For finite $\beta$,
\[Y\oplus L(X^{**}_{\beta-1})=Y^{**}_\beta=Y^{**}_{\omega_1}=Y\oplus L(X^{**}_{\omega_1}).\]
As $L$ is injective, $X^{**}_{\omega_1}=X^{**}_{\beta-1}$, thus $1+\ord_\bB(X)\leqslant \beta$. For infinite $\beta$, 
\[Y\oplus L(X^{**}_\beta)=Y^{**}_{1+\beta}=Y^{**}_\beta=Y^{**}_{\omega_1}=Y\oplus L(X^{**}_{\omega_1}),\]
and, as above, $\ord_\bB(X)\leqslant\beta$, thus $1+\ord_\bB(X)\leqslant 1+\beta=\beta$, which ends the proof.
\end{proof}

\section{Main results}
We present here corollaries of Theorem \ref{Theorem-Construction}, providing answers to the questions stated in the introduction. First we provide the following solution of Problem \ref{Problem-NontrivialOrder} for $\alpha\leqslant\omega$. 

\begin{theorem} \label{Theorem-MainOrder}
    For every $\alpha \leqslant\omega$ there is an $\ell_1$-saturated separable Banach space $Y(\alpha)$ of the Baire order $\alpha$. 
\end{theorem}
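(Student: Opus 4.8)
The plan is to build the spaces $Y(\alpha)$ by combining the core construction of Theorem~\ref{Theorem-Construction} with an $\ell_1$-sum argument, using the machinery of Proposition~\ref{Prop-Order-Baire1-Sum} and Lemma~\ref{Lemma_ell_1_saturation}. First I would handle the finite case by induction on $n\leqslant\omega$: set $Y(0)$ to be any reflexive separable $\ell_1$-saturated space (for instance a reflexive space built from the Azimi-Hagler recipe, or even just observe $Y(0)=\{0\}$ works trivially, but a nonzero example such as $\ell_2$ fails $\ell_1$-saturation, so one should instead take a Tsirelson-type space; in fact the cleanest choice is to start the induction at $n=1$ with $Y(1)=X_{AH}$, which is $\ell_1$-saturated by Theorem~\ref{Theorem-AHProperties}(4) and has $\ord_\bB(X_{AH})=1$ since $(X_{AH})^{**}_1=X_{AH}\oplus\R$ while $X_{AH}$ is not weakly sequentially complete). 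Then, given $Y(n)$ — an $\ell_1$-saturated separable space with a basis and $\ord_\bB(Y(n))=n$ — apply Theorem~\ref{Theorem-Construction} with $X=Y(n)$ to obtain an $\ell_1$-saturated space with a boundedly complete (in particular Schauder) basis and Baire order $1+n=n+1$; call it $Y(n+1)$. This inductively produces $Y(n)$ for every finite $n\geqslant 1$, and $Y(0)$ can be taken to be any reflexive $\ell_1$-saturated separable space, e.g.\ a reflexive Tsirelson space, which has $\ord_\bB=0$.

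For $\alpha=\omega$, the plan is to set $Y(\omega):=\bigl(\bigoplus_{n\geqslant 1} Y(n)\bigr)_{\ell_1}$. By Lemma~\ref{Lemma_ell_1_saturation} this space is $\ell_1$-saturated, and it is clearly separable. Since $(\ord_\bB(Y(n)))_{n\geqslant 1}=(n)_{n\geqslant 1}$ is a strictly increasing sequence, Proposition~\ref{Prop-Order-Baire1-Sum} gives directly that $\ord_\bB(Y(\omega))=\sup_n\ord_\bB(Y(n))=\omega$. That completes the construction for all $\alpha\leqslant\omega$.

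The step I expect to require the most care is making sure the inductive hypothesis is preserved at each stage, specifically that $Y(n)$ genuinely has a Schauder basis so that Theorem~\ref{Theorem-Construction} can be applied again — this is guaranteed because Theorem~\ref{Theorem-Construction} outputs a space with a (boundedly complete) basis — and that $\ord_\bB(Y(n))$ is exactly $n$ rather than something smaller. The latter is where the ``$1+\ord_\bB$'' clause of Theorem~\ref{Theorem-Construction} does the work: from $\ord_\bB(Y(n))=n\geqslant 1$ we get $\ord_\bB(Y(n+1))=1+n=n+1$, with no loss. One should also double-check the base case: for $Y(1)=X_{AH}$ one needs $\ord_\bB(X_{AH})=1$, i.e.\ $(X_{AH})^{**}_1=(X_{AH})^{**}_2$ but $(X_{AH})^{**}_0\neq(X_{AH})^{**}_1$; the first holds since $(X_{AH})^{**}_1=X_{AH}\oplus\R\theta$ is already weak$^*$ sequentially closed (as $X_{AH}$ is a dual space, by Theorem~\ref{Theorem-AHProperties}(1), the bidual ball is weak$^*$ sequentially compact and the description of Baire-1 functionals forces stabilization), and the second holds because $(e_n)$ is a nontrivial weak Cauchy sequence, so $\theta\notin X_{AH}$. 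Finally, for $\alpha=0$ one simply records that a reflexive $\ell_1$-saturated separable space exists, e.g.\ Tsirelson's space $T^*$, whose order is $0$ since $X=X^{**}$.
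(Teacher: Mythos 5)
Your treatment of the cases $1\leqslant\alpha<\omega$ and $\alpha=\omega$ is essentially the paper's proof: an induction powered by Theorem~\ref{Theorem-Construction} (the paper seeds it at $Y(0)=\ell_1$ and obtains $Y(1)$ by one application of the construction, whereas you seed it at $Y(1)=X_{AH}$ directly, which is a harmless variation since $\ord_\bB(X_{AH})=1$ does follow from Theorem~\ref{Theorem-AHProperties}(2),(3)), followed by the $\ell_1$-sum argument via Proposition~\ref{Prop-Order-Baire1-Sum} and Lemma~\ref{Lemma_ell_1_saturation} for $\alpha=\omega$.

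However, your base case $\alpha=0$ contains a genuine error. You propose taking $Y(0)$ to be a \emph{reflexive} $\ell_1$-saturated separable space, e.g.\ a Tsirelson-type space. No such infinite-dimensional space exists: every closed subspace of a reflexive space is reflexive, and $\ell_1$ is not reflexive, so an infinite-dimensional reflexive space contains no copy of $\ell_1$ at all --- the very objection you raise against $\ell_2$ applies verbatim to $T$ and $T^*$. You are implicitly conflating ``Baire order $0$'' with reflexivity; what order $0$ actually means is weak sequential completeness ($X^{**}_0=X^{**}_1$), and this is compatible with $\ell_1$-saturation. The correct (and paper's) choice is simply $Y(0)=\ell_1$: it has the Schur property, hence is weakly sequentially complete, hence $\ord_\bB(\ell_1)=0$, and it is trivially $\ell_1$-saturated. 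With that one replacement your argument is complete and coincides with the paper's.
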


\begin{proof} For $n\in\N\cup\{0\}$ we recursively construct Banach spaces $Y(n)$ with a basis and of Baire order $n$. For $n=0$ take $Y(0)=\ell_1$. Having defined $Y(n)$ with a basis for a fixed $n\in\N\cup\{0\}$, build $Y(n+1)$ by means of Theorem \ref{Theorem-Construction} applied to $X=Y(n)$.

For $\alpha=\omega$, pick, for $n \in \N$,  Banach spaces $Y(n)$ constructed above. Then $Y(\omega) = (\bigoplus Y(n))_{\ell_1}$ has the Baire order $\omega$ by Proposition \ref{Prop-Order-Baire1-Sum} and is $\ell_1$-saturated by Lemma \ref{Lemma_ell_1_saturation}.
\end{proof}
The next result answers in negative the question in Problem \ref{Problem-Universal}. 
\begin{theorem} \label{Theorem-MainUniversal}
    There is an $\ell_1$-saturated separable Banach space of the Baire order $\omega_1$.
\end{theorem}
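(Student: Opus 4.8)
The plan is to apply Theorem \ref{Theorem-Construction} to a carefully chosen starting space $X$ of Baire order $\omega_1$ that admits a basis, so that the resulting $Y$ is automatically $\ell_1$-saturated, separable, and of Baire order $1 + \omega_1 = \omega_1$. The natural candidate is $X = \mathcal{C}([0,1])$, which has a (Schauder) basis and, as recalled in the introduction, has $\ord_\bB(X) = \omega_1$ because $\mathcal{C}([0,1])^{**}_\alpha$ can be identified with $\bB_\alpha([0,1])$ and these form a strictly increasing chain. Feeding this $X$ into Theorem \ref{Theorem-Construction} produces an $\ell_1$-saturated space $Y$ with a boundedly complete basis satisfying $Y^{**}_{1+\alpha} \cong Y \oplus \mathcal{C}([0,1])^{**}_\alpha$ for all $\alpha \leqslant \omega_1$, and in particular $\ord_\bB(Y) = 1 + \ord_\bB(\mathcal{C}([0,1])) = 1 + \omega_1 = \omega_1$.

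The steps, in order, would be: first, invoke the fact that $\mathcal{C}([0,1])$ has a basis (e.g. the Faber--Schauder system) so that Theorem \ref{Theorem-Construction} applies. Second, quote the identification $\mathcal{C}([0,1])^{**}_\alpha \cong \bB_\alpha([0,1])$ and the descriptive-set-theoretic fact that these classes strictly increase, which gives $\ord_\bB(\mathcal{C}([0,1])) = \omega_1$ (this was already spelled out in example (iii) of the introduction). Third, apply Theorem \ref{Theorem-Construction} with this $X$ to obtain $Y$, which is separable (having a basis) and $\ell_1$-saturated, with $\ord_\bB(Y) = 1 + \omega_1 = \omega_1$. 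That is the entire construction.

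I do not expect any real obstacle here: the theorem is an essentially immediate corollary of Theorem \ref{Theorem-Construction}, the only minor point being to note that $1 + \omega_1 = \omega_1$ (ordinal arithmetic), so the slight mismatch between $1 + \alpha$ and $\alpha + 1$ that hampers the finite-order case in Problem \ref{Problem-NontrivialOrder} is harmless when $\alpha = \omega_1$. One should also remark, as the paper does right after Problem \ref{Problem-Universal}, that an $\ell_1$-saturated space cannot contain $\mathcal{C}([0,1])$ (which contains $c_0$, hence is not $\ell_1$-saturated), so $Y$ is not universal — this is the punchline that makes the example answer Problem \ref{Problem-Universal} in the negative, though strictly speaking it is not needed for the statement of Theorem \ref{Theorem-MainUniversal} itself.

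\begin{proof}
The space $X = \mathcal{C}([0,1])$ has a Schauder basis, so Theorem \ref{Theorem-Construction} applies and yields an $\ell_1$-saturated Banach space $Y$ with a boundedly complete basis; in particular $Y$ is separable. As noted in example (iii) of the introduction, for every $\alpha \leqslant \omega_1$ the class $\mathcal{C}([0,1])^{**}_\alpha$ can be identified (via the dominated convergence theorem) with the space $\bB_\alpha([0,1])$ of bounded Baire-$\alpha$ functions on $[0,1]$, and these classes form a strictly increasing chain, so $\ord_\bB(\mathcal{C}([0,1])) = \omega_1$. By the last assertion of Theorem \ref{Theorem-Construction},
\[
\ord_\bB(Y) = 1 + \ord_\bB(\mathcal{C}([0,1])) = 1 + \omega_1 = \omega_1,
\]
which completes the proof.
\end{proof}
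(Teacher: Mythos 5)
Your proposal is correct and follows exactly the paper's own proof: apply Theorem \ref{Theorem-Construction} with $X = \mathcal{C}([0,1])$, using that it has a Schauder basis and Baire order $\omega_1$, and that $1+\omega_1=\omega_1$. The additional remarks (the Faber--Schauder system, the identification with $\bB_\alpha([0,1])$, and the non-universality punchline) are accurate elaborations of what the paper leaves implicit.
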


\begin{proof}
Apply Theorem \ref{Theorem-Construction} with $X = \mathcal{C}([0,1])$. 
\end{proof}
Finally, we present some analogue of Lindenstrauss' theorem \cite{Lindenstrauss1971} in the Baire classes setting.

\begin{theorem}\label{Theorem-LindenstraussAnalogue}
    For every Banach space $X$ with a basis and $n\in\N$ there is a Banach space $Y$ with a boundedly complete basis such that
    \[Y_n^{**}\cong Y_{n-1}^{**}\oplus X.\]
More precisely, there is an isomorphic  embedding $Q:X\hookrightarrow Y^{**}_n$ such that $Y^{**}_n=Y_{n-1}^{**}\oplus Q(X)$.
\end{theorem}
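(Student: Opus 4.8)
The plan is to iterate Theorem \ref{Theorem-Construction} finitely many times, but starting from a space with a trivial Baire class structure so that the resulting gap between consecutive Baire classes lands exactly where we want it. Specifically, set $X_0 = X \oplus \ell_1$ (or work with $X$ directly after the first step), and apply Theorem \ref{Theorem-Construction} iteratively: if $Z$ is a space with a basis such that $Z_k^{**} \cong Z_{k-1}^{**} \oplus X$ is known at some stage, the continuity/splitting features of the construction should be leveraged to push this one level up. The cleanest route, however, is to run the construction of Section \ref{section_def_Y} not over $X_{AH}$ alone but to iterate: let $Y_1$ be the space produced by Theorem \ref{Theorem-Construction} applied to a weakly sequentially complete space like $\ell_1$ (so $(\ell_1)_\alpha^{**} = \ell_1$ for all $\alpha$), giving $(Y_1)_{1+\alpha}^{**} = Y_1 \oplus \ell_1$, i.e. a space of Baire order $1$. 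Then apply Theorem \ref{Theorem-Construction} repeatedly $n-1$ more times, at the last step inserting $X$ rather than $\ell_1$.

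Concretely, first I would define recursively $W_1 = Y$ from Theorem \ref{Theorem-Construction} with input $\ell_1$, and $W_{j+1} = Y$ from Theorem \ref{Theorem-Construction} with input $W_j$, for $j = 1, \dots, n-2$; these all have bases (in fact boundedly complete bases). Then set $Y = $ the space from Theorem \ref{Theorem-Construction} applied to $X' := W_{n-1} \oplus X$, where I give $W_{n-1} \oplus X$ a basis by interleaving. Unwinding the iterated isomorphisms $Y_{1+\alpha}^{**} \cong Y \oplus (X')_\alpha^{**}$ and tracking how each application shifts indices by $1$, one computes $Y_n^{**} \cong Y \oplus (\text{lower pieces}) \oplus X$ and $Y_{n-1}^{**} \cong Y \oplus (\text{the same lower pieces})$, so that $Y_n^{**} \cong Y_{n-1}^{**} \oplus X$. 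The embedding $Q$ is obtained by composing the embeddings $L$ from each application of Theorem \ref{Theorem-Construction}, restricted to the copy of $X$ sitting inside $(X')_1^{**}$ (via the canonical $X \hookrightarrow X' \hookrightarrow (X')^{**}$), and one checks $Q(X) \subset Y_n^{**}$ lands in the complement of $Y_{n-1}^{**}$ exactly because $X$ contributes nothing to $(X')_0^{**}, \dots$ at the level below.

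Actually, the more economical argument avoids iteration of the full construction and instead iterates only the \emph{statement}: prove by induction on $n$ that for every Banach space $X$ with a basis there is $Y$ with a boundedly complete basis and an embedding $Q\colon X \hookrightarrow Y_n^{**}$ with $Y_n^{**} = Y_{n-1}^{**} \oplus Q(X)$. For $n = 1$: apply Theorem \ref{Theorem-Construction} to $X$ itself, obtaining $Y$ with $Y_1^{**} = \iota(Y) \oplus L(X_0^{**}) = \iota(Y) \oplus L(X)$ and $Y_0^{**} = \iota(Y)$, so $Q = L|_X$ works and $Y_1^{**} = Y_0^{**} \oplus Q(X)$. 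For the inductive step, given the statement for $n$, let $\tilde Y$ witness it for $X$, and apply Theorem \ref{Theorem-Construction} to $\tilde Y$ (which has a basis), getting $Y$ with $Y_{1+\alpha}^{**} = \iota(Y) \oplus \tilde L(\tilde Y_\alpha^{**})$. Taking $\alpha = n$ and $\alpha = n-1$ and subtracting: $Y_{n+1}^{**} = \iota(Y) \oplus \tilde L(\tilde Y_n^{**}) = \iota(Y) \oplus \tilde L(\tilde Y_{n-1}^{**} \oplus Q(X))$ and $Y_n^{**} = \iota(Y) \oplus \tilde L(\tilde Y_{n-1}^{**})$, whence $Y_{n+1}^{**} = Y_n^{**} \oplus (\tilde L \circ Q)(X)$, using that $\tilde L$ is a (linear) isomorphic embedding respecting the internal direct sum $\tilde Y_{n-1}^{**} \oplus Q(X)$. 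Set $Q_{\text{new}} = \tilde L \circ Q$.

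The main obstacle I expect is purely bookkeeping: verifying that the direct-sum decomposition is honored under the embedding $L$ of Theorem \ref{Theorem-Construction}, i.e. that $L(\tilde Y_{n-1}^{**} \oplus Q(X)) = L(\tilde Y_{n-1}^{**}) \oplus L(Q(X))$ as an \emph{internal} topological direct sum inside $Y^{**}$ — this is immediate since $L$ is a linear isomorphic embedding onto its (complemented) image, so it carries complemented subspaces to complemented subspaces and respects finite direct sums. A secondary point is ensuring at each stage that the space fed into Theorem \ref{Theorem-Construction} genuinely has a basis; this is guaranteed because Theorem \ref{Theorem-Construction} outputs a space with a boundedly complete basis, and $\tilde Y$ in the induction is exactly such an output (with the base case $\tilde Y$ also having a basis by hypothesis on $X$ combined with the $n=1$ construction). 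No new analytic input beyond Theorem \ref{Theorem-Construction} is needed.
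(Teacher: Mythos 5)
Your final ``economical argument'' --- induction on $n$, with the base case given directly by Theorem \ref{Theorem-Construction} and the inductive step obtained by applying Theorem \ref{Theorem-Construction} to the witness space $\tilde Y$ and setting $Q_{\mathrm{new}} = \tilde L \circ Q$ --- is correct and is essentially identical to the paper's proof. The earlier sketches you discard are unnecessary; the only points needing care (that $\tilde Y$ has a basis, and that $\tilde L$ respects the internal direct sum) are exactly the ones you address.
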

\begin{proof} We proceed by induction. The case $n=1$ follows by Theorem \ref{Theorem-Construction}. Fix $n\in\N$ and assume  there is a Banach space $Z$  with a basis and an isomorphic embedding $\tilde Q:X\hookrightarrow Z_n^{**}$ with
\[Z_n^{**}=Z_{n-1}^{**}\oplus \tilde Q(X).\]
By Theorem \ref{Theorem-Construction} there is a Banach space $Y$ with a boundedly complete basis and an embedding $L:Z^{**}\hookrightarrow Y^{**}$ such that for every $m\in\N$,
\[Y_m^{**}=\iota(Y)\oplus L(Z_{m-1}^{**}).\]
In particular, applying the above for $m=n+1$ and $m=n$ we obtain
\begin{align*}
Y_{n+1}^{**}&=\iota(Y)\oplus L(Z_n^{**})=\iota(Y)\oplus L(Z_{n-1}^{**}\oplus \tilde Q(X))
=\iota(Y)\oplus L(Z_{n-1}^{**})\oplus L(\tilde Q(X))\\
&=Y_n^{**}\oplus L(\tilde Q(X)).
\end{align*}
Hence $Q:=L\circ \tilde Q:X\hookrightarrow Y_{n+1}^{**}$ is the desired map.
\end{proof}

\begin{remark} 
One can consider the construction of the space $Y$ in Section \ref{section_def_Y}  with the canonical basis of the Azimi-Hagler space replaced by the summing basis of the James space, which would require small modifications in proofs. However, we have chosen the prior due to the form of the functionals, handy in the definition of the norm on $Y$. Further, the choice of the Azimi-Hagler basis makes the resulting space $\ell_1$-saturated, while the choice of the summing basis of James space would make it $\{\ell_1,\ell_2\}$-saturated, in the sense that every closed infinite-dimensional subspace would contain an isomorphic copy of either $\ell_1$ or $\ell_2$. Other variants of the construction of $Y$ will appear in the future article (in preparation) setting the framework for Banach spaces of higher Baire order.   
\end{remark}

\thebibliography{10}

\bibitem{AlbiacEtAl2018} F. Albiac, J.~L. Ansorena and B. Wallis, Garling sequence spaces, J. Lond. Math. Soc. (2) {\bf 98} (2018), no.~1, 204--222.

\bibitem{AlbiacKalton} F. Albiac and N.~J. Kalton, {\it Topics in Banach space theory}, Graduate Texts in Mathematics, 233, Springer, New York, 2006.

\bibitem{Alfsen} E.~M. Alfsen, \textit{Compact convex sets and boundary integrals}, Ergebnisse der Mathematik
und ihrer Grenzgebiete, Band 57, Springer-Verlag, New York, 1971.

\bibitem{Andrew1987} A.~D. Andrew, On the Azimi-Hagler Banach spaces, Rocky Mountain J. Math. {\bf 17} (1987), no.~1, 49--53.

\bibitem{ArgyrosGodefroyRosenthal2003} S.~A. Argyros, G. Godefroy and H.~P. Rosenthal, Descriptive set theory and Banach spaces, in {\it Handbook of the geometry of Banach spaces, Vol.\ 2}, 1007--1069, 2003, North-Holland, Amsterdam.

\bibitem{Azimi2002} P. Azimi, A new class of Banach sequence spaces, Bull. Iranian Math. Soc. {\bf 28} (2002), no.~2, 57--68.

\bibitem{AzimiLedari2006} P. Azimi and A.~A. Ledari, On the classes of hereditarily $\ell_p$ Banach spaces, Czechoslovak Math. J. {\bf 56(131)} (2006), no.~3, 1001--1009.

\bibitem{AzimiHagler1986} P. Azimi and J.~N. Hagler, Examples of hereditarily $l^1$ Banach spaces failing the Schur property, Pacific J. Math. {\bf 122} (1986), no.~2, 287--297.

\bibitem{BellenotHaydonOdell89} S. F. Bellenot, R. Haydon and E. Odell, \textit{%
Quasi-reflexive and tree spaces constructed in the spirit of R. C. James},
in: Contemp. Math. \textbf{85} (1989), 19--43.

\bibitem{DavisEtAl1974} W.~J. Davis, T. Figiel, W.~B. Johnson and A. Pe{\l}czy\'nski, Factoring weakly compact operators, J. Funct. Anal. {\bf 17} (1974), 311--327.

\bibitem{FabianEtAl} M.~J. Fabian, P. Habala, P. H\'ajek, V. Montesinos and V. Zizler, {\it Banach space theory}, CMS Books in Mathematics/Ouvrages de Math\'ematiques de la SMC, Springer, New York, 2011.

\bibitem{Garling1968} D.~J.~H. Garling, Symmetric bases of locally convex spaces, Studia Math. {\bf 30} (1968), 163--181.

\bibitem{James1951} R.~C. James, A non-reflexive Banach space isometric with its second conjugate space, Proc. Nat. Acad. Sci. U.S.A. {\bf 37} (1951), 174--177.

\bibitem{James1960} R.~C. James, Separable conjugate spaces, Pacific J. Math. {\bf 10} (1960), 563--571.

\bibitem{Kechris} A. Kechris, \textit{Classical Descriptive Set Theory}, Graduate Texts in Mathematics, 156, Springer, New York, 1995.

\bibitem{LedariParvaneh2013} A.~A. Ledari and V. Parvaneh, On dual of Banach sequence spaces, Acta Math. Univ. Comenian. (N.S.) {\bf 82} (2013), no.~2, 159--164.

\bibitem{Lindenstrauss1971} J. Lindenstrauss, On James's paper ``Separable conjugate spaces'', Israel J. Math. {\bf 9} (1971), 279--284.

\bibitem{LudvikSpurny2014} P. Ludv\'ik and J. Spurn\'y, Baire classes of $L_1$-preduals and $C^*$-algebras, Illinois J. Math. {\bf 58} (2014), no.~1, 97--112.

\bibitem{OdellRosenthal1975} E.~W. Odell and H.~P. Rosenthal, A double-dual characterization of separable Banach spaces containing $l\sp{1}$, Israel J. Math. {\bf 20} (1975), no.~3-4, 375--384.

\bibitem{Ostrovskii2002} M.~I. Ostrovskii, Weak* sequential closures in Banach space theory and their applications, in {\it General topology in Banach spaces}, 21--34, Nova Sci. Publ., Huntington, NY.

\bibitem{Smith} R.~R. Smith, Borel structures on compact convex sets, J. London Math. Soc. (2) {\bf 16} (1977), no.~1, 99--111.

\bibitem{Spurny2010} J. Spurn\'y, Baire classes of Banach spaces and strongly affine functions, Trans. Amer. Math. Soc. {\bf 362} (2010), no.~3, 1659--1680.

\bibitem{Talagrand1984} M. Talagrand, A new type of affine Borel function, Math. Scand. {\bf 54} (1984), no.~2, 183--188.

\end{document}